\newcommand{\ds}{\displaystyle }
\newcommand{\vc}[1]{{\boldsymbol #1}}
\newcommand{\sr}[1]{{\mathcal #1}}
\newcommand{\dd}[1]{\mathbb{#1}}
\newcommand{\ol}{\overline}
\newcommand{\ul}{\underline}
\newcommand{\eq}[1]{(\ref{eq:#1})}
\newcommand{\mylem}[1]{Lemma~\ref{mylem:#1}}
\newcommand{\mycor}[1]{Corollary~\ref{mycor:#1}}
\newcommand{\mythr}[1]{Theorem~\ref{mythr:#1}}
\newcommand{\mypro}[1]{Proposition~\ref{mypro:#1}}
\newcommand{\myrem}[1]{Remark~\ref{myrem:#1}}
\newcommand{\myfig}[1]{Figure~\ref{myfig:#1}}
\newcommand{\myapp}[1]{Appendix~\ref{myapp:#1}}
\newcommand{\sectn}[1]{Section~\ref{mysec:#1}}
\newcommand{\mylemt}[1]{\ref{mylem:#1}}
\newcommand{\sect}[1]{\ref{mysec:#1}}
\newcommand{\pend}{\hfill \thicklines \framebox(6.6,6.6)[l]{}}
\newenvironment{proof*}[1]{\noindent {\sc  #1} \rm}{\pend}
\newtheorem{theorem}{Theorem}[section]
\newtheorem{lemma}{Lemma}[section]
\newtheorem{proposition}{Proposition}[section]
\newtheorem{remark}{Remark}[section]
\newtheorem{corollary}{Corollary}[section]
\newenvironment{mylist}[1]{\begin{list}{}
{\setlength{\itemindent}{#1mm}}
{\setlength{\itemsep}{0ex plus 0.2ex}}
{\setlength{\parsep}{0.5ex plus 0.2ex}}
{\setlength{\labelwidth}{10mm}}
}{\end{list}}
\newcommand{\setnewcounter} {
\setcounter{subsection}{0}
\setcounter{equation}{0}
\setcounter{figure}{0}
\setcounter{conjecture}{0}
\setcounter{assumption}{0}
\setcounter{question}{0}
\setcounter{definition}{0}
\setcounter{theorem}{0}
\setcounter{corollary}{0}
\setcounter{lemma}{0}
\setcounter{proposition}{0}
\setcounter{remark}{0}
}
\begin{document}

\title{\Large \bf Diffusion approximation of the stationary distribution of a two-level single server queue}


\author{Masakiyo Miyazawa\footnotemark}
\date{\today}

\maketitle

\begin{abstract}
We consider a single server queue which has a threshold to change its arrival process and service speed by its queue length, which is referred to as a two-level $GI/G/1$ queue. This model is motivated by an energy saving problem for a single server queue whose arrival process and service speed are controlled. To get its performance in tractable form, we study the limit of the stationary distribution of the queue length in this two-level queue under scaling in heavy traffic. Except for a special case, this limit corresponds to its diffusion approximation. It is shown that this limiting distribution is truncated exponential (or uniform if the drift is null) below the threshold level and exponential above it under suitably chosen system parameters and generally distributed inter-arrival times and workloads brought by customers. This result is proved under a mild limitation on arrival parameters using the so called BAR approach studied in \cite{BravDaiMiya2017,BravDaiMiya2024,Miya2017,Miya2024}. We also intuitively discuss about a diffusion process corresponding to the limit of the stationary distribution under scaling.
\end{abstract}



\section{Introduction}
\label{mysec:introduction}
\setnewcounter

We consider a single server queue which has a threshold to change its arrival process and service speed by its queue length. Namely, customers arrive subject to a renewal process as long as its queue length (including a customer being served) is less than threshold $\ell_{1}$, while it is changed to another renewal process otherwise. The service speed is changed similarly but in a slightly different way. That is, the service speed is constant when the queue length is not greater than the threshold, while it is another constant otherwise. We assume that customers are served in the FCFS manner, and the amounts of the work which are brought by them  are independent and identically distributed and independent of the arrival process. We refer to this model as a 2-level $GI/G/1$ queue.

This 2-level $GI/G/1$ queue is motivated by an energy saving problem to control the arrival process and service speed. Here, the system is an internet server, and its energy consumption depends on service speed. Then, the problem is to find an optimal threshold to minimize the total cost of the operating energy of the server and customer's waiting.

So far, we are interested in the performance of this 2-level $GI/G/1$ queue over long time, for example the mean queue length. However, it is hard to derive such a characteristic in an analytically tractable form in general, while, for the special case that all the distributions are exponential, its analytic expression is available, but the variability of the arrival times and service amounts are not well captured in this exponential case. Because of these facts, an approximation by a reflected diffusion is helpful, where a reflected diffusion means a one-dimensional diffusion reflected at the origin, in which the term ``diffusion'' is used for a one-dimensional continuous Markov process on the real line which has a finite and positive second moment at each time. 

So we study the diffusion approximation through the limit of the stationary distributions of the scaled queue length of this 2-level $GI/G/1$ queue in heavy traffic, where the scaling factor and threshold level are chosen so that the limit is well obtained as a probability distribution. It is expected that this limiting distribution corresponds to the stationary distribution of a reflected diffusion whose drift and variance are discontinuously changed at the level corresponding to the threshold (see \sectn{process-limit}). The stationary distribution of this reflected diffusion is recently studied in \cite{Miya2024b,Miya2024d}. Thus, we will indirectly consider a diffusion approximation for the 2-level $GI/G/1$ queue.

There is another motivation of this study. In general, it is hard to theoretically study diffusion approximation for state-dependent queues. For example, there are some theoretical studies about them, but they assume that the arrival and service processes are of exponential types, and their rates are smoothly changed (e.g., see \cite{MandPats1998,Yama1995}), while the arrival process and service speed discontinuously change in the present model. Thus, theoretical study for its diffusion approximation is challenging.

As is well known, the scaling limit of the stationary distribution is exponential for the standard $GI/G/1$ queue in heavy traffic (e.g., see \cite{King1961a}). Furthermore, \cite{Miya2024} shows that the limiting distribution is truncated exponential (or uniform for the null drift case) for the $GI/G/1$ queue with a finite waiting room. Both distributions agree with the stationary distributions of the corresponding reflected diffusions (see \cite{Harr2013}). Thus, one may conjecture that the limiting distribution for the 2-level $GI/G/1$ queue is truncated exponential or uniform below the threshold level and exponential above it. This conjecture can be directly verified if all the distributions are exponential (see \mypro{2LQ-exponential}), but this is exceptional. Under the generally distributed assumptions, we verify it, employing the BAR approach studied in \cite{BravDaiMiya2017,BravDaiMiya2024,Miya2017,Miya2024}. This verification, \mythr{2LQ-main}, requires a mild limitation on the parameters of the arrival process (see (\sect{sequence}.f) in \sectn{assumption}). Its proof needs some knowledge about Palm distributions and a truncation technique in computations, but does not require any deeper theory. However, the proof is complicated, so we split it into several components so that they are orthogonal as long as possible. This is the reason why we spend two sections for the proof.

In this paper, we only study the limit of the scaled stationary distribution, but, as our results are related to the diffusion approximation,  it is interesting to identify a reflected diffusion as a process limit for the 2-level $GI/G/1$ queue. This is a hard question because this diffusion discontinuously changes its drift and variance at the fixed state. We intuitively discuss what can be expected about this process limit in \sectn{process-limit}.

This paper is made up by six sections. We first introduce a general framework for a Markov process describing the 2-level $GI/G/1$ queue in \sectn{framework}, then provide notations and assumptions for a sequence of the 2-level $GI/G/1$ queues in heavy traffic in \sectn{assumption}. A main result, \mythr{2LQ-main}, and the exponentially distributed case, \mypro{2LQ-exponential}, are presented in \sectn{main}. In \sectn{2LQ-BAR}, we introduce the so called BAR approach, and apply it to the 2-level $GI/G/1$ queue. Using these results, \mythr{2LQ-main} is proved in \sectn{proof-main}. Finally, in \sectn{concluding}, we intuitively discuss two topics. In \sectn{process-limit}, we consider a process-limit for the 2-level $GI/G/1$ queue in heavy traffic. In \sectn{intuitive}, we intuitively verify the conjecture in \myrem{2LQ-main}.

\section{Framework for the 2-level $GI/G/1$ queue}
\label{mysec:framework}
\setnewcounter

We take the stochastic framework introduced in \cite{Miya2024} to study the 2-level $GI/G/1$ queue. Let $L(t)$, $R_{e}(t)$ and $R_{s}(t)$ be the queue length, the remaining arrive time and the remaining workload of a customer being served, respectively, at time $t$. Note that the remaining service time cannot be used for $R_{s}(t)$ here because the service speed may change in future. Define
\begin{align*}
  X(\cdot) = \{(L(t),R_{e}(t),R_{s}(t)); t \ge 0\},
\end{align*}
Then, it is easy to see that $X(\cdot)$ is a Markov process with state space $\dd{Z}_{+} \times \dd{R}_{+}^{2}$, and the 2-level $GI/G/1$ queue is described by $X(\cdot)$, where $\dd{Z}_{+}$ is the set of nonnegative integers and $\dd{R}_{+}=[0,\infty)$. It can be assumed that $X(t)$ is right-continuous and has left-limits at any point of time. Let $N_{e}(\cdot) \equiv \{N_{e}(t); t \ge 0\}$ and $N_{s}(\cdot) \equiv \{N_{s}(t); t \ge 0\}$ be the counting processes generated by the vanishing instants of $R_{e}(t)$ and $R_{s}(t)$, respectively. Note that those counting processes can be viewed as point processes which are integer-valued random measures on $(\dd{R}_{+},\sr{B}(\dd{R}_{+}))$ whose atoms are the counting instants of $N_{e}(\cdot)$ and $N_{s}(\cdot)$, where $\sr{B}(\dd{R}_{+})$ is the Borel field on $\dd{R}_{+}$. We denote those point processes by $N_{e}$ and $N_{s}$ respectively.

According to \cite{Miya2024}, we take stochastic basis $(\Omega,\sr{F},\dd{F},\dd{P})$ with time-shift operator semi-group $\Theta_{\bullet} \equiv \{\Theta_{t}; t \ge 0\}$ to jointly consider the Markov process $X(\cdot)$ and the counting processes $N_{e}(\cdot)$ and $N_{s}(\cdot)$, where $\dd{F} \equiv \{\sr{F}_{t}; t \ge 0\}$ is a filtration generated by $X(\cdot)$, that is, $\sr{F}_{t} = \sigma(\{X(u); 0 \le u \le t\})$, and $\Theta_{\bullet}$ is a operator semi-group (namely, $\Theta_{s+t} = \Theta_{s} \circ \Theta_{t}$ for $s,t \ge 0$) on $\Omega$ satisfying the following two conditions.
\begin{align*}
 \mbox{(\sect{framework}.a) Measurability:} \quad & A \in \sr{F}_{t} \; \mbox{ implies } \; \Theta_{s}^{-1} A \in \sr{F}_{s+t}, \qquad s,t \ge 0,\\
 \mbox{(\sect{framework}.b) Consistency:} \hspace{4ex} & X(t) \circ \Theta_{s}(\omega) = X(s+t)(\omega), \qquad s,t \ge 0, \omega \in \Omega,\\
 & N(t) \circ \Theta_{s}(\omega) = N(s+t)(\omega) - N(s)(\omega), \quad s,t \ge 0, \omega \in \Omega,\\
 & \mbox{for $N(\cdot) = N_{e}(\cdot)$ and $N(\cdot) = N_{s}(\cdot)$},
\end{align*}
where $W \circ \Theta_{s}(\omega) = W (\Theta_{s}(\omega))$ for random variable $W$, and $\Theta_{s}^{-1} A = \{\omega \in \Omega; \Theta_{s}(\omega) \in A\}$.

\section{A sequence of the 2-level $GI/G/1$ queues}
\label{mysec:sequence}
\setnewcounter

We consider the scaling limit of the stationary distribution of the 2-level $GI/G/1$ queue in heavy traffic. To this end, we introduce a sequence of this queueing models indexed by $r \in (0,1]$. Since this sequence can be countable, one may take $r=1/\sqrt{n}$ for positive integer $n$ so that the scaling is identical with the standard diffusion approximation, but we keep $r$ just for convenience. Let $a \wedge b = \min(a,b)$ and $a \vee b = \max(a,b)$ for $a,b \in \dd{R}$, which will be used throughout the paper.

\subsection{Assumptions}
\label{mysec:assumption}

For the $r$-th model, let $\ell^{(r)}_{1} \ge 1$ be the threshold level, and let $L^{(r)}(t)$ be the number of customers in the system at time $t$. We refer to this number as the queue length. Let $t^{(r)}_{e,n}$ be the $n$-th arrival time of a customer, and let $T^{(r)}_{s}(n)$ be the workload of the $n$-th arriving customer. We assume that $T_{e}^{(r)}(n) \equiv t^{(r)}_{e,n} - t^{(r)}_{e,n-1}$ is independent of $\{t^{(r)}_{e,n'}; 0 \le n' \le n-1\}$ and everything else for all $n \ge 1$, where $t^{(r)}_{e,0} = 0$. Thus, the inter-arrival times are independent, but their distributions depend on the queue length. Namely, $T_{e}^{(r)}(n)$ has the same distribution as $T^{(r)}_{e,1}$ if $L(t^{(r)}_{e,n-1}-) < \ell^{(r)}_{1}$ while its distribution agrees with that of $T_{e,2}^{(r)}$ if $L(t^{(r)}_{e,n-1}-) \ge \ell^{(r)}_{1}$, where $T^{(r)}_{e,1}$ and $T^{(r)}_{e,2}$ are independent random variables which may have different distributions. As for service, let $T^{(r)}_{s}(n)$ be the workload of the $n$-th arriving customers. We assume that $\{T^{(r)}_{s}(n); n \ge 1\}$ is a sequence of $i.i.d.$ random variables. We denote a random variable subject to the common distribution of $T^{(r)}_{s}(n)$ by $T^{(r)}_{s}$. Let $c^{(r)}_{1}$ and $c^{(r)}_{2}$ be the service speeds when the queue length is not greater than $\ell^{(r)}_{1}$ and otherwise, respectively. Namely, as long as $R^{(r)}_{s}(t)$ is differentiable at time $t$,
\begin{align*}
  \frac {d}{dt} R^{(r)}_{s}(t) = -c^{(r)}_{1} 1(1 \le L^{(r)}(t) \le \ell^{(r)}_{1}) -c^{(r)}_{2} 1(L^{(r)}(t) > \ell^{(r)}_{1}),  
\end{align*}
where $1(\cdot)$ is the indicator function of the statement ``$\cdot$''. These dynamics is depicted in the sample path of $L^{(r)}(t)$ in \myfig{2LQ-1}. 
\begin{figure}[h] 
   \centering
   \includegraphics[height=4.90cm]{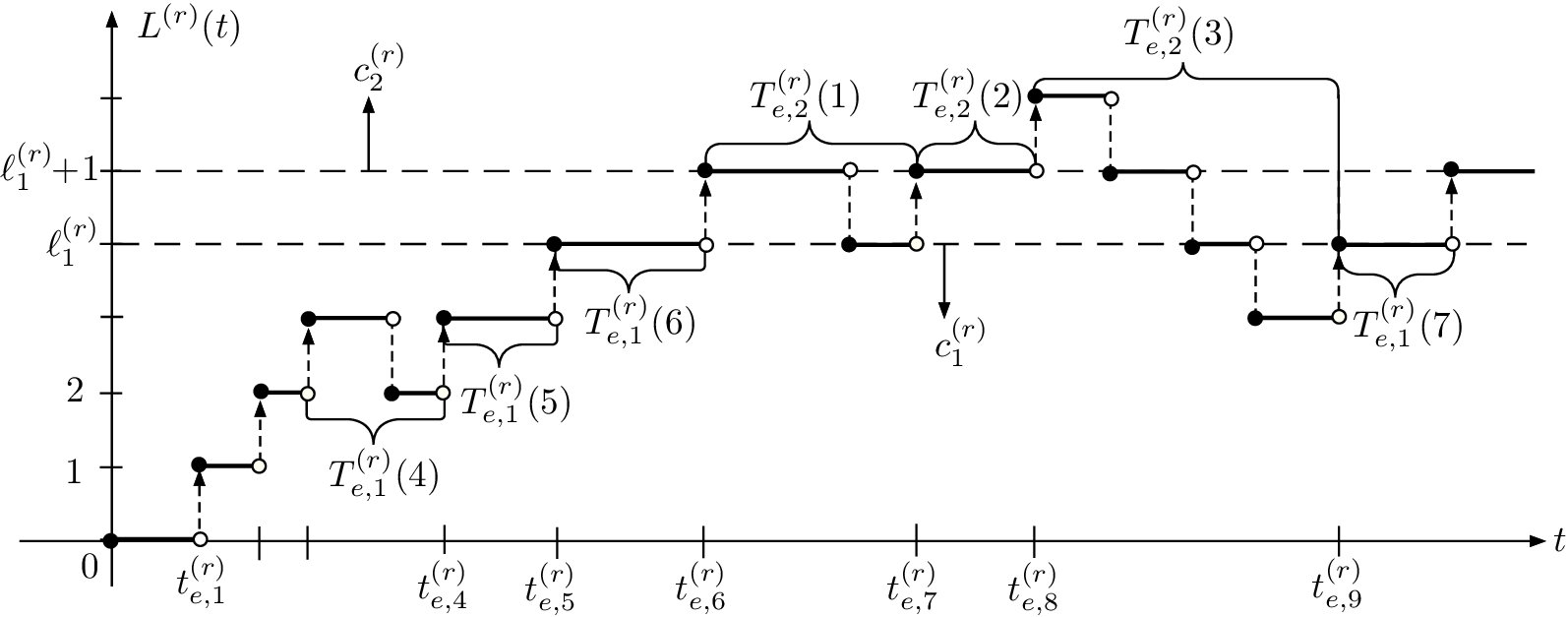} 
   \caption{Sample path of the queue length process of the $r$-th model}
   \label{myfig:2LQ-1}
\end{figure}

We will use the following notations.
\begin{align*}
 & m^{(r)}_{e,i} = \dd{E}(T^{(r)}_{e,i}), \qquad \lambda^{(r)}_{i} = 1/m^{(r)}_{e,i}, \qquad \sigma^{(r)}_{e,i} = (\dd{E}[(T^{(r)}_{e,i} - m^{(r)}_{e,i})^{2}]^{\frac 12}, \qquad i=1,2,\\
 & m^{(r)}_{s} = \dd{E}(T^{(r)}_{s}), \qquad \mu^{(r)} = 1/m^{(r)}_{s}, \qquad \sigma^{(r)}_{s} = (\dd{E}[(T^{(r)}_{s} - m^{(r)}_{s})^{2}])^{\frac 12},\\
 & \rho^{(r)}_{i} = \lambda^{(r)}_{i}/(c^{(r)}_{i}\mu^{(r)}), \qquad i = 1,2.
\end{align*}
For the $r$-th model, let $R^{(r)}_{e}(t)$ and $R^{(r)}_{s}(t)$ be the remaining arrival time and the remaining workload of a customer being served, respectively, at time $t$. Define $X^{(r)}(\cdot) \equiv \{X^{(r)}(t); t \ge 0\}$ by
\begin{align*}
  X^{(r)}(t) = (L^{(r)}(t), R^{(r)}_{e}(t), R^{(r)}_{s}(t)),
\end{align*}
and denote the counting processes of the arrivals and service completions by $N^{(r)}_{e}(\cdot)$ and $N^{(r)}_{s}(\cdot)$, respectively. Note that $X^{(r)}(\cdot)$ is a piecewise deterministic Markov process, so it is strong Markov (e.g., see \cite{Davi1993}). Obviously, $X^{(r)}(\cdot)$, $N^{(r)}_{e}(\cdot)$ and $N^{(r)}_{s}(\cdot)$ are consistent with the time-shift operator $\Theta_{\bullet}$. We assume the following assumptions.
\begin{mylist}{3}
\item [(\sect{sequence}.a)] $\{\dd{E}[(T^{(r)}_{e,i})^{3}]; r \in (0,1]\}$ for $i=1,2$ and $\{\dd{E}[(T^{(r)}_{s})^{3}]; r \in (0,1]\}$ are bounded.
\item [(\sect{sequence}.b)] For $i=1,2$, $m^{(r)}_{e,i}, \lambda^{(r)}_{i}, \sigma^{(r)}_{e,i}, c^{(r)}_{i}$ and $m^{(r)}_{s}, \mu^{(r)}, \sigma^{(r)}_{s}$ are finite and positive, and converge to $m_{e,i}, \lambda_{i}, \sigma_{e,i}, c_{i} > 0$ and $m_{s}, \mu, \sigma_{s} > 0$, respectively, as $r \downarrow 0$.
\item [(\sect{sequence}.c)] There are constants $b_{1} \in \dd{R}$ and $b_{2} > 0$ such that $c^{(r)}_{i} \mu^{(r)} - \lambda^{(r)}_{i} = r \mu^{(r)} b_{i} + o(r)$ as $r \downarrow 0$.

\item [(\sect{sequence}.d)] There is a constant $\ell_{1}$ such that $r \ell^{(r)}_{1} = \ell_{1}+ o(r)$ as $r \downarrow 0$.
\item [(\sect{sequence}.e)] $\rho^{(r)}_{2} < 1$.
\end{mylist}
Here, for functions $f, g: (0,1] \to (0,\infty)$, we write $f(r) = o(g(r))$ as $r \downarrow 0$ if \linebreak $\lim_{r \downarrow 0} f(r)/g(r) = 0$. Similarly, $f(r) = O(g(r))$ if $\limsup_{r \downarrow 0} f(r)/g(r) < \infty$. Note that (\sect{sequence}.a) implies that
\begin{mylist}{3}
\item [(\sect{sequence}.a')] $\{(T^{(r)}_{e,i})^{2}; r \in (0,1]\}$ for $i=1,2$ and $\{(T^{(r)}_{s})^{2}; r \in (0,1]\}$ are uniformly integrable.
\end{mylist}
Furthermore, define $\rho_{i} = \lambda_{i}/(c_{i}\mu)$, then (\sect{sequence}.b) and (\sect{sequence}.c) imply that $1 - \rho^{(r)}_{i} = r b_{i}/c_{i} + o(r)$, $\rho_{i} = 1$ and $\lambda_{i} = c_{i} \mu$. Hence, $\lambda_{1} = \lambda_{2}$ if and only if $c_{1} = c_{2}$.

Most of our arguments are valid under the assumptions (\sect{sequence}.a)--(\sect{sequence}.e), but we require the following condition for their final step to determine the limiting distribution of the scaled queue length in heavy traffic.
\begin{mylist}{3}
\item [(\sect{sequence}.f)] $\sigma_{e,1}^{2} = \sigma_{e,2}^{2}$ if $\lambda_{1} = \lambda_{2}$.
\end{mylist}

\begin{remark}\rm
\label{myrem:3f-2}
This is a mild limitation because no condition is needed for $\lambda_{1} \not= \lambda_{2}$ and it is automatically satisfied when $T_{e,1}$ and $T_{e,2}$ are identically distributed. This condition is likely not needed but hard to remove. See \myrem{2LQ-3f} for its details.
\pend
\end{remark}

We take countably many $r$'s for a limiting operation. For such a sequence, there exists a stochastic basis $(\Omega,\sr{F},\dd{F},\dd{P})$ with time-shift operator $\Theta_{\bullet}$ on which $X^{(r)}(\cdot)$ is a stationary process. In what follows, $\dd{P}$ is always this probability measure. By the condition (\sect{sequence}.e), the Markov process $X^{(r)}(\cdot)$ has the stationary distribution. Let $X^{(r)} \equiv (L^{(r)},R^{(r)}_{e},R^{(r)}_{s})$ be a random vector subject to the stationary distribution of $X^{(r)}(t)$. For convenience, let $R^{(r)} = (R^{(r)}_{e}, R^{(r)}_{s})$. For $i=1,2$, let $N^{(r)}_{e,i}$ be the stationary point process obtained from the renewal process whose inter-counting times have the same distribution as that of $T^{(r)}_{e,i}$. Similarly, let $N^{(r)}_{s}$ be the stationary point process for the service completion instants.

Define $\alpha^{(r)}_{e} = \dd{E}[N^{(r)}_{e}(1)]$ and $\alpha^{(r)}_{s} = \dd{E}[N^{(r)}_{s}(1)]$. At present, we cannot compute $\alpha^{(r)}_{e}$ and $\alpha^{(r)}_{s}$, but the following lemma is intuitively clear because $X^{(r)}(\cdot)$ is a stationary process by (\sect{sequence}.e). Since this lemma is a key for our analysis, we formally prove it in \myapp{alpha 1}.
\begin{lemma}
\label{mylem:alpha 1}
Assume the assumptions (\sect{sequence}.b) and (\sect{sequence}.e), then $\alpha^{(r)}_{e}$ is finite and positive, and $\alpha^{(r)}_{s} = \alpha^{(r)}_{e}$.
\end{lemma}

\subsection{Main results}
\label{mysec:main}

We now present a main result, which will be proved in \sectn{proof-main}. Recall that $L^{(r)}$ is a random variable subject to the stationary distribution of the $r$-th 2-level $GI/G/1$ queue. Define probability distribution $\nu^{(r)}$ under the assumptions (\sect{sequence}.a)--(\sect{sequence}.e) as
\begin{align*}
  \nu^{(r)}(B) = \dd{P}(rL^{(r)} \in B), \qquad B \in \sr{B}(\dd{R}_{+}).
\end{align*}
We refer to this $\nu^{(r)}$ as a scaled stationary distribution. Define finite measures $\nu^{(r)}_{i}$ and conditional probability distributions $\nu^{(r)}_{i|cd}$ on $(\dd{R}_{+}, \sr{B}(\dd{R}_{+}))$ for $i=1,2$ as
\begin{align*}
 & \nu^{(r)}_{1}(B) = \dd{P}(rL^{(r)} \in B; L^{(r)} \le \ell^{(r)}_{1}), \qquad \nu^{(r)}_{2}(B) = \dd{P}(rL^{(r)} \in B; \ell^{(r)}_{1} < L^{(r)}),\\
 & \nu^{(r)}_{1|cd}(B) = \dd{P}(rL^{(r)} \in B| L^{(r)} \le \ell^{(r)}_{1}), \qquad \nu^{(r)}_{2|cd}(B) = \dd{P}(rL^{(r)} \in B| \ell^{(r)}_{1} < L^{(r)}),
 \end{align*}
for $B \in \sr{B}(\dd{R}_{+})$. Obviously, $\nu^{(r)}(B) = \nu^{(r)}_{1}(B) + \nu^{(r)}_{2}(B)$.

\begin{theorem}
\label{mythr:2LQ-main}
Assume the assumptions (\sect{sequence}.a)--(\sect{sequence}.f). Let
\begin{align}
\label{eq:2LQ-beta 1}
  \sigma_{i}^{2} = \lambda_{i}^{2} \sigma_{e,i}^{2} + \mu^{2} \sigma_{s}^{2}, \qquad \beta_{i} = \frac {2 b_{i}} {c_{i} \sigma_{i}^{2}}, \qquad i =1, 2.
\end{align}
For each $r \in (0,1]$, define $\widetilde{\nu}_{i|cd}$ as the probability measures on $\sr{B}(\dd{R}_{+})$ which have the density functions $h_{i|cd}$ for $i=1,2$ such that
\begin{align}
\label{eq:2LQ-h 1}
  & h_{1|cd}(x) = 1(0 \le x \le \ell_{1}) \times \left\{
\begin{array}{ll}
  \ds 1/\ell_{1}, & b_{1} = 0, \\
 \ds \frac {e^{\beta_{1} \ell_{1}} \beta_{1}}{e^{\beta_{1} \ell_{1}} -1}  e^{-\beta_{1} x} , \quad & b_{1} \not= 0,
\end{array}
\right.\\
\label{eq:2LQ-h 2}
  & h_{2|cd}(x) = 1 (x > \ell_{1}) \beta_{2} e^{-\beta_{2} (x-\ell_{1})}.
\end{align}
\begin{itemize}
\item [(i)] For each $i=1,2$, if $\nu^{(r)}_{i}(\dd{R}_{+})$ is bounded away from $0$ as $r \downarrow 0$, that is,
\begin{align}
\label{eq:2LQ-phi-bound 1}
  \liminf_{r \downarrow 0} \varphi^{(r)}_{i}(0) > 0, \qquad i=1,2,
\end{align}
then $\nu^{(r)}_{i|cd}$ weakly converges to $\widetilde{\nu}_{i|cd}$, namely,
\begin{align}
\label{eq:2LQ-cd-limit 1}
  \lim_{r \downarrow 0} \nu_{i|cd}^{(r)}(B) = \widetilde{\nu}_{i|cd}(B), \qquad B \in \sr{B}(\dd{R}_{+}).
\end{align}

\item [(ii)] Define probability measure $\widetilde{\nu}$ on $(\dd{R}_{+},\sr{B}(\dd{R}_{+}))$ and constant $\widetilde{\alpha}_{e}$ as
\begin{align}
\label{eq:2LQ-nu-tilde}
 & \widetilde{\nu}(B) = \widetilde{\nu}_{1|cd}(B) \widetilde{A}_{1} + \widetilde{\nu}_{2|cd}(B) \widetilde{A}_{2}, \qquad B \in \sr{B}(\dd{R}_{+}),\\
\label{eq:2LQ-alpha-t-1}
 & \widetilde{\alpha}_{e} = \lambda_{1} \widetilde{A}_{1} + \lambda_{2} \widetilde{A}_{2},
\end{align}
where $\widetilde{A}_{i}$ for $i=1,2$ are given by
\begin{align}
\label{eq:2LQ-A 1}
  & \widetilde{A}_{1} = \begin{cases}
\ds \frac {2b_{2} \ell_{1}} {c_{2} \sigma_{1}^{2} + 2b_{2} \ell_{1}}, & b_{1}=0,\\
\ds \frac {c_{1}b_{2} (e^{\beta_{1} \ell_{1}} - 1)} {c_{2}b_{1} + c_{1}b_{2} (e^{\beta_{1} \ell_{1}} - 1)}, \; & b_{1} \not=0,
\end{cases}
 \qquad \widetilde{A}_{2} = 1 - \widetilde{A}_{1}.
\end{align}
If either $\lambda^{(r)}_{1} - \lambda^{(r)}_{2} = O(r)$, equivalently, $c^{(r)}_{1} - c^{(r)}_{2} = O(r)$, or $T^{(r)}_{s}$ is exponentially distributed for all $r \in (0,1]$, then
\begin{align}
\label{eq:2LQ-nu-limit 1}
 & \lim_{r \downarrow 0} \nu^{(r)}(B) = \widetilde{\nu}(B), \qquad B \in \sr{B}(\dd{R}_{+}),\\
\label{eq:2LQ-alpha-limit 1}
 & \lim_{r \downarrow 0} \alpha^{(r)}_{e} = \widetilde{\alpha}_{e}.
\end{align}

\item [(iii)] If $\lambda_{1} > \lambda_{2}$ ($\lambda_{1} < \lambda_{2}$), then $\nu^{(r)}_{1|cd}$ ($\nu^{(r)}_{2|cd}$) weakly converges to $\widetilde{\nu}_{1|cd}$ ($\widetilde{\nu}_{2|cd}$, respectively), as $r \downarrow 0$.

\end{itemize}
\end{theorem}

\begin{remark}
\label{myrem:2LQ-main}
Note that, under one of the conditions of (ii) of this theorem, we have
\begin{align}
\label{eq:2LQ-Ai}
   \lim_{r \downarrow 0} \nu^{(r)}_{i}(\dd{R}_{+}) = \widetilde{A}_{i}, \qquad i=1,2.
\end{align}
Conversely, if \eq{2LQ-Ai} holds, then we have the weak convergence \eq{2LQ-nu-limit 1} from (i) because $\widetilde{A}_{i} > 0$. Thus, \eq{2LQ-Ai} is equivalent to \eq{2LQ-nu-limit 1}. We conjecture that \eq{2LQ-nu-limit 1} holds without any condition in (ii). This is supported by the intuitive verification of \eq{2LQ-Ai}, which is detailed in \sectn{intuitive}.
\pend
\end{remark}

Let $S^{(r)}(n)$ be the sojourn time of the $n$-th arriving customer in service, then the traffic intensity $\rho^{(r)}$ can be defined under the stationary setting as
\begin{align}
\label{eq:2LQ-traffic 1}
  \rho^{(r)} = \alpha^{(r)}_{e} \lim_{n \to \infty} \frac 1n \sum_{k=1}^{n} S^{(r)}(k).
\end{align}
Hence, by Little's law, we have $\rho^{(r)} = \dd{P}(L^{(r)} > 0) = 1 - \dd{P}(L^{(r)}=0)$.
In diffusion applications of a single server queue, the mean queue length is estimated through $(1 - \rho^{(r)}) \dd{E}[L^{(r)}]$ as $r \downarrow 0$. This limit is computed in the following corollary, which is proved in \myapp{2LQ-mean-Q}.

\begin{corollary}
\label{mycor:2LQ-mean-Q}
Under the assumptions (\sect{sequence}.a)--(\sect{sequence}.f) and either one of the assumptions in (ii) of \mythr{2LQ-main}, 
\begin{align}
\label{eq:2LQ-mean-L 1}
 & \lim_{r \downarrow 0} (1 - \rho^{(r)}) \dd{E}[L^{(r)}] = \lim_{r \downarrow 0} \dd{P}(L^{(r)}=0) \dd{E}[L^{(r)}] \nonumber\\
  & \quad = \begin{cases}
 \ds \frac {\sigma_{1}^{2} (2b_{2} \ell_{1}(b_{2} \ell_{1} + c_{2} \sigma_{1}^{2}) + c_{2}^{2} \sigma_{1}^{2} \sigma_{2}^{2})} {2(2b_{2} \ell_{1} + c_{2} \sigma_{1}^{2})^{2}}, & b_{1}=0, \vspace{1ex}\\
 \ds \frac {c_{1}^{2} b_{2}^{2} \sigma_{1}^{2} (e^{\beta_{1} \ell_{1}} - 1) + 2 b_{1} b_{2} \ell_{1} (c_{2} b_{1} - c_{1} b_{2}) + c_{2}^{2} b_{1}^{2} \sigma_{2}^{2}} {2 b_{1} b_{2} (c_{2}b_{1} + c_{1}b_{2} (e^{\beta_{1} \ell_{1}} - 1))}, \; & b_{1} \not= 0.
\end{cases}
\end{align}
\end{corollary}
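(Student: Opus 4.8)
The plan is to use the identity $1-\rho^{(r)} = \dd{P}(L^{(r)}=0)$ noted after \eq{2LQ-traffic 1} to write $(1-\rho^{(r)})\dd{E}[L^{(r)}] = \dd{P}(L^{(r)}=0)\,\dd{E}[L^{(r)}] = \big(r^{-1}\dd{P}(L^{(r)}=0)\big)\cdot\dd{E}[rL^{(r)}]$, and then to compute $\lim_{r\downarrow0}\dd{E}[rL^{(r)}]$ and $\lim_{r\downarrow0} r^{-1}\dd{P}(L^{(r)}=0)$ separately before multiplying and simplifying.

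The first limit is the easy one. By (ii) of \mythr{2LQ-main}, $rL^{(r)}$ converges weakly to $\widetilde{\nu}$; combining this with uniform integrability of $\{rL^{(r)}; r\in(0,1]\}$ — which follows from a uniform second moment bound $\sup_{r}\dd{E}[(rL^{(r)})^{2}]<\infty$, a by-product of the tightness step in the BAR proof of \mythr{2LQ-main} — gives $\dd{E}[rL^{(r)}] \to \int_{0}^{\infty} x\,\widetilde{\nu}(dx) = \widetilde{A}_{1}\int_{0}^{\ell_{0}} x\,h_{1|cd}(x)\,dx + \widetilde{A}_{2}\int_{\ell_{0}}^{\infty} x\,h_{2|cd}(x)\,dx$. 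The first integral is $\ell_{0}/2$ when $b_{1}=0$ and $\beta_{1}^{-1} - \ell_{0}/(e^{\beta_{1}\ell_{0}}-1)$ when $b_{1}\neq0$; the second is $\ell_{0}+\beta_{2}^{-1}$.

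The second limit carries the real content. Here I would use a stationary rate-balance identity — e.g. work conservation, $\alpha^{(r)}_{e}/\mu^{(r)} = c^{(r)}_{1}\dd{P}(1\le L^{(r)}\le\ell^{(r)}_{0}) + c^{(r)}_{2}\dd{P}(L^{(r)}>\ell^{(r)}_{0})$, equivalently the BAR identity obtained from a linear test function of the queue length — to express $\dd{P}(L^{(r)}=0)=1-\rho^{(r)}$ through $\alpha^{(r)}_{e}$, the service speeds $c^{(r)}_{i}$ and the region-occupation probabilities $\nu^{(r)}_{i}(\dd{R}_{+})$. Substituting the scaling $c^{(r)}_{i}\mu^{(r)} - \lambda^{(r)}_{i} = r\mu^{(r)} b_{i} + o(r)$ from (\sect{sequence}.c) and the convergences $\alpha^{(r)}_{e}\to\widetilde{\alpha}_{e}$, $\nu^{(r)}_{i}(\dd{R}_{+})\to\widetilde{A}_{i}$ from \mythr{2LQ-main}, the $O(1)$ terms cancel (this is the statement $\rho_{i}=1$), and, after bookkeeping the $O(r)$ terms, $r^{-1}\dd{P}(L^{(r)}=0)$ is seen to converge to an explicit expression built from $\widetilde{A}_{1},\widetilde{A}_{2},b_{i},c_{i}$ together with the order-$r$ asymptotics of $\dd{P}(L^{(r)}=\ell^{(r)}_{0})$ (and of $\alpha^{(r)}_{e}$) near the threshold; those boundary-layer asymptotics I would read off from the fine estimates near level $\ell^{(r)}_{0}$ in the proof of \mythr{2LQ-main}, under the same case hypotheses used there in part (ii).

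The remaining step is to multiply the two limits and collapse the product to the closed forms in \eq{2LQ-mean-L 1}, treating $b_{1}=0$ and $b_{1}\neq0$ separately throughout — these being the two branches of $h_{1|cd}$ and of $\widetilde{A}_{1}$ — and using $\beta_{i}=2b_{i}/(c_{i}\sigma_{i}^{2})$ to rewrite the mean expressions. The main obstacle is the determination of the boundary-layer constants $\lim_{r\downarrow0} r^{-1}\dd{P}(L^{(r)}=0)$ and $\lim_{r\downarrow0} r^{-1}\dd{P}(L^{(r)}=\ell^{(r)}_{0})$: these cannot be read off from the weak limit $\widetilde{\nu}$ — the lattice mass at a single level is not its $\widetilde{\nu}$-density times the spacing $r$, as the classical single-server case ($\dd{P}(L^{(r)}=0)=1-\rho^{(r)}$ versus the exponential density at $0$) already shows — so they require order-$r$ accuracy near the special levels $0$ and $\ell^{(r)}_{0}$, beyond the leading-order statement of \mythr{2LQ-main}.
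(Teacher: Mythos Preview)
Your high-level structure is exactly the paper's: factor as $\big(r^{-1}\dd{P}(L^{(r)}=0)\big)\cdot\dd{E}[rL^{(r)}]$, compute the second factor as the mean of $\widetilde{\nu}$ (with the same integrals $\ell_{0}/2$ or $\beta_{1}^{-1}-\ell_{0}/(e^{\beta_{1}\ell_{0}}-1)$ below, and $\ell_{0}+\beta_{2}^{-1}$ above), then compute the first factor from an order-$r$ identity and multiply. The difference is in that last step, and your proposed route is more circuitous than necessary.

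The paper does not go through work conservation and order-$r$ asymptotics of $\alpha^{(r)}_{e}$, nor does it need $r^{-1}\dd{P}(L^{(r)}=\ell^{(r)}_{0})$ at all. Instead it simply quotes the formulas \eq{2LQ-L0 1} and \eq{2LQ-L0 2} that drop out of the BAR analysis in the proof of \mythr{2LQ-main}: for $b_{1}=0$, $\dd{P}(L^{(r)}=0) = \frac{r\sigma_{1}^{2}}{2\ell_{0}}\varphi^{(r)}_{1}(0) + o(r)$, and for $b_{1}\neq0$, $\dd{P}(L^{(r)}=0) = \frac{rb_{1}e^{\beta_{1}\ell_{0}}}{c_{1}(e^{\beta_{1}\ell_{0}}-1)}\varphi^{(r)}_{1}(0) + o(r)$. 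Since $\varphi^{(r)}_{1}(0)\to\widetilde{A}_{1}$ under the hypotheses of (ii), these give $\lim_{r\downarrow0} r^{-1}\dd{P}(L^{(r)}=0)$ immediately, with no boundary bookkeeping at $\ell^{(r)}_{0}$. Your intuition that ``order-$r$ accuracy near the special levels'' is required is right, but the BAR proof of \mythr{2LQ-main} has already done that work at level $0$ --- \eq{2LQ-L0 1} and \eq{2LQ-L0 2} are precisely the boundary-layer constant you are after --- so the corollary is pure algebra from there.

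Your route via the work-conservation identity \eq{2LQ-alpha 2} is not wrong in principle, but to extract $\dd{P}(L^{(r)}=0)$ at order $r$ from it you would need $\alpha^{(r)}_{e}$ at order $r$, and \eq{2LQ-alpha 1} expresses $\alpha^{(r)}_{e}$ through Palm probabilities rather than time-stationary ones; pushing those to order $r$ would essentially recreate the analysis that produced \eq{2LQ-L0 1}--\eq{2LQ-L0 2} in the first place.
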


Let us consider \mythr{2LQ-main} when $T_{e,1}^{(r)}, T_{e,2}^{(r)}$ and $T_{s}^{(r)}$ are all exponentially distributed. We call this model the $r$-th 2-level $M/M/1$ queue. Note that (\sect{sequence}.f) is automatically satisfied in this exponential model.

\begin{proposition}
\label{mypro:2LQ-exponential}
Under the assumptions (\sect{sequence}.a)--(\sect{sequence}.e), the scaled stationary distribution $\nu^{(r)}_{\exp}$ of the $r$-th 2-level $M/M/1$ queue weakly converges to $\widetilde{\nu}_{\exp}$ as $r \downarrow 0$, where $\widetilde{\nu}_{\exp}$ is defined as $\widetilde{\nu}_{\exp}(B) = \widetilde{\nu}_{1|cd}(B) \widetilde{A}_{\exp,1} + \widetilde{\nu}_{2|cd}(B) \widetilde{A}_{\exp,2}$ for $B \in \sr{B}(\dd{R}_{+})$ in which
\begin{align}
\label{eq:2LQ-A-MM 1}
  & \widetilde{A}_{\exp,1} = \begin{cases}
\ds \frac {\beta_{2} \ell_{1}} {1 + \beta_{2} \ell_{1}}, & b_{1} =0,\smallskip\\
\ds \frac {\beta_{2} (e^{\beta_{1} \ell_{1}} - 1)} {\beta_{1} + \beta_{2} (e^{\beta_{1} \ell_{1}} - 1)}, \quad & b_{1} \not=0,   
\end{cases}
\end{align}
and $\widetilde{A}_{\exp,2} = 1 - \widetilde{A}_{\exp,1}$.
\end{proposition}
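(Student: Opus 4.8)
I would prove \mypro{2LQ-exponential} by a direct computation rather than by the BAR approach: for the 2-level $M/M/1$ queue the stationary distribution is explicit, and the statement reduces to passing the scaling $rL^{(r)}$ to the limit. By memorylessness, as far as the law of $L^{(r)}$ is concerned the Markov process $X^{(r)}(\cdot)$ collapses to a countable-state continuous-time Markov chain: $R^{(r)}_{s}(t)$ carries no information beyond $\{L^{(r)}(t)\ge 1\}$, and $R^{(r)}_{e}(t)$ reduces to a binary ``arrival regime'' recording whether the inter-arrival clock currently in force was started from a state below $\ell^{(r)}_{0}$ (rate $\lambda^{(r)}_{1}$) or at/above it (rate $\lambda^{(r)}_{2}$). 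Solving the balance (cut) equations of this chain, as carried out in \myapp{2LQ-exponential-proof}, shows that the queue-length marginal is piecewise geometric: with $Z^{(r)}$ the normalizing constant, $\dd{P}(L^{(r)}=k)=(Z^{(r)})^{-1}(\rho^{(r)}_{1})^{k}$ for $0\le k\le\ell^{(r)}_{0}$ and $\dd{P}(L^{(r)}=k)=(Z^{(r)})^{-1}(\rho^{(r)}_{1})^{\ell^{(r)}_{0}}(\rho^{(r)}_{2})^{k-\ell^{(r)}_{0}}$ for $k\ge\ell^{(r)}_{0}$, up to $O(1)$ corrections localized at $k=0$ and near $k=\ell^{(r)}_{0}$ (the only states at which the regime coordinate is not forced by $k$).

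The limit then proceeds in three routine steps. First, specialize the constants in \eq{2LQ-beta 1}: exponential distributions give $\sigma^{2}_{e,i}=m^{2}_{e,i}$ and $\sigma^{2}_{s}=m^{2}_{s}$, so $\sigma^{2}_{i}=\lambda^{2}_{i}m^{2}_{e,i}+\mu^{2}m^{2}_{s}=2$ and hence $\beta_{i}=b_{i}/c_{i}$ for $i=1,2$. Second, from (\sect{sequence}.c) we have $1-\rho^{(r)}_{i}=rb_{i}/c_{i}+o(r)=r\beta_{i}+o(r)$ (so $1-\rho^{(r)}_{1}=o(r)$ when $b_{1}=0$), and with $r\ell^{(r)}_{0}\to\ell_{0}$ from (\sect{sequence}.d) one gets $(\rho^{(r)}_{1})^{\fl{x/r}}\to e^{-\beta_{1}x}$ (or $\to1$ if $b_{1}=0$) uniformly on $[0,\ell_{0}]$; combined with the geometric envelope $(\rho^{(r)}_{2})^{k-\ell^{(r)}_{0}}\le Ce^{-\beta_{2}(rk-\ell_{0})/2}$ valid for all small $r$, bounded and dominated convergence yield, for every bounded continuous $f$,
\begin{align*}
 r\sum_{0\le k\le\ell^{(r)}_{0}} f(rk)(\rho^{(r)}_{1})^{k} &\longrightarrow \int_{0}^{\ell_{0}} f(x)e^{-\beta_{1}x}\,dx,\\
 r\sum_{k>\ell^{(r)}_{0}} f(rk)(\rho^{(r)}_{1})^{\ell^{(r)}_{0}}(\rho^{(r)}_{2})^{k-\ell^{(r)}_{0}} &\longrightarrow e^{-\beta_{1}\ell_{0}}\int_{\ell_{0}}^{\infty} f(x)e^{-\beta_{2}(x-\ell_{0})}\,dx,
\end{align*}
the first limit reading $\int_{0}^{\ell_{0}}f(x)\,dx$ when $b_{1}=0$. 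Taking $f\equiv1$ over each range gives $rZ^{(r)}_{1}\to(1-e^{-\beta_{1}\ell_{0}})/\beta_{1}$ (or $\to\ell_{0}$ if $b_{1}=0$) and $rZ^{(r)}_{2}\to e^{-\beta_{1}\ell_{0}}/\beta_{2}$, where $Z^{(r)}_{1},Z^{(r)}_{2}$ are the unnormalized masses of $\{L^{(r)}\le\ell^{(r)}_{0}\}$ and $\{L^{(r)}>\ell^{(r)}_{0}\}$ and $Z^{(r)}=Z^{(r)}_{1}+Z^{(r)}_{2}$; the $O(1)$ boundary corrections disappear after multiplication by $r$. Third, dividing out $Z^{(r)}$ gives $\nu^{(r)}_{i}(\dd{R}_{+})=Z^{(r)}_{i}/Z^{(r)}\to\widetilde{A}_{\exp,i}$, which a short computation checks to be exactly the value in \eq{2LQ-A-MM 1} in both the $b_{1}=0$ and $b_{1}\ne0$ cases, while the normalized Riemann-sum limits identify the conditional limits with $h_{1|cd}$ and $h_{2|cd}$ from \eq{2LQ-h 1}--\eq{2LQ-h 2}. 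Hence $\dd{E}[f(rL^{(r)})]\to\widetilde{A}_{\exp,1}\int f\,d\widetilde{\nu}_{1|cd}+\widetilde{A}_{\exp,2}\int f\,d\widetilde{\nu}_{2|cd}$ for all bounded continuous $f$, i.e.\ $\nu^{(r)}_{\exp}$ converges weakly to $\widetilde{\nu}_{\exp}$; since $\widetilde{\nu}_{\exp}$ is absolutely continuous this also yields convergence of the distribution functions.

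There is no deep obstacle; the work is bookkeeping, and two points deserve care. First, the arrival-regime coordinate genuinely breaks the birth-and-death structure at the threshold (the up-rate from $\ell^{(r)}_{0}$ to $\ell^{(r)}_{0}+1$ is a regime-dependent mixture of $\lambda^{(r)}_{1}$ and $\lambda^{(r)}_{2}$), so one must verify --- using the explicit solution of \myapp{2LQ-exponential-proof} --- that it perturbs the queue-length marginal only through $O(1)$ mass localized at $k=0$ and within $O(1)$ of $\ell^{(r)}_{0}$, hence annihilated by the factor $r$ and invisible in the limit. Second, the degenerate case $b_{1}=0$ must be carried separately throughout: there $1-\rho^{(r)}_{1}=o(r)$, the below-threshold geometric flattens to the uniform profile $h_{1|cd}\equiv1/\ell_{0}$, and $\widetilde{A}_{\exp,1}$ takes its first form in \eq{2LQ-A-MM 1}. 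Everything else --- the uniform convergence of the discretized summand, the dominated-convergence bound for the tail sum, and the algebra matching $Z^{(r)}_{1}/Z^{(r)}$ to \eq{2LQ-A-MM 1} --- is routine.
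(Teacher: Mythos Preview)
Your approach is essentially the same as the paper's: both set up the two-dimensional $M/M$ chain (queue length plus arrival regime), solve it explicitly, and pass to the limit. The paper carries this out via moment generating functions $g^{(r)}_{j}(\theta)$; you use Riemann sums. Either packaging works and yields the same limit.

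One point deserves more care than you give it. Your piecewise-geometric ansatz writes both ranges with a \emph{single} normalizing constant $Z^{(r)}$, which implicitly asserts that
\[
\frac{p^{(r)}_{3,\ell^{(r)}_{0}+1}}{p^{(r)}_{1,0}}\;\sim\;(\rho^{(r)}_{1})^{\ell^{(r)}_{0}}\rho^{(r)}_{2}\;\longrightarrow\;e^{-\beta_{1}\ell_{0}}.
\]
This is correct, but it is not a boundary-layer triviality: it is exactly the content of the paper's \mylem{boundary-p}, where the factor $(\lambda_{1}+\lambda_{2})/\lambda_{2}$ appearing in $p_{3,\ell_{0}+1}/p_{1,\ell_{0}}$ is cancelled by the \emph{same} factor in $p_{1,0}/p_{1,\ell_{0}}$, and obtaining the latter requires solving the full recursion \eq{p1k}--\eq{p1ell}. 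Without this cancellation your weights above and below $\ell_{0}$ would not join with the right constant, and $Z^{(r)}_{1}/Z^{(r)}$ would not converge to $\widetilde{A}_{\exp,1}$. So the step you label ``bookkeeping'' and defer to \myapp{2LQ-exponential-proof} is in fact where most of the work lies. (Equivalently: the limiting density happens to be continuous at $\ell_{0}$ in the exponential case because $\sigma_{1}^{2}=\sigma_{2}^{2}=2$; that is what makes your naive gluing correct, and it is not automatic.)

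Two smaller remarks. There is no special correction at $k=0$: the balance $\lambda_{1}p_{1,0}=c_{1}\mu p_{1,1}$ is exactly geometric, and $p_{2,0}$ is exponentially small, so the boundary layer lives only near $k=\ell^{(r)}_{0}$. And your phrase ``$O(1)$ corrections'' is a bit loose: the regime-2 mass and the non-geometric part of $p_{1,k}$ each carry total probability $O(r)$ (this is the paper's $g^{(r)}_{2}(0)\to0$), which is why they vanish after integration against bounded $f$; calling this ``$O(1)$ unnormalized mass annihilated by $r$'' obscures that the relevant estimate is already on normalized probabilities.
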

\begin{remark}
\label{myrem:2LQ-expenential}
Since $\sigma_{i}^{2} = 2$ and $\beta_{i} = b_{i}/c_{i}$ for $i=1,2$ in the exponential case, $\widetilde{\nu}_{\exp}$ and $\widetilde{A}_{\exp,i}$ agree with $\widetilde{\nu}$ and $\widetilde{A}_{\exp,i}$ in \mythr{2LQ-main}, respectively, for $i=1,2$.
\pend
\end{remark}

Of course, \mypro{2LQ-exponential} is the special case of (ii) of \mythr{2LQ-main}, so no proof is needed. However, for a double check, we give its direct proof in \myapp{2LQ-exponential-proof}, which is helpful to convince (ii) of \mythr{2LQ-main}. This proof is not obvious because the exponential model requires a Markov chain with two-dimensional state space to correctly describe the case that an arrival find the queue length less than $\ell_{1}$ when the previous arrival found the queue length greater than $\ell_{1}$ but multiple departures occurred (see the arrival at time $t^{(r)}_{e}(8)$ in \myfig{2LQ-1}). This Markov chain will be detailed in \myapp{2LQ-exponential-proof}.

\section{BAR's for the 2-level $GI/G/1$ queue}
\label{mysec:2LQ-BAR}
\setnewcounter

In this section, we prepare auxiliary notations and results for proving \mythr{2LQ-main}. A major difficulty for this proof comes from the fact that $L^{(r)}(\cdot)$ is not a Markov chain, and we have to work with $X^{(r)}(\cdot)$. Instead of directly computing the stationary distribution, we study it through a stationary equation, called a basic adjoint relationship, BAR in short, using Palm distributions, which will be defined below. This method is called a BAR approach, and has been used in \cite{BravDaiMiya2017,BravDaiMiya2024,Miya2017}. Recently, this approach is further studied in \cite{Miya2024}.

Throughout this section, we assume that (\sect{sequence}.a)--(\sect{sequence}.e) hold unless stated otherwise, but (\sect{sequence}.f) is not assumed. The proof of \mythr{2LQ-main} will be done in \sectn{proof-main}.

\subsection{Palm distribution and BAR}
\label{mysec:Palm-BAR}

Recall that we are working on stochastic basis $(\Omega,\sr{F},\dd{F},\dd{P})$ with time-shift operator semi-group $\Theta_{\bullet}$. Furthermore, by the stability assumption (\sect{sequence}.e), we can assume that this framework is stationary, that is,
\begin{align*}
  \dd{P}(\Theta_{t}^{-1} A) = \dd{P}(A), \qquad A \in \sr{F}.
\end{align*}
Then, for each $r \in (0,1]$, $X^{(r)}(\cdot)$ and the increments of $N^{(r)}_{e}$ and $N^{(r)}_{s}$ are jointly stationary. Since $\alpha^{(r)}_{e}$ and $\alpha^{(r)}_{s}$ are finite and positive by \mylem{alpha 1}, we can define probability distributions $\dd{P}^{(r)}_{e}$ and $\dd{P}^{(r)}_{s}$ on $(\Omega,\sr{F})$ by
\begin{align}
\label{eq:Palm 1}
  \dd{P}^{(r)}_{v}(A) = (\alpha^{(r)}_{v})^{-1} \dd{E}\left[ \int_{(0,1]} 1_{\Theta_{u}^{-1} A} N^{(r)}_{v}(du) \right], \qquad A \in \sr{F}, v=e,s,
\end{align}
where $1_{A}$ is the indicator function of the event $A \in \sr{F}$. For $v=e,s$, $\dd{P}^{(r)}_{v}$ is called a Palm distribution concerning the counting process $N^{(r)}_{v}$.

The Palm distributions play a major role for the derivation of the stationary equations in our analysis (e.g., see \eq{BAR-1}). Intuitively, $\dd{P}^{(r)}_{v}$ is considered as the conditional probability distribution given the event that $N^{(r)}_{v}$ counts arrivals for $v=e$ (service completion for $v=s$) at time $0$. This event occurs with probability $0$ under the stationary framework, so such an event is often handled as a boundary condition in the stationary equation  (e.g, see \cite{Davi1993}). On the other hand, Palm distributions are defined through time averages under the stationary framework. This enables us to derive the stationary equation without any boundary condition, where the boundary conditions are handled by Palm distributions in the stationary equation. This is quite convenient because the boundary conditions may be only asymptotically satisfied. This is exactly our case.

We now derive a BAR for $X^{(r)}(\cdot)$ using the framework proposed in \cite{Miya2024}. For this, we introduce notations for describing the dynamics of $X^{(r)}(\cdot)$. Let $S$ be its state space, that is, $S = \dd{Z}_{+} \times \dd{R}_{+}^{2}$, and let $C_{b}^{p}(S)$ be the set of all bounded and continuous functions from $S$ to $\dd{R}_{+}$ which are partially differentiable from the right concerning the second and third variables and in which $\frac {\partial^{+}}{\partial x_{i}}f$ is the $i$-th partial derivative from the right. Define operator $\sr{H}$ on $C_{b}^{p}(S)$ which maps $f \in C_{b}^{p}(S)$ to $\sr{H} f \in C_{b}^{p}(S)$ by
\begin{align*}
 & \sr{H} f(\vc{x}) = - \frac {\partial^{+}}{\partial x_{2}} f(\vc{x}) - (c_{1} 1(1 \le x_{1} \le \ell_{1}) + c_{2} 1(x_{1} > \ell_{1})) \frac {\partial^{+}}{\partial x_{3}} f(\vc{x}), \qquad \vc{x} \in S,
\end{align*}
where $\vc{x} = (x_{1},x_{2},x_{3}) \in \dd{R}_{+}^{2}$.

In what follows, we often use a difference operator $\Delta$, which is defined as $\Delta g(t) = g(t) - g(t-)$ for function $g$ from $\dd{R}_{+}$ to $\dd{R}$ which is right continuous with the left-limit. For $t \ge 0$, define $\dd{F}$-adapted process $X^{(r)}_{1}(\cdot)$ and $L^{(r)}_{1}(\cdot)$ as 
\begin{align*}
  & X^{(r)}_{1}(t) = 
\begin{cases}
X^{(r)}(t-), & \Delta N^{(r)}_{e}(t) = 0,\\
 (L^{(r)}(t-) + 1,  T^{(r)}_{e,1}(n_{e,1}(t)), R^{(r)}_{s}(t-)), \quad & \Delta N^{(r)}_{e}(t) = 1, L^{(r)}(t-) < \ell^{(r)}_{1},\\
 (L^{(r)}(t-) + 1,  T^{(r)}_{e,2}(n_{e,2}(t)), R^{(r)}_{s}(t-)), \quad & \Delta N^{(r)}_{e}(t) = 1, L^{(r)}(t-) \ge \ell^{(r)}_{1},
\end{cases}
\\
  & L^{(r)}_{1}(t) = L^{(r)}(t-) + 1(\Delta N^{(r)}_{e}(t) = 1), \qquad R^{(r)}_{1}(t) = (R^{(r)}_{e}(t), R^{(r)}_{s}(t-)),
\end{align*}
where $n_{e,1}(t) = \sum_{n=1}^{N^{(r)}_{e}(t)} 1(L^{(r)}(t^{(r)}_{e,n}-) < \ell^{(r)}_{1})$ and $n_{e,2}(t) = \sum_{n=1}^{N^{(r)}_{e}(t)} 1(L^{(r)}(t^{(r)}_{e,n}-) \ge \ell^{(r)}_{1})$. We call $X^{(r)}_{1}(t)$ an intermediate state. Note that
\begin{align*}
  X^{(r)}(t) = (L^{(r)}_{1}(t) -1, R^{(r)}_{e}(t), T^{(r)}_{s}(N^{(r)}_{s}(t)))
\end{align*}
when $\Delta N^{(r)}_{s}(t) = 1$, and $X^{(r)}(t) = X^{(r)}_{1}(t)$ when $\Delta N^{(r)}_{s}(t) = 0$.

In these definitions, $X^{(r)}(0-)$ is required, but formally not defined. However, this does not cause any problem. Please see (i) of \myrem{BAR-1} for its details.
For convenience, we define another difference operators $\Delta_{e}$ and $\Delta_{s}$ as
\begin{align*}
 & \Delta_{e} f (X^{(r)})(t) = f(X^{(r)}_{1}(t)) - f(X^{(r)}(t-)),\\
 & \Delta_{s} f (X^{(r)})(t) = f(X^{(r)}(t)) - f(X^{(r)}_{1}(t)).
\end{align*}
Further, let $f(X)(t) = f(X(t))$. Then, from the time evolution of $X^{(r)}(t)$, we have
\begin{align}
\label{eq:time-evolution-1}
  f(X^{(r)}(t)) = f(X^{(r)}(0)) & + \int_{0}^{t} \sr{H} f(X^{(r)}(u)) du  \nonumber\\
  & + \sum_{u \in (0,t]} [\Delta_{e} f(X^{(r)})(u) + \Delta_{s} f(X^{(r)})(u)],
\end{align}
where the summation is well defined because $\Delta_{e} f(X^{(r)})(u)$ and $\Delta_{s} f(X^{(r)})(u)$ vanish except for finitely many $u \in (0,t]$. Take the expectation of \eq{time-evolution-1} for $t=1$ by $\dd{P}$, then the stationarity of $X(\cdot)$ and the definition of the Palm distribution yield
\begin{align*}
 & \dd{E}\left[\int_{0}^{1} \sr{H} f(X^{(r)}(u)) du\right] = \int_{0}^{1} \dd{E}\left[\sr{H} f(X^{(r)}(u))\right] du = \dd{E}\left[\sr{H} f(X^{(r)}(0))\right],\\
 & \dd{E}\left[\sum_{u \in (0,1]} \Delta_{v} f(X^{(r)})(u)]\right] = \alpha^{(r)}_{v} \dd{E}^{(r)}_{v}\left[\Delta_{v} f(X^{(r)})(0) \right], \qquad v= e,s.
\end{align*}
Hence, we have a stationary equation:
\begin{align}
\label{eq:BAR-1}
  & \dd{E}\left[ \sr{H}f(X^{(r)}(0)) \right]  \nonumber\\
  & \qquad + \alpha^{(r)}_{e} \dd{E}^{(r)}_{e}\left[\Delta_{e} f(X^{(r)})(0) \right] + \alpha^{(r)}_{s} \dd{E}^{(r)}_{s}\left[\Delta_{s} f(X^{(r)})(0) \right] = 0, \qquad f \in C_{b}^{p},
\end{align}

 The formula \eq{BAR-1} is a special case of the rate conservation law (e.g., see \cite{Miya1994}), and referred as a basic adjoint relationship, BAR in short, then we call analysis based on this BAR as a BAR approach. A major difficulty in this BAR approach is to evaluate the expectations under the Palm distributions (the last two terms in \eq{BAR-1}). Their evaluations will be done for the 2-level $GI/G/1$ queue in \sectn{expansion}.
 
\begin{remark}
\label{myrem:BAR-1}
We give two remarks on Palm distributions. (i) $X^{(r)}(0-)$ is not defined under $\dd{P}$, but its distribution is well defined under the Palm distributions $\dd{P}^{(r)}_{e}$ and $\dd{P}^{(r)}_{e}$ because the definition \eq{Palm 1} only uses $X^{(r)}(t)$ for $t > 0$ under $\dd{P}$. So, $X^{(r)}(0-)$ is a random variable which can be defined under $\dd{P}^{(r)}_{e}$ and $\dd{P}^{(r)}_{s}$. Hence, $\Delta_{e} f(X^{(r)})(0)$ and $\Delta_{s} f(X^{(r)})(0)$  in \eq{BAR-1} are well defined under the Palm distributions.\\
(ii) Under the Palm distributions, $X^{(r)}_{1}(0)$ is an intermediate state between just after when $N^{(r)}_{e}$ is counted at time $0$ and just before when $N^{(r)}_{s}$ is counted at time $0$. This means that we take a so called arrival first framework. As for the intermediate state, see \cite{Miya2024} for its details. We consider $L^{(r)}(0-)$ under $\dd{P}^{(r)}_{e}$, while $L^{(r)}(0)$ under $\dd{P}^{(r)}_{s}$. This prevents to directly refer to the intermediate state $X^{(r)}_{1}(0)$ in our arguments. Of course, the intermediate state is not needed if $N^{(r)}_{e}$ and $N^{(r)}_{s}$ do not have a simultaneous count, but such restriction is avoided by introducing $X^{(r)}_{1}(0)$.
\pend
\end{remark}

We note basic facts on $\alpha^{(r)}_{e}$ and $L^{(r)}(\cdot)$ under the Palm distributions. 
\begin{lemma}
\label{mylem:2LQ-basic-1}
Under conditions (\sect{sequence}.b) and (\sect{sequence}.e), 
\begin{align}
\label{eq:2LQ-balance-1}
 & \dd{P}^{(r)}_{e}[(L^{(r)}(0-) = k] = \dd{P}^{(r)}_{s}[L^{(r)}(0) = k], \qquad k \ge 0,\\
\label{eq:2LQ-alpha 1}
 & \alpha^{(r)}_{e} = \frac {\lambda^{(r)}_{1} \lambda^{(r)}_{2}} {\lambda^{(r)}_{2} \dd{P}^{(r)}_{e}(L^{(r)}(0-) < \ell^{(r)}_{1}) + \lambda^{(r)}_{1} \dd{P}^{(r)}_{e}(L^{(r)}(0-) \ge \ell^{(r)}_{1})},\\
\label{eq:2LQ-alpha-2}
 & \alpha^{(r)}_{e} = c^{(r)}_{1} \mu^{(r)} \dd{P}(1 \le L^{(r)} \le \ell^{(r)}_{1}) + c^{(r)}_{2} \mu^{(r)} \dd{P}(L^{(r)} > \ell^{(r)}_{1}).
\end{align}
\end{lemma}

\begin{proof}
For each integer $k \ge 0$, let $f(z,\vc{x}) = z \wedge (k+1)$ for $(z,\vc{x}) \in S$, then obviously $f \in C_{b}^{p}(S)$. Since $\Delta_{e} f(X^{(r)})(u) = 1(L^{(r)}(u-) \le k, N^{(r)}_{e}(u) - N^{(r)}_{e}(u-) = 1)$ and $\Delta_{s} f(X^{(r)})(u) = - 1(L^{(r)}(u) \le k, N^{(r)}_{s}(u) - N^{(r)}_{s}(u-) = 1)$, we have
\begin{align*}
 & \alpha^{(r)}_{e} \dd{E}^{(r)}_{e}\left[\Delta_{e} f(X^{(r)})(0) \right] + \alpha^{(r)}_{s} \dd{E}^{(r)}_{s}\left[\Delta_{s} f(X^{(r)})(0) \right] \\
 & \quad =  \alpha^{(r)}_{e} \dd{P}^{(r)}_{e}\left[L^{(r)}(0-) \le k\right] - \alpha^{(r)}_{s} \dd{P}^{(r)}_{s}\left[L^{(r)}(0) \le k\right].
\end{align*}
Applying this to \eq{BAR-1} yields
\begin{align*}
  \alpha^{(r)}_{e} \dd{P}^{(r)}_{e}\left[L^{(r)}(0-) \le k\right] - \alpha^{(r)}_{s} \dd{P}^{(r)}_{s}\left[L^{(r)}(0) \le k\right] = 0, \qquad k \ge 0.
\end{align*}
Since $\alpha^{(r)}_{e} = \alpha^{(r)}_{s}$ by \mylem{alpha 1}, the above formula yields \eq{2LQ-balance-1}. We next let $f(z,\vc{x}) = x_{1}$, and apply \eq{BAR-1}, then
\begin{align*}
   - 1 + \alpha^{(r)}_{e} \dd{E}^{(r)}_{e}\left[ T^{(r)}_{e,1} 1(L^{(r)}(0-) <\ell^{(r)}_{1}) + T^{(r)}_{e,2} 1(L^{(r)}(0-) \ge \ell^{(r)}_{1}) \right] = 0
\end{align*}
because $(R^{(r)}_{e})'(t) = -1$. This proves \eq{2LQ-alpha 1}. Similarly, let $f(z,\vc{x}) = x_{2}$, then 
\begin{align}
\label{eq:2LQ-Rs-dif 1}
  (R^{(r)}_{s})'(t) = - c_{1}^{(r)} 1(1 \le L^{(r)}(0) \le \ell^{(r)}_{1}) - c_{2}^{(r)} 1(L^{(r)}(0) > \ell^{(r)}_{1}),
\end{align}
and $\dd{E}^{(r)}_{s}[\Delta_{s} R^{(r)}_{s}(0)] = \dd{E}[T^{(r)}_{s}]$. Hence, \eq{BAR-1} yields \eq{2LQ-alpha-2}.
\end{proof}

Since it follows from \eq{2LQ-alpha 1} and (\sect{sequence}.b) that
\begin{align}
\label{eq:alpha-r-bounds}
  0 < \lambda^{(r)}_{1} \wedge \lambda^{(r)}_{2} \le \alpha^{(r)}_{e} \le \lambda^{(r)}_{1} \vee \lambda^{(r)}_{2} < \infty, \qquad r \in (0,1],
\end{align}
we have the following corollary.
\begin{corollary}
\label{mycor:alpha 2}
Under conditions (\sect{sequence}.b) and (\sect{sequence}.e), $\alpha^{(r)}_{e}$ is uniformly bounded away from $0$ and uniformly finite for all $r \in (0,1]$.
\end{corollary}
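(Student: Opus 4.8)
The plan is to read the claim off directly from the two-sided bound \eq{alpha-r-bounds}, which has already been derived from \eq{2LQ-alpha 1}: for every $r \in (0,1]$,
\[
  \min(\lambda^{(r)}_{1},\lambda^{(r)}_{2}) \le \alpha^{(r)}_{e} \le \max(\lambda^{(r)}_{1},\lambda^{(r)}_{2}).
\]
Thus it suffices to check that the arrival rates $\lambda^{(r)}_{1}$ and $\lambda^{(r)}_{2}$ are themselves uniformly bounded above and uniformly bounded away from $0$ over the (countably many) values of $r$ under consideration. This is immediate from (\sect{sequence}.b): for $i=1,2$, $\lambda^{(r)}_{i} \to \lambda_{i}$ as $r \downarrow 0$ with $0 < \lambda_{i} < \infty$, so for any sequence $r \downarrow 0$ there is $r_{0} > 0$ with $\tfrac12 \lambda_{i} \le \lambda^{(r)}_{i} \le 2\lambda_{i}$ for all $r \le r_{0}$, while the remaining finitely many indices $r \in (r_{0},1]$ contribute only finitely many positive finite numbers $\lambda^{(r)}_{i}$. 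Hence $0 < \inf_{r} \min(\lambda^{(r)}_{1},\lambda^{(r)}_{2})$ and $\sup_{r} \max(\lambda^{(r)}_{1},\lambda^{(r)}_{2}) < \infty$, and \eq{alpha-r-bounds} transfers these bounds to $\alpha^{(r)}_{e}$.

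There is essentially no obstacle here: the substantive work was already carried out in \mylem{2LQ-basic 1} (which produces \eq{2LQ-alpha 1}) and in deriving \eq{alpha-r-bounds}. The only point worth a remark is the precise meaning of ``uniformly'': since the index set is the countable family of $r$'s chosen for the limiting operation, with $0$ as its only accumulation point, uniform boundedness is equivalent to the asymptotic statements $0 < \liminf_{r \downarrow 0} \alpha^{(r)}_{e}$ and $\limsup_{r \downarrow 0} \alpha^{(r)}_{e} < \infty$, both of which follow by squeezing $\alpha^{(r)}_{e}$ between quantities converging to the strictly positive finite limits $\min(\lambda_{1},\lambda_{2})$ and $\max(\lambda_{1},\lambda_{2})$. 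One could equally invoke \eq{2LQ-alpha 2} together with (\sect{sequence}.b), or the crude renewal bounds used in the proof of \mylem{alpha 1}, but \eq{alpha-r-bounds} is the cleanest route, so I would simply state that the corollary follows from \eq{alpha-r-bounds} and (\sect{sequence}.b).
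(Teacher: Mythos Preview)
Your proposal is correct and follows exactly the route the paper takes: the corollary is stated immediately after \eq{alpha-r-bounds} with no further proof, so the paper treats it as an immediate consequence of that two-sided bound together with (\sect{sequence}.b). Your added detail about why the $\lambda^{(r)}_{i}$ are uniformly bounded above and below is sound but more than the paper itself provides.
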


To prove \mythr{2LQ-main}, we will fully use the BAR \eq{BAR-1} replacing $\alpha^{(r)}_{s}$ by $\alpha^{(r)}_{e}$ because $\alpha^{(r)}_{s} = \alpha^{(r)}_{e}$ by \mylem{alpha 1}. We first apply an exponential type of a test function to this BAR, and derive various properties on it under the stationary distribution $\dd{P}$ and the Palm distributions $\dd{P}^{(r)}_{e}$ and $\dd{P}^{(r)}_{s}$ in \sectn{asymptotic}, then \mythr{2LQ-main} is proved in \sectn{proof-main}.

\subsection{Asymptotic BAR for the pre-limit process $X^{(r)}(\cdot)$}
\label{mysec:asymptotic}

We will prove \mythr{2LQ-main} using the moment generating functions $\varphi^{(r)}_{i}(\theta_{i})$ and $\varphi^{(r)}_{i|cd}(\theta_{i})$ of $\nu^{(r)}_{i}$ and $\nu^{(r)}_{i|cd}$, respectively, for $i=1,2$. Namely, for $\theta_{1} \in \dd{R}$ and $\theta_{2} \le 0$,
\begin{align*}
 & \varphi^{(r)}_{1}(\theta_{1}) = \dd{E}[e^{\theta_{1} rL^{(r)}}1(L^{(r)} \le \ell^{(r)}_{1})], \qquad \varphi^{(r)}_{2}(\theta_{2}) = \dd{E}[e^{\theta_{2} rL^{(r)}}1(\ell^{(r)}_{1} < L^{(r)})],\\
 & \varphi^{(r)}_{1|cd}(\theta_{1}) = \dd{E}[e^{\theta_{1} rL^{(r)}}|L^{(r)} \le \ell^{(r)}_{1}], \qquad \varphi^{(r)}_{2|cd}(\theta_{2}) = \dd{E}[e^{\theta_{2} rL^{(r)}}|\ell^{(r)}_{1} < L^{(r)}].
\end{align*}
To this end, we introduce test functions for the BAR \eq{BAR-1}. We first define functions $w^{(r)}_{i}$ and $g^{(r)}_{i,\theta}$ for $i=1,2$ and $\theta \in \dd{R}$ as
\begin{align*}
 & w^{(r)}_{1}(z) = 1(z \le \ell^{(r)}_{1}), \qquad w^{(r)}_{2}(z) = 1(z > \ell^{(r)}_{1}), \qquad z \in \dd{Z}_{+}, \\
 & g^{(r)}_{i,\theta}(x_{1},x_{2}) = \exp\left( - \eta^{(r)}_{i} (\theta) (x_{1} \wedge 1/r) - \zeta^{(r)}(\theta) (x_{2} \wedge 1/r)\right), \qquad (x_{1},x_{2}) \in \dd{R}_{+}^{2},
\end{align*}
where $\eta^{(r)}_{i}(\theta), \zeta^{(r)}(\theta)$ are the unique solutions of the following equations for each $\theta \in \dd{R}$. 
\begin{align}
\label{eq:2LQ-boundary 1}
  e^{\theta} \dd{E}\left[e^{-\eta^{(r)}_{i}(\theta) (T^{(r)}_{e,i} \wedge 1/r)}\right] = 1, \quad i=1,2, \quad e^{-\theta} \dd{E}\left[e^{-\zeta^{(r)}(\theta) (T^{(r)}_{s} \wedge 1/r)}\right] = 1,
\end{align}
because $\eta^{(r)}_{i}(\theta)$ is the inverse function of the Laplace transform of the finite positive random variable $T^{(r)}_{e,i} \wedge 1/r$ taking the value $e^{-\theta}$ and $\zeta^{(r)}(\theta)$ is similarly determined. Then, for each $\vc{\theta} \equiv (\theta_{1},\theta_{2}) \in \dd{R}^{2}$, define test function $f^{(r)}_{\vc{\theta}}$ for the BAR as
\begin{align}
\label{eq:f-theta-r}
 & f^{(r)}_{\vc{\theta}}(z,x_{1},x_{2}) = \sum_{i=1,2} e^{\theta_{i} z} g^{(r)}_{i,\theta_{i}}(x_{1},x_{2}) w^{(r)}_{i}(z), \qquad (z,x_{1},x_{2}) \in S.
\end{align}

Functions $\eta^{(r)}_{i}(\theta)$ and $\zeta^{(r)}(\theta)$ play key roles in our analysis. Consider their properties. First, it is easy to see that they are infinitely many differentiable functions of $\theta$. Hence, twice differentiating the equations in \eq{2LQ-boundary 1}, the following lemma can be obtained by their Taylor expansions around the origin. It is formally proved in \cite[Lemma 5.8]{BravDaiMiya2024}.
\begin{lemma}
\label{mylem:2LQ-eta-zeta 1}
Under the assumptions (\sect{sequence}.a)--(\sect{sequence}.d), we have the following expansions as $r \downarrow 0$ for each fixed $\theta \in \dd{R}$.
\begin{align}
\label{eq:2LQ-eta 1}
 & \eta_{i}^{(r)}(r\theta) = \lambda^{(r)}_{i} r \theta + \frac 12 (\lambda^{(r)}_{i})^{3} (\sigma^{(r)}_{e,i})^{2} r^{2} \theta^{2} + o((\theta r)^{2}), \qquad i = 1,2,\\
\label{eq:2LQ-zeta 1}
 & \zeta^{(r)}(r\theta) = - \mu^{(r)} r \theta + \frac 12 (\mu^{(r)})^{3} (\sigma^{(r)}_{s})^{2} r^{2} \theta^{2} + o((\theta r)^{2}).
\end{align}
Furthermore, for $i=1,2$, there are constant $d_{e,i}, d_{s}, a > 0$ such that, for $r \in (0,1]$ and $\theta \in \dd{R}$ satisfying $r|\theta| < a$,
\begin{align}
\label{eq:2LQ-eta-b}
 & \left|\eta^{(r)}_{i}(r\theta) (u_{1} \wedge 1/r) \right| \le |r \theta| d_{e,i} (u_{1} \wedge 1/r), \qquad i=1,2,\\
\label{eq:2LQ-zeta-b}
 & \left|\zeta^{(r)}(r\theta) (u_{2} \wedge 1/r)\right| \le |r \theta| d_{s} (u_{2} \wedge 1/r).
\end{align}
\end{lemma}

\begin{remark}
\label{myrem:2LQ-eta-zeta 1}
In \eq{2LQ-eta 1} and \eq{2LQ-zeta 1}, $o((\theta r)^{2})$ can be replaced by $o(r^{2})$ because $\theta$ is fixed, but $o((\theta r)^{2})$ is convenient for our computation because we need to divide both sides of them by $\theta$ in some cases. We will take this convention throughout the paper as long as it is possibe.
\pend
\end{remark}

Recall that (\sect{sequence}.a)--(\sect{sequence}.e) are always assumed in \sectn{2LQ-BAR} if particular assumptions among them are specified. By \mylem{2LQ-eta-zeta 1}, $g^{(r)}_{i,r\theta_{i}}(R^{(r)})$ is bounded by $e^{|\theta_{i}|(d_{e,i} + d_{s})}$ for sufficiently small $r$ for each $\theta_{1} \in \dd{R}$ and $\theta_{2} \le 0$, so we have the following facts.

\begin{corollary}
\label{mycor:2LQ-limit 1}
Let $R^{(r)} = (R^{(r)}_{e},R^{(r)}_{s})$, then $g^{(r)}_{i,r\theta_{i}}(R^{(r)})$ converges to $1$ as $r \downarrow 0$, and
\begin{align}
\label{eq:2LQ-limit 1}
  \lim_{r \downarrow 0} \dd{E}\left[g^{(r)}_{i,r\theta_{i}}(R^{(r)})\right] = 1, \qquad \lim_{r \downarrow 0} \dd{E}_{u}\left[g^{(r)}_{i,r\theta_{i}}(R^{(r)})\right] = 1, \quad u = e, s.
\end{align}
\end{corollary}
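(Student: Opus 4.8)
The target is \mycor{2LQ-limit 1}, which asserts that the cutoff-truncated exponential test functions $g^{(r)}_{i,r\theta_{i}}(R^{(r)})$ converge to $1$, both pointwise and in expectation under $\dd{P}$, $\dd{P}^{(r)}_{e}$, and $\dd{P}^{(r)}_{s}$.

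\medskip

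\textbf{Proof proposal.} The plan is to combine the bound \eq{2LQ-eta-zeta 1} from \mylem{2LQ-eta-zeta 1} with the expansions \eq{2LQ-eta 1}--\eq{2LQ-zeta 1} and a dominated-convergence argument. First, I would establish pointwise convergence. Fix $\theta_{1} \in \dd{R}$, $\theta_{2} \le 0$, and work on the event $\{r < a/|\theta_{i}|\}$, which holds for all small $r$. By definition,
\begin{align*}
  g^{(r)}_{i,r\theta_{i}}(R^{(r)}) = \exp\left(-\eta^{(r)}_{i}(r\theta_{i})(R^{(r)}_{e} \wedge 1/r) - \zeta^{(r)}(r\theta_{i})(R^{(r)}_{s} \wedge 1/r)\right).
\end{align*}
From \eq{2LQ-eta 1} we have $\eta^{(r)}_{i}(r\theta_{i}) = \lambda^{(r)}_{i} r \theta_{i} + O(r^{2})$, and by (\sect{sequence}.b) $\lambda^{(r)}_{i} \to \lambda_{i}$, so $\eta^{(r)}_{i}(r\theta_{i}) = O(r)$; likewise $\zeta^{(r)}(r\theta_{i}) = O(r)$ by \eq{2LQ-zeta 1}. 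Since $R^{(r)}_{e} \wedge 1/r \le 1/r$ and $R^{(r)}_{s} \wedge 1/r \le 1/r$, each product in the exponent is $O(r) \cdot O(1/r) = O(1)$; more precisely, $\eta^{(r)}_{i}(r\theta_{i})(R^{(r)}_{e} \wedge 1/r)$ is at most $O(r) \cdot (R^{(r)}_{e} \wedge 1/r)$. For the pointwise statement one argues on a fixed sample point: $R^{(r)}_{e}$ is a fixed finite number (the excerpt treats $R^{(r)} = (R^{(r)}_{e},R^{(r)}_{s})$ as the stationary random vector, so along the countable sequence of $r$'s each coordinate is realized), hence $R^{(r)}_{e} \wedge 1/r \to R^{(r)}_{e}$ would be the wrong reading — rather, the cleanest route is: $\bigl|\eta^{(r)}_{i}(r\theta_{i})\bigr| \le C_{i} r$ for small $r$ and some constant $C_{i}$, so $\bigl|\eta^{(r)}_{i}(r\theta_{i})(R^{(r)}_{e} \wedge 1/r)\bigr| \le C_{i} r (R^{(r)}_{e} \wedge 1/r) \le C_{i} (r R^{(r)}_{e} \wedge 1) $, and since $r R^{(r)}_{e} \to 0$ pointwise (as $R^{(r)}_{e}$ has a nondegenerate limiting law by uniform integrability of its square, hence is $O_{p}(1)$), the exponent tends to $0$, giving $g^{(r)}_{i,r\theta_{i}}(R^{(r)}) \to 1$.

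\medskip

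Next, for the convergence in expectation: \mylem{2LQ-eta-zeta 1}, together with the observation already recorded just before the corollary, gives the uniform domination $g^{(r)}_{i,r\theta_{i}}(R^{(r)}) \le e^{|\theta_{i}|(d_{e,i}+d_{s})}$ for all sufficiently small $r$, where the bound is a finite constant not depending on $r$ (here we use $\theta_{1} \in \dd{R}$, $\theta_{2} \le 0$ and $r|\theta_{i}| < a$; in fact by \eq{2LQ-eta-zeta 1} the exponent is bounded in absolute value by $|\theta_{i}|(d_{e,i}(rR^{(r)}_{e} \wedge 1) + d_{s}(rR^{(r)}_{s}\wedge 1)) \le |\theta_{i}|(d_{e,i}+d_{s})$). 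Since constants are trivially integrable under any of the three probability measures $\dd{P}$, $\dd{P}^{(r)}_{e}$, $\dd{P}^{(r)}_{s}$, the dominated convergence theorem applies and
\begin{align*}
  \lim_{r \downarrow 0} \dd{E}\bigl[g^{(r)}_{i,r\theta_{i}}(R^{(r)})\bigr] = \dd{E}[1] = 1, \qquad \lim_{r \downarrow 0} \dd{E}_{u}\bigl[g^{(r)}_{i,r\theta_{i}}(R^{(r)})\bigr] = 1, \quad u = e, s,
\end{align*}
which is exactly \eq{2LQ-limit 1}.

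\medskip

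\textbf{Main obstacle.} The delicate point is not the domination (which \mylem{2LQ-eta-zeta 1} hands us directly) but justifying that $r R^{(r)}_{e} \to 0$ and $r R^{(r)}_{s} \to 0$ in the sense needed for pointwise convergence of the integrand. Under the Palm measures $\dd{P}^{(r)}_{e}$ and $\dd{P}^{(r)}_{s}$, the remaining-time coordinates have the size-biased (equilibrium) distributions rather than the stationary ones, so one must check that the third moment bound (\sect{sequence}.a) — equivalently the uniform integrability (\sect{sequence}.a') of the squares — still forces $R^{(r)}_{e}$ and $R^{(r)}_{s}$ to be tight (i.e. $O_{p}(1)$) under these Palm measures, whence $rR^{(r)}_{e}$ and $rR^{(r)}_{s}$ vanish in probability and one passes to a.s.\ convergent subsequences. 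Once tightness under all three measures is in hand, the truncation at $1/r$ becomes immaterial in the limit and the rest is routine. I would therefore spend the bulk of the argument verifying this tightness, invoking the standard inspection-paradox computation for the equilibrium distribution of an inter-event time, controlled by the uniformly bounded third moments in (\sect{sequence}.a).
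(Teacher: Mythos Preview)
Your proposal is correct and follows essentially the same route as the paper: use the uniform bound $g^{(r)}_{i,r\theta_i}(R^{(r)}) \le e^{|\theta_i|(d_{e,i}+d_s)}$ from \eq{2LQ-eta-zeta 1}, then argue that the exponent tends to zero so bounded convergence delivers \eq{2LQ-limit 1}. The paper records only the bound and treats the rest as immediate; you are right that the missing ingredient is control on $R^{(r)}$ (tightness or first-moment bounds) under each of the three measures, and indeed the paper supplies exactly those bounds in \mylem{2LQ-basic 2} just a few lines later --- so the logical ordering in the paper is slightly loose, and your plan to derive the needed bounds directly from (\sect{sequence}.a) via a BAR/inspection-paradox computation is the natural fix.

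One small technical caution: your appeal to the dominated convergence theorem is not quite standard, because the Palm measures $\dd{P}^{(r)}_{e}, \dd{P}^{(r)}_{s}$ themselves vary with $r$. The cleanest formulation avoids DCT altogether: from $|e^{x}-1|\le |x|e^{|x|}$ and \eq{2LQ-eta-zeta 1} you get
\[
\bigl|\dd{E}_{u}[g^{(r)}_{i,r\theta_i}(R^{(r)})] - 1\bigr| \le e^{|\theta_i|(d_{e,i}+d_s)}\,|\theta_i|\bigl(d_{e,i}\,\dd{E}_{u}[rR^{(r)}_{e}\wedge 1] + d_{s}\,\dd{E}_{u}[rR^{(r)}_{s}\wedge 1]\bigr),
\]
and then it suffices to show $\sup_r \dd{E}_{u}[R^{(r)}_{e}] < \infty$ and $\sup_r \dd{E}_{u}[R^{(r)}_{s}] < \infty$ for $u\in\{\cdot,e,s\}$, which is exactly the content of \mylem{2LQ-basic 2}. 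This sidesteps the subsequence/a.s.\ convergence detour you sketch. Also note that under $\dd{P}$ the arrival process is not a pure renewal process (its law switches at level $\ell^{(r)}_0$), so the ``standard inspection-paradox computation'' needs the BAR-based version in \mylem{2LQ-basic 2} rather than the textbook renewal identity.
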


We next introduce auxiliary functions $\psi^{(r)}_{i}(\theta_{i})$ for $i=1,2$ as
\begin{align}
\label{eq:phi-theta-r}
 & \psi^{(r)}_{i}(\theta_{i}) = \dd{E}\left[e^{r\theta_{i} L^{(r)}}g^{(r)}_{i,r \theta_{i}}(R^{(r)}) w^{(r)}_{i}(L^{(r)}) \right], \qquad \theta_{1} \in \dd{R}, \theta_{2} \le 0.
\end{align}
Then, by \mylem{2LQ-eta-zeta 1} and \mycor{2LQ-limit 1}, we have
\begin{align}
\label{eq:2LQ-finite 1}
  \psi^{(r)}_{i}(\theta_{i}) < \infty, \quad \dd{E}_{u}\left[f^{(r)}_{r\vc{\theta}}(L^{(r)},R^{(r)})\right] < \infty, \quad u=e,s, \quad \mbox{uniformly in $r \in (0,1]$},
\end{align}
because $r \theta_{1} L^{(r)}1(L^{(r)} \le \ell^{(r)}_{1}) \le \theta_{1} (\ell + o(r))$ for $\theta_{1} > 0$. These finiteness guarantees our computations for $\theta_{1} \in \dd{R}$ and $\theta_{2} \le 0$. We also note the following fact.
\begin{corollary}
\label{mycor:2LQ-psi-phi-diff}
For $\theta_{1} \in \dd{R}$ and $\theta_{2} \le 0$
\begin{align}
\label{eq:2LQ-psi-phi-diff 2}
  \lim_{r \downarrow 0} \left|\psi^{(r)}_{i}(\theta_{i}) - \varphi^{(r)}_{i}(\theta_{i})\right| = 0, \quad i=1,2.
\end{align}
\end{corollary}
\begin{proof}
By \mycor{2LQ-limit 1}, we have
\begin{align*}
  \left|\psi^{(r)}_{i}(\theta_{i}) - \varphi^{(r)}_{i}(\theta_{i})\right| & = \left|\dd{E}[e^{r\theta_{i} L^{(r)}}(g_{1,\theta_{i}}(R^{(r)}_{e}) - 1)1( L^{(r)} \le \ell^{(r)}_{1})]\right| \nonumber\\
  & \le e^{|\theta_{1}| r \ell^{(r)}_{1}} \left|\dd{E}[g_{i,\theta_{i}}(R^{(r)}_{e}) - 1]\right| \to 0, \qquad r \downarrow 0.
\end{align*}
This proves \eq{2LQ-psi-phi-diff 2}.
\end{proof}

Based on these facts, we compute $\dd{E}\left[ \sr{H}f(X^{(r)}(0))\right]$ in \eq{BAR-1} for $f^{(r)}_{r\vc{\theta}}$. Then, 
\begin{align}
\label{eq:2LQ-H 1}
   & \dd{E}\left[ \sr{H}f^{(r)}_{r\vc{\theta}}(X^{(r)}(0))\right] = \sum_{i=1,2} \eta_{i}^{(r)}(r\theta_{i}) \psi^{(r)}_{i}(\theta_{i}) \nonumber\\
 & \qquad + \sum_{i=1,2} c^{(r)}_{i} \zeta^{(r)} (r\theta_{i}) \psi^{(r)}_{i}(\theta_{i}) - c^{(r)}_{1} \zeta^{(r)}(r\theta_{1}) \dd{E}\left[1(L^{(r)}=0) g^{(r)}_{1,r \theta_{1}}(R^{(r)}) )\right].
\end{align}
Since $\lambda^{(r)}_{i} - c^{(r)}_{i} \mu^{(r)} = - b_{i} \mu^{(r)} r + o(r)$, it follows from \mylem{2LQ-eta-zeta 1}, $\beta_{i} = 2b_{i}/(c_{i} \sigma_{i}^{2})$ and $\lambda_{i} = c_{i} \mu$ that, each fixed $\theta_{i} \in \dd{R}$,
\begin{align*}
 \eta^{(r)}_{i}(r\theta_{i}) + c^{(r)}_{i} \zeta^{(r)}(r\theta_{i}) & = - b_{i} r^{2} \mu \theta_{i} + \frac 12 c_{i} (\lambda_{i}^{2} \sigma_{e,i}^{2} + \mu^{2} \sigma_{s}^{2}) r^{2} \mu \theta_{i}^{2} + o((\theta_{i} r)^{2})\\
  & = - \frac 12 c_{i} \sigma_{i}^{2} (\beta_{i} - \theta_{i}) \mu \theta_{i} r^{2} + o((\theta_{i} r)^{2}), \qquad r \downarrow 0.
\end{align*}
Hence, from \eq{2LQ-H 1}, we have
\begin{align}
\label{eq:2LQ-H 2}
 & \dd{E}\left[ \sr{H}f^{(r)}_{r\vc{\theta}}(X^{(r)}(0))\right] = \frac 12 \sum_{i=1,2} c_{i} \sigma_{i}^{2} \left(- \beta_{i} + \theta_{i} \right) \mu \theta_{i} r^{2} \psi^{(r)}_{i}(\theta_{i}) \nonumber\\
 & \quad + c^{(r)}_{1} \left(1 - \frac 12 \mu^{2} \sigma_{s}^{2} r \theta_{1}\right) \mu r \theta_{1} \dd{E}\left[1(L^{(r)}=0) g^{(r)}_{1,r \theta_{1}}(R^{(r)})\right] + o((\theta_{1} r)^{2}) + o((\theta_{2} r)^{2}).
\end{align}

We next compute the Palm expectation terms in \eq{BAR-1}. In what follows, we use the following abbreviated notations.
\begin{align}
\label{eq:2LQ-T-simple}
 & \widehat{T}^{(r)}_{e,i} \equiv T^{(r)}_{e,i} \wedge 1/r, \quad i=1,2, \qquad \widehat{T}^{(r)}_{s} \equiv T^{(r)}_{s} \wedge 1/r,\\
\label{eq:2LQ-R-simple}
 & \widehat{R}^{(r)}_{e} \equiv R^{(r)}_{e}(0) \wedge 1/r, \qquad \widehat{R}^{(r)}_{s} \equiv R^{(r)}_{s}(0) \wedge 1/r, \qquad \widehat{R}^{(r)}_{s-} \equiv R^{(r)}_{s}(0-) \wedge 1/r.
\end{align}
Since $g^{(r)}_{i,\theta_{i}}(R^{(r)}(t))$ changes at $t=0$ only when $L^{(r)}(0-) = \ell^{(r)}_{1}$ under $\dd{P}^{(r)}_{e}$ or $L^{(r)}(0) = \ell^{(r)}_{1}$ under $\dd{P}^{(r)}_{s}$, \eq{2LQ-boundary 1}  implies that
\begin{align*}
 & \dd{E}^{(r)}_{e}\left[ \Delta_{e} f^{(r)}_{r\vc{\theta}}(X^{(r)})(0) 1(L^{(r)}(0-) \not= \ell^{(r)}_{1})\right] = 0,\\
 & \dd{E}^{(r)}_{s}\left[\Delta_{s} f^{(r)}_{r\vc{\theta}}(X^{(r)})(0) 1(L^{(r)}(0) \not= \ell^{(r)}_{1})\right] = 0.
\end{align*}
Hence, from the definitions of $X^{(r)}_{1}(0)$ and $\Delta_{e} f^{(r)}_{r\vc{\theta}}(X^{(r)})(0)$, we have
\begin{align}
\label{eq:2LQ-df 1}
 & \dd{E}^{(r)}_{e}[\Delta_{e} f^{(r)}_{r\vc{\theta}}(X^{(r)})(0)] = \dd{E}^{(r)}_{e}[(f^{(r)}_{r\vc{\theta}}(X^{(r)}_{1}(0)) - f^{(r)}_{r\vc{\theta}}(X^{(r)}(0-))) 1(L^{(r)}(0-) = \ell^{(r)}_{1})] \nonumber\\
  & \quad = \dd{E}^{(r)}_{e} \Big[e^{r\theta_{2} (\ell^{(r)}_{1} + 1)} g^{(r)}_{2,r\theta_{2}}(R^{(r)}_{1}(0))1(L^{(r)}_{1}(0) = \ell^{(r)}_{1}+1)  \nonumber\\
  & \hspace{15ex} - e^{r\theta_{1} \ell^{(r)}_{1}} g^{(r)}_{1,r\theta_{1}}(R^{(r)}(0-))) 1(L^{(r)}(0-) = \ell^{(r)}_{1}) \Big] \nonumber\\
 & \quad= e^{r\theta_{2} \ell^{(r)}_{1}} \dd{E}^{(r)}_{e}[e^{-\zeta^{(r)}(r\theta_{2}) \widehat{R}^{(r)}_{s-}} 1(L^{(r)}(0-) = \ell^{(r)}_{1})]  \nonumber\\
  & \hspace{15ex} - e^{r\theta_{1} \ell^{(r)}_{1}} \dd{E}^{(r)}_{e}[e^{-\zeta^{(r)}(r\theta_{1}) \widehat{R}^{(r)}_{s-}} 1(L^{(r)}(0-) = \ell^{(r)}_{1}))], \nonumber\\
  & \quad = (e^{r\theta_{2} \ell^{(r)}_{1}} - e^{r\theta_{1}\ell^{(r)}_{1}}) \dd{E}^{(r)}_{e}[e^{-\zeta^{(r)}(r\theta_{2}) \widehat{R}^{(r)}_{s-}} 1(L^{(r)}(0-) = \ell^{(r)}_{1})]  \nonumber\\
  & \qquad + e^{r\theta_{1} \ell^{(r)}_{1}} \dd{E}^{(r)}_{e}[(e^{-\zeta^{(r)}(r\theta_{2}) \widehat{R}^{(r)}_{s-}} - e^{-\zeta^{(r)}(r\theta_{1}) \widehat{R}^{(r)}_{s-}}) 1(L^{(r)}(0-) = \ell^{(r)}_{1}))],
\end{align}
where the third equality is obtained by applying \eq{2LQ-boundary 1}. Similarly, we have
\begin{align}
\label{eq:2LQ-df 2}
 & \dd{E}^{(r)}_{s}[\Delta_{s} f^{(r)}_{r\vc{\theta}}(X^{(r)})(0)]  = \dd{E}^{(r)}_{s}[(f^{(r)}_{r\vc{\theta}}(X^{(r)}(0)) - f^{(r)}_{r\vc{\theta}}(X^{(r)}_{1}(0))) 1(L^{(r)}(0) = \ell^{(r)}_{1})]\nonumber\\
  & \quad = \dd{E}^{(r)}_{s} \Big[e^{r\theta_{1} \ell^{(r)}_{1}} g^{(r)}_{1,r\theta_{1}}(R^{(r)}(0))1(L^{(r)}(0) = \ell^{(r)}_{1}) \nonumber\\
  & \qquad - e^{r\theta_{2} (\ell^{(r)}_{1}+1)} g^{(r)}_{2,r\theta_{2}}(R^{(r)}_{1}(0))) 1(L^{(r)}_{1}(0-) = \ell^{(r)}_{1}+1) \Big] \nonumber\\
 & \quad = e^{r\theta_{1} (\ell^{(r)}_{1}+1)} \dd{E}^{(r)}_{s}[e^{-\eta^{(r)}_{1}(r\theta_{1}) \widehat{R}^{(r)}_{e}} 1(L^{(r)}(0) = \ell^{(r)}_{1})] \nonumber\\
 & \qquad - e^{r\theta_{2} (\ell^{(r)}_{1}+1)} \dd{E}^{(r)}_{s}[e^{-\eta^{(r)}_{2}(r\theta_{2}) \widehat{R}^{(r)}_{e}} 1(L^{(r)}(0) = \ell^{(r)}_{1})]  \nonumber\\
 & \quad = (e^{r\theta_{1} (\ell^{(r)}_{1}+1)} - e^{r\theta_{2} (\ell^{(r)}_{1}+1)}) \dd{E}^{(r)}_{s}[e^{-\eta^{(r)}_{1}(r\theta_{1}) \widehat{R}^{(r)}_{e}} 1(L^{(r)}(0) = \ell^{(r)}_{1})]  \nonumber\\
  & \qquad + e^{r\theta_{2} (\ell^{(r)}_{1}+1)} \dd{E}^{(r)}_{s}[(e^{-\eta^{(r)}_{1}(r\theta_{1}) \widehat{R}^{(r)}_{e}} - e^{-\eta^{(r)}_{2}(r\theta_{2}) \widehat{R}^{(r)}_{e}}) 1(L^{(r)}(0) = \ell^{(r)}_{1})].
\end{align}

Define $D^{(r)}(\theta_{1},\theta_{2}) = \alpha^{(r)}_{e} \left( \dd{E}^{(r)}_{e}\left[\Delta_{e} f^{(r)}_{r\vc{\theta}}(X^{(r)})(0) \right] + \dd{E}^{(r)}_{s}\left[\Delta_{s} f^{(r)}_{r\vc{\theta}}(X^{(r)})(0) \right] \right)$. Then, by \eq{2LQ-df 1} and \eq{2LQ-df 2},
\begin{align}
\label{eq:2LQ-D 1}
   & D^{(r)}(\theta_{1},\theta_{2}) = \alpha^{(r)}_{e} (e^{r\theta_{2} \ell^{(r)}_{1}} - e^{r\theta_{1} \ell^{(r)}_{1}}) \dd{E}^{(r)}_{e}[e^{-\zeta^{(r)}(r\theta_{2}) \widehat{R}^{(r)}_{s-})} 1(L^{(r)}(0-) = \ell^{(r)}_{1})]  \nonumber\\
  & \quad + \alpha^{(r)}_{e} e^{r\theta_{1} \ell^{(r)}_{1}} \dd{E}^{(r)}_{e}[(e^{-\zeta^{(r)}(r\theta_{2}) \widehat{R}^{(r)}_{s-}} - e^{-\zeta^{(r)}(r\theta_{1}) \widehat{R}^{(r)}_{s-}}) 1(L^{(r)}(0-) = \ell^{(r)}_{1})] \nonumber\\
  & \quad + \alpha^{(r)}_{e} (e^{r\theta_{1} (\ell^{(r)}_{1}+1)} - e^{r\theta_{2} (\ell^{(r)}_{1} + 1)}) \dd{E}^{(r)}_{s}[e^{-\eta^{(r)}_{1}(r\theta_{1}) \widehat{R}^{(r)}_{e}} 1(L^{(r)}(0) = \ell^{(r)}_{1})]  \nonumber\\
  & \quad + \alpha^{(r)}_{e} e^{r\theta_{2} (\ell^{(r)}_{1} + 1)} \dd{E}^{(r)}_{s}[(e^{-\eta^{(r)}_{1}(r\theta_{1}) \widehat{R}^{(r)}_{e}} - e^{-\eta^{(r)}_{2}(r\theta_{2}) \widehat{R}^{(r)}_{e}}) 1(L^{(r)}(0) = \ell^{(r)}_{1})].
\end{align}

Thus, applying \eq{2LQ-H 2}, \eq{2LQ-df 1} and \eq{2LQ-df 2} to \eq{BAR-1}, we have a pre-limit BAR for the 2-level $GI/G/1$ queue.

\begin{lemma}
\label{mylem:2LQ-BAR1}
For each fixed $\theta_{1} \in \dd{R}$ and $\theta_{2} \le 0$, we have, as $r \downarrow 0$,
\begin{align}
\label{eq:2LQ-BAR1}
 & \frac 12 \sum_{i=1,2} c_{i} \sigma_{i}^{2} \big((\beta_{i} - \theta_{i}) \mu \theta_{i} r^{2} + o((\theta_{i} r)^{2})\big) \psi^{(r)}_{i}(\theta_{i}) \nonumber\\
 & \quad - c_{1} \left(\mu r\theta_{1} - \mu^{3} \sigma_{s}^{2} r^{2} \theta_{1}^{2}/2 + o((\theta_{1} r)^{2})\right)  \dd{E}[ 1(L^{(r)} = 0) g^{(r)}_{1, r \theta_{1}}(R^{(r)})] = D^{(r)}(\theta_{1},\theta_{2}).
\end{align}
\end{lemma}

This BAR is called an asymptotic BAR, and a starting point of our proof of \mythr{2LQ-main}.

\subsection{Expansion of the asymptotic BAR}
\label{mysec:expansion}

We aim to derive the limits of $\psi^{(r)}_{1}(\theta_{1})$ and $\psi^{(r)}_{2}(\theta_{2})$ from the BAR \eq{2LQ-BAR1}. For this, we will divide it for either $\theta_{2} = 0$ or $\theta_{1} = 0$ by $r^{2}$, and take its limit as $r \downarrow 0$, expanding the exponential functions in \eq{2LQ-BAR1}. We perform this computations in three steps. Recall that (\sect{sequence}.a)--(\sect{sequence}.e) are assumed but (\sect{sequence}.f) is not assumed throughout \sectn{2LQ-BAR}. \medskip

\noindent{({\bf Step 1})} $\;$ In this step, we show that the moments of $\widehat{T}^{(r)}_{v}$ for $v=(e,1), (e,2), s$ and $\widehat{R}^{(r)}_{v'}$ for $v'=e,s$ are well behaved as $r \downarrow 0$. This is also examined for $\widehat{R}^{(r)}_{s-} 1(L^{(r)} = \ell^{(r)}_{1})$ and $\widehat{R}^{(r)}_{e} 1(L^{(r)} = \ell^{(r)}_{0-})$ under $\dd{P}^{(r)}_{e}$ and $\dd{P}^{(r)}_{s}$, respectively. These are basis for the expansion of the BAR. We then introduce the set of key quantities, $\Delta^{(r)}_{i}$ for $i=1,2$.

\begin{lemma}
\label{mylem:2LQ-basic-2}
For $k =0,1,2$ and $r \in (0,1]$,
\begin{align}
\label{eq:2LQ-Te 1}
 & (k+1) \dd{E}[(R^{(r)}_{e})^{k}] \le \alpha^{(r)}_{e} \max_{i=1,2} \dd{E}[(T^{(r)}_{e,i})^{k+1}], \\
\label{eq:2LQ-Ts 1}
 & (k+1) (\min_{i=1,2} c^{(r)}_{i}) \dd{E}[(R^{(r)}_{s})^{k}] \le \alpha^{(r)}_{e} \dd{E}[(T^{(r)}_{s})^{k+1}] + (k+1) (\min_{i=1,2} c^{(r)}_{i}) \dd{E}[T_{s}^{k}],
\end{align}
and, for $i=1,2$, as $r \downarrow 0$
\begin{align}
\label{eq:2LQ-Tu 1}
 & |\dd{E}[(\widehat{T}^{(r)}_{u})^{k}] - \dd{E}[(T^{(r)}_{u})^{k}]| = \dd{E}[(T^{(r)}_{u})^{k} 1(T^{(r)}_{u} > 1/r)] = o(r^{3-k}), \quad u = (e,i), s,\\
\label{eq:2LQ-Rv 1}
 & |\dd{E}[(\widehat{R}^{(r)}_{v})^{k}] - \dd{E}[(R^{(r)}_{v})^{k}]| = \dd{E}[(R^{(r)}_{v})^{k} 1(R^{(r)}_{v} > 1/r)] = o(r^{2-k}), \qquad v = e, s.
\end{align}
In particular, $\{(R^{(r)}_{e})^{k}; r \in (0,1]\}$ and $\{(R^{(r)}_{s})^{k} 1(L^{(r)}(0) \ge 1); r \in (0,1]\}$ are uniformly integrable for $k=1,2$ by (\sect{sequence}.a).
\end{lemma}
This lemma is easily proved using the BAR \eq{BAR-1}, but its proof is lengthy, so we defer it to \myapp{2LQ-basic-2}. We prepare one more lemma and its corollary, which are helpful for the expansion of the BAR.

\begin{lemma}
\label{mylem:2LQ-R1-1}
For $k=1,2,3$ and $r \in (0,1]$,
\begin{align}
\label{eq:2LQ-Rs 1}
 & c^{(r)}_{1} k \dd{E}[(R^{(r)}_{s})^{k-1} (1 - 1(R^{(r)}_{s} \ge 1/r)) 1(1\le L^{(r)} \le \ell^{(r)}_{1})] \nonumber\\
 & \quad = \alpha^{(r)}_{e} \Big( - \dd{E}^{(r)}_{e}[(\widehat{R}^{(r)}_{s-})^{k} 1(L^{(r)}(0-) = \ell^{(r)}_{1})] + \dd{E}[(\widehat{T}^{(r)}_{s})^{k}] \dd{P}^{(r)}_{s}[L^{(r)}(0) \le \ell^{(r)}_{1}]\Big),\\
\label{eq:2LQ-Rs 2}
 & c^{(r)}_{2} k \dd{E}[(R^{(r)}_{s})^{k-1} (1 - 1(R^{(r)}_{s} \ge 1/r)) 1( L^{(r)} > \ell^{(r)}_{1})]\nonumber\\
 & \quad = \alpha^{(r)}_{e} \Big( \dd{E}^{(r)}_{e}[(\widehat{R}^{(r)}_{s-})^{k} 1(L^{(r)}(0-) = \ell^{(r)}_{1})] + \dd{E}[(\widehat{T}^{(r)}_{s})^{k}] \dd{P}^{(r)}_{s}[L^{(r)}(0) > \ell^{(r)}_{1}]\Big),\\
\label{eq:2LQ-Re 1}
 & k \dd{E}[(R^{(r)}_{e})^{k-1} (1 - 1(R^{(r)}_{e} \ge 1/r)) 1(L^{(r)} \le \ell^{(r)}_{1})]  \nonumber\\
 & \quad = \alpha^{(r)}_{e}\Big( \dd{E}_{s}[(\widehat{R}^{(r)}_{e})^{k} 1(L^{(r)}(0) = \ell^{(r)}_{1})] + \dd{E}[(\widehat{T}^{(r)}_{e,1})^{k}] \dd{P}^{(r)}_{e}[L^{(r)}(0-) < \ell^{(r)}_{1}]\Big),\\
\label{eq:2LQ-Re 2}
 & k \dd{E}[(R^{(r)}_{e})^{k-1} (1 - 1(R^{(r)}_{e} \ge 1/r)) 1(L^{(r)} > \ell^{(r)}_{1})] \nonumber\\
 & \quad = \alpha^{(r)}_{e}\Big(- \dd{E}_{s}[(\widehat{R}^{(r)}_{e})^{k} 1(L^{(r)}(0) = \ell^{(r)}_{1})] + \dd{E}[(\widehat{T}^{(r)}_{e,2})^{k}] \dd{P}^{(r)}_{e}[L^{(r)}(0-) \ge \ell^{(r)}_{1}] \Big).
\end{align}
\end{lemma}
\begin{proof}
For $k \ge 1$, we first prove \eq{2LQ-Rs 1}. We apply $f(z,\vc{x}) = (x_{2} \wedge 1/r)^{k} 1(z \le \ell^{(r)}_{1})$ to \eq{BAR-1}, then we have \eq{2LQ-Rs 1} because
\begin{align*}
   \frac d{dt} f(X^{(r)}(t)) = k (R^{(r)}_{s}(t))^{k-1} 1(R^{(r)}_{s} < 1/r, 1 \le L^{(r)}(t) \le \ell^{(r)}_{1}).
\end{align*}
Similarly, \eq{2LQ-Rs 2}, \eq{2LQ-Re 1} and \eq{2LQ-Re 2} are obtained from \eq{BAR-1} by applying $f(z,\vc{x}) = (x_{2} \wedge 1/r)^{k} 1(z > \ell^{(r)}_{1})$, $f(z,\vc{x}) = (x_{1} \wedge 1/r)^{k} 1(z \le \ell^{(r)}_{1})$ and $f(z,\vc{x}) = (x_{1} \wedge 1/r)^{k} 1(z > \ell^{(r)}_{1})$, respectively.
\end{proof}

The following corollary is immediate from \eq{2LQ-Rs 1}, \eq{2LQ-Re 2}, (\sect{sequence}.a) and \mylem{2LQ-basic-2}.

\begin{corollary}
\label{mycor:2LQ-Re-Rs 1}
For each $k=1,2, 3$, $\dd{E}_{e}[(\widehat{R}^{(r)}_{s-})^{k} 1(L^{(r)}(0-) = \ell^{(r)}_{1})]$ and $\dd{E}_{s}[(\widehat{R}^{(r)}_{e})^{k} 1(L^{(r)}(0) = \ell^{(r)}_{1})]$ are uniformly bounded for $r \in (0,1]$.
\end{corollary}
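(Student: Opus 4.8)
The two quantities are Palm expectations of nonnegative functions restricted to the boundary events $\{L^{(r)}(0-)=\ell^{(r)}_{0}\}$ and $\{L^{(r)}(0)=\ell^{(r)}_{0}\}$, so in principle it suffices to bound the corresponding unrestricted Palm moments; however it is cleaner to extract the restricted quantities themselves directly from the BAR \eq{BAR 1}, using test functions adapted to these boundary events. For each $k\in\{1,2\}$ I would take
\[
  f_{s,k}(\vc{x}) = (x_{3}\wedge(1/r))^{k}\, 1(x_{1}>\ell^{(r)}_{0}), \qquad
  f_{e,k}(\vc{x}) = (x_{2}\wedge(1/r))^{k}\, 1(x_{1}\le\ell^{(r)}_{0}), \qquad \vc{x}\in S,
\]
each of which is bounded, nonnegative and right-differentiable in $x_{2},x_{3}$, hence lies in $C_{b}^{p}(S)$, and plug it into \eq{BAR 1}.

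Take $f=f_{s,k}$. An arrival raises $L^{(r)}$ by one and leaves $R^{(r)}_{s}$ unchanged, so the half-line $\{x_{1}>\ell^{(r)}_{0}\}$ is entered only from $x_{1}=\ell^{(r)}_{0}$; hence $\Delta_{e}f(X^{(r)}(0))$ collapses to $(\widehat{R}^{(r)}_{s})^{k}\,1(L^{(r)}(0-)=\ell^{(r)}_{0})$, and $\dd{E}^{(r)}_{e}[\Delta_{e}f(X^{(r)}(0))]$ is exactly the first quantity in the corollary. At a service completion $R^{(r)}_{s}(0-)=0$, while the workload of the customer entering service is a truncated fresh copy of $T^{(r)}_{s}$, independent of the past under $\dd{P}^{(r)}_{s}$; therefore $\dd{E}^{(r)}_{s}[\Delta_{s}f(X^{(r)}(0))] = \dd{E}[(T^{(r)}_{s}\wedge(1/r))^{k}]\,\dd{P}^{(r)}_{s}(L^{(r)}(0)>\ell^{(r)}_{0}) \le \dd{E}[(T^{(r)}_{s})^{k}]$, which is uniformly bounded by (\sect{sequence}.a) since $k\le 2$. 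Finally $\sr{H}f(\vc{x}) = -c^{(r)}_{2}\,k\,(x_{3}\wedge(1/r))^{k-1}1(x_{3}<1/r)\,1(x_{1}>\ell^{(r)}_{0})$, so $|\dd{E}[\sr{H}f(X^{(r)}(0))]|\le c^{(r)}_{2}$ for $k=1$ and $\le 2c^{(r)}_{2}\,\dd{E}[R^{(r)}_{s}1(L^{(r)}\ge 1)]$ for $k=2$, both uniformly bounded by the uniform integrability in \mylem{2LQ-basic 2} (equivalently by \eq{2LQ-Ts 1} with $B=\dd{R}_{+}$) together with $c^{(r)}_{2}\to c_{2}$. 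Substituting into \eq{BAR 1}, using $\alpha^{(r)}_{e}=\alpha^{(r)}_{s}$ from \mylem{2LQ-basic 1}, and solving for the $\Delta_{e}$-term gives
\[
  \dd{E}^{(r)}_{e}\big[(\widehat{R}^{(r)}_{s})^{k}1(L^{(r)}(0-)=\ell^{(r)}_{0})\big]
  = -\frac{1}{\alpha^{(r)}_{e}}\Big( \dd{E}\big[\sr{H}f(X^{(r)}(0))\big] + \alpha^{(r)}_{e}\,\dd{E}^{(r)}_{s}\big[\Delta_{s}f(X^{(r)}(0))\big]\Big),
\]
whose right-hand side is uniformly bounded because $\alpha^{(r)}_{e}$ is bounded away from $0$ and from $\infty$ by \mycor{alpha 2}. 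The second statement follows identically from $f=f_{e,k}$: a departure lowers $L^{(r)}$ by one and leaves $R^{(r)}_{e}$ unchanged, so $\Delta_{s}f$ collapses to $(\widehat{R}^{(r)}_{e})^{k}1(L^{(r)}(0)=\ell^{(r)}_{0})$; $\Delta_{e}f$ yields a truncated fresh copy of $T^{(r)}_{e,1}$ times $1(L^{(r)}(0-)<\ell^{(r)}_{0})$, with uniformly bounded $k$-th moment by (\sect{sequence}.a); and $|\dd{E}[\sr{H}f(X^{(r)}(0))]|$ is at most $1$ for $k=1$ and at most $2\,\dd{E}[R^{(r)}_{e}]$ for $k=2$, uniformly bounded by \mylem{2LQ-basic 2}.

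The one point that needs care is the choice of the half-line level sets $\{x_{1}>\ell^{(r)}_{0}\}$ and $\{x_{1}\le\ell^{(r)}_{0}\}$: they are precisely the sets whose boundary is crossed by the unit jump of $L^{(r)}$ at an arrival, resp.\ the unit drop at a departure, so that after cancellation the relevant Palm term reduces to a single boundary term rather than a difference of two, while at the same time the conjugate Palm term picks up only a freshly sampled inter-event variable, whose first two moments are finite thanks to the third-moment bound in (\sect{sequence}.a). Everything else is routine: membership in $C_{b}^{p}(S)$, the elementary form of $\sr{H}$, the vanishing of $R^{(r)}_{s}$ at a service completion and of $R^{(r)}_{e}$ at an arrival, and the independence from the past, under the relevant Palm measure, of the inter-event variable freshly sampled at that instant.
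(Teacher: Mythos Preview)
Your proof is correct and is essentially the approach the paper has in mind. The paper says the corollary is ``immediate from \mylem{2LQ-basic 2}'' together with the third-moment bound (\sect{sequence}.a); what this actually means is the BAR computation you wrote out, using the test functions $(x_{3}\wedge 1/r)^{k}1(x_{1}>\ell^{(r)}_{0})$ and $(x_{2}\wedge 1/r)^{k}1(x_{1}\le\ell^{(r)}_{0})$, which are precisely the test functions the paper uses a few lines later in the proof of \mylem{2LQ-R 1} to derive \eq{2LQ-Rs 2} and \eq{2LQ-Re 1}, and then bounding the resulting expressions by the moment estimates of \mylem{2LQ-basic 2} and \mycor{alpha 2}.
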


Note that the limits of $\psi^{(r)}_{1}(\theta_{1})$ and $\psi^{(r)}_{2}(\theta_{2})$ should be obtained from \eq{2LQ-BAR1}. However, we will see that it requires the relation between the limits of $\varphi^{(r)}_{1}(0)$ and $\varphi^{(r)}_{2}(0)$, which is not easy to get from \eq{2LQ-BAR1}. To consider this problem, we introduce the following quantities. Define, for $i=1,2$,
\begin{align}
\label{eq:2LD-Di}
 \Delta^{(r)}_{i} & = \mu^{(r)} \dd{E}^{(r)}_{e}[\widehat{R}^{(r)}_{s-} 1(L^{(r)}(0-) = \ell^{(r)}_{1})]  \nonumber\\
 & \quad + \lambda^{(r)}_{i} \dd{E}^{(r)}_{s}[\widehat{R}^{(r)}_{e} 1(L^{(r)}(0) = \ell^{(r)}_{1})] - \dd{P}^{(r)}_{e}[L^{(r)}(0-) = \ell^{(r)}_{1}].
\end{align}
Then, $\Delta^{(r)}_{i}$ is well defined and finite by \mycor{2LQ-Re-Rs 1}, and we have

\begin{lemma}
\label{mylem:2LQ-R2-1}
\begin{align}
\label{eq:2LQ-R1-1}
 & \alpha^{(r)}_{e} \Delta^{(r)}_{1} = - rb_{1} \mu  \dd{P}[L^{(r)} \le \ell^{(r)}_{1}]  + c^{(r)}_{1}\mu^{(r)} \dd{P}[L^{(r)} = 0] + o(r),\\
\label{eq:2LQ-R2-1}
 & \alpha^{(r)}_{e} \Delta^{(r)}_{2} = r\mu b_{2} \dd{P}[L^{(r)} > \ell^{(r)}_{1}] + o(r).
\end{align}
\end{lemma}

\begin{proof}
For $k=1$, multiply both sides of \eq{2LQ-Rs 1} and \eq{2LQ-Re 1} by $\mu^{(r)}$ and $\lambda^{(r)}_{1}$, respectively, and take their difference, then, noting that $c^{(r)}_{1} \mu^{(r)} - \lambda^{(r)}_{1} = r \mu^{(r)} b_{1} + o(r)$, we have
\begin{align*}
 & (r \mu^{(r)} b_{1} + o(r)) \dd{P}[L^{(r)} \le \ell^{(r)}_{1}] - c^{(r)}_{1}\mu^{(r)} \dd{P}[L^{(r)} = 0]  \nonumber\\
 & \quad = \alpha^{(r)}_{e} \Big(\dd{P}^{(r)}_{e}[L^{(r)}(0-) = \ell^{(r)}_{1}]  \nonumber\\
 & \hspace{10ex} - \mu^{(r)} \dd{E}^{(r)}_{e}[\widehat{R}^{(r)}_{s-} 1(L^{(r)}(0-) = \ell^{(r)}_{1})] - \lambda^{(r)}_{1} \dd{E}_{s}^{(r)}[\widehat{R}^{(r)}_{e}1(L^{(r)}(0) = \ell^{(r)}_{1}]\Big) + o(r),
\end{align*}
because $\dd{P}^{(r)}_{s}[L^{(r)}(0) \le \ell^{(r)}_{1}] - \dd{P}^{(r)}_{e}[L^{(r)}(0-) < \ell^{(r)}_{1}] = \dd{P}^{(r)}_{e}[L^{(r)}(0-) = \ell^{(r)}_{1}]$ by \mylem{2LQ-basic-1}.
This proves \eq{2LQ-R1-1}. Similarly, \eq{2LQ-R2-1} is obtained from \eq{2LQ-Rs 2} and \eq{2LQ-Re 2},
\end{proof}

We will use \mylem{2LQ-R2-1} to determine the limits of $\dd{P}[L^{(r)} \le \ell^{(r)}_{1}]$ and $\dd{P}[L^{(r)} > \ell^{(r)}_{1}]$, equivalently, $\varphi^{(r)}_{1}(0)$ and $\varphi^{(r)}_{2}(0)$, as $r \downarrow 0$ under some conditions.\medskip

\noindent{({\bf Step 2})} $\;$ In this step, we expand $D^{(r)}(\theta,0)$ and $D^{(r)}(0,\theta)$ concerning $r$ as $r \downarrow 0$. We will see that $\Delta^{(r)}_{i}$'s describe the expansion terms of $D^{(r)}(\theta,0)$ of order $r$. We here will introduce $\sr{E}^{(r)}_{i}$'s for describing their expansion terms of order $r^{2}$. 

We start with the following elementary expansions. Recall \myrem{2LQ-eta-zeta 1} for our convention of the order terms.
\begin{lemma}
\label{mylem:2LQ-asymp 1}
For each fixed $\theta_{1}$ and $\theta_{2}$, as $r \downarrow 0$,
\begin{align}
\label{eq:2LQ-exp 1}
 & e^{r \theta_{i} \ell^{(r)}_{1}} = e^{\theta_{i} \ell_{1}} (1 + o(\theta_{i} r)), \quad e^{r\theta_{i} (\ell^{(r)}_{1}+1)} = e^{\theta_{i} \ell_{1}} (1 + \theta_{i}r + o(\theta_{i} r)), \quad i=1,2,\\
\label{eq:2LQ-exp 2}
 & e^{r\theta_{1}\ell^{(r)}_{1}} - e^{r\theta_{2} \ell^{(r)}_{1}} = e^{\theta_{1} \ell_{1}} - e^{\theta_{2} \ell_{1}} + e^{(\theta_{1} \vee \theta_{2}) \ell_{1}} ( o(\theta_{1} r) + o(\theta_{2} r)),\\
\label{eq:2LQ-exp 3}
 & e^{r\theta_{1}(\ell^{(r)}_{1}+1)} - e^{r\theta_{2} (\ell^{(r)}_{1}+1)}  \nonumber\\
 & \quad = e^{\theta_{1}\ell_{1}} - e^{\theta_{2} \ell_{1}} + r (e^{\theta_{1} \ell_{1}} \theta_{1} - e^{\theta_{2} \ell_{1}} \theta_{2}) + e^{(\theta_{1} \vee \theta_{2}) \ell_{1}} ( o(\theta_{1} r) + o(\theta_{2} r)).
\end{align}
\end{lemma}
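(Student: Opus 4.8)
The plan is to reduce all four identities to assumption (\sect{sequence}.d), which gives $r\ell^{(r)}_{0} = \ell_{0} + o(r)$ as $r \downarrow 0$, combined with the smoothness of the exponential function near the fixed point $\theta_{i}\ell_{0}$.

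First I would establish the first identity in \eq{2LQ-exp 1}. Since $\theta_{i}$ is fixed, write $r\theta_{i}\ell^{(r)}_{0} = \theta_{i}\,(r\ell^{(r)}_{0}) = \theta_{i}\ell_{0} + \theta_{i}\,o(r)$. Because $x \mapsto e^{x}$ is continuously differentiable, $e^{\theta_{i}\ell_{0} + \varepsilon} = e^{\theta_{i}\ell_{0}} + O(\varepsilon)$ as $\varepsilon \to 0$; applying this with $\varepsilon = \theta_{i}\,o(r) = o(r)$ yields $e^{r\theta_{i}\ell^{(r)}_{0}} = e^{\theta_{i}\ell_{0}} + o(r)$. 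For the second identity in \eq{2LQ-exp 1}, factor $e^{r\theta_{i}(\ell^{(r)}_{0}+1)} = e^{r\theta_{i}\ell^{(r)}_{0}}\, e^{r\theta_{i}}$, insert the estimate just obtained for the first factor and the first-order Taylor expansion $e^{r\theta_{i}} = 1 + r\theta_{i} + o(r)$ for the second, and multiply out: the product $e^{\theta_{i}\ell_{0}}(1 + r\theta_{i})$ is the stated main term, while every cross term is a product of a bounded quantity with an $o(r)$ quantity, hence $o(r)$.

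The remaining two identities then follow by subtraction. Equation \eq{2LQ-exp 2} is the difference of the first identity in \eq{2LQ-exp 1} for $i=1$ minus the same for $i=2$, the two $o(r)$ remainders combining to $o(r)$. Equation \eq{2LQ-exp 3} is obtained in the same way from the second identity in \eq{2LQ-exp 1}, collecting the order-$r$ contributions into $r\,(e^{\theta_{1}\ell_{0}}\theta_{1} - e^{\theta_{2}\ell_{0}}\theta_{2})$.

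There is essentially no obstacle in this lemma; the only point requiring a little care is the bookkeeping of the remainders — specifically that multiplying an $o(r)$ term by a quantity bounded uniformly in $r$ (such as $e^{\theta_{i}\ell_{0}}$, $1 + r\theta_{i}$, or a convergent sequence of constants) leaves it $o(r)$, and that a finite sum of $o(r)$ terms is $o(r)$. Both facts are immediate from the definition of $o(\cdot)$ recalled just after (\sect{sequence}.e), so I would state them in passing and not belabor them.
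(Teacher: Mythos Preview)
Your proposal is correct and follows essentially the same approach as the paper: both arguments reduce everything to assumption (\sect{sequence}.d) together with a first-order Taylor expansion of the exponential, and both obtain \eq{2LQ-exp 2} and \eq{2LQ-exp 3} by subtracting the two instances of \eq{2LQ-exp 1}. The only cosmetic difference is that the paper writes $e^{r\theta_{i}(\ell^{(r)}_{0}+1)} = e^{\theta_{i}(\ell_{0}+r\theta_{i}+o(r))}$ and expands inside the exponent, whereas you factor as $e^{r\theta_{i}\ell^{(r)}_{0}}\,e^{r\theta_{i}}$ and expand each factor separately; the bookkeeping is equivalent.
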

\begin{lemma}
\label{mylem:2LQ-asymp 2}
Let $S_{n}(x) = \sum_{k=1}^{n} x^{k}$ for $x \in \dd{R}$ and integer $n \ge 1$, then, for each fixed $\theta_{1}$ and $\theta_{2}$, as $r \downarrow 0$, for $i=1,2$,
\begin{align}
\label{eq:2LQ-exp-eta 1}
 & e^{-\eta^{(r)}_{i}(r\theta_{i}) \widehat{R}^{(r)}_{e}} = 1 - r\theta_{i} (\lambda^{(r)}_{i} \widehat{R}^{(r)}_{e}) - 2^{-1} (r\theta_{i})^{2} [ (\lambda^{(r)}_{i} \sigma^{(r)}_{e,i})^{2} (\lambda^{(r)}_{i} \widehat{R}^{(r)}_{e}) - (\lambda^{(r)}_{i} \widehat{R}^{(r)}_{e})^{2} ] \nonumber\\
 & \qquad  + S_{3}(\widehat{R}^{(r)}_{e}) o((r\theta)^{2}),\\
\label{eq:2LQ-exp-zeta 1}
 & e^{-\zeta^{(r)}(r\theta_{i}) \widehat{R}^{(r)}_{s-}} = 1 + r\theta_{i} (\mu^{(r)} \widehat{R}^{(r)}_{s-}) - 2^{-1} (r\theta_{i})^{2} [ (\mu^{(r)} \sigma^{(r)}_{s})^{2} (\mu^{(r)} \widehat{R}^{(r)}_{s-}) - (\mu^{(r)} \widehat{R}^{(r)}_{s-})^{2} ] \nonumber\\
 & \qquad + S_{3}(\widehat{R}^{(r)}_{s-}) o((r\theta)^{2}),\\
\label{eq:2LQ-exp-eta 2}
 & e^{-\eta^{(r)}_{1}(r\theta_{1}) \widehat{R}^{(r)}_{e}} - e^{-\eta^{(r)}_{2}(r\theta_{2}) \widehat{R}^{(r)}_{e}} \nonumber\\
 & \quad = r (\theta_{2} \lambda^{(r)}_{2} - \theta_{1} \lambda^{(r)}_{1}) \widehat{R}^{(r)}_{e} +  2^{-1} r^{2} [\theta_{2}^{2} (\lambda^{(r)}_{2} \sigma^{(r)}_{e,2})^{2} \lambda^{(r)}_{2} - \theta_{1}^{2} (\lambda^{(r)}_{1} \sigma^{(r)}_{e,1})^{2} \lambda^{(r)}_{1}] \widehat{R}^{(r)}_{e} \nonumber\\
 & \qquad - 2^{-1} r^{2} [\theta_{2}^{2} (\lambda^{(r)}_{2})^{2} - \theta_{1}^{2} (\lambda^{(r)}_{1})^{2}] (\widehat{R}^{(r)}_{e})^{2} + S_{3}(\widehat{R}^{(r)}_{e}) o((r\theta)^{2}),\\
\label{eq:2LQ-exp-zeta 2}
 & e^{-\zeta^{(r)}(r\theta_{2}) \widehat{R}^{(r)}_{s-}} - e^{-\zeta^{(r)}(r\theta_{1}) \widehat{R}^{(r)}_{s-}} \nonumber\\
 & \quad = r (\theta_{2} - \theta_{1}) (\mu^{(r)} \widehat{R}^{(r)}_{s-}) - 2^{-1} r^{2} (\theta_{2}^{2} - \theta_{1}^{2}) [(\mu^{(r)} \sigma^{(r)}_{s})^{2} (\mu^{(r)} \widehat{R}^{(r)}_{s-}) - (\mu^{(r)} \widehat{R}^{(r)}_{s-})^{2}] \nonumber\\
 & \qquad + S_{3}(\widehat{R}^{(r)}_{s-}) o((r\theta)^{2}).
\end{align}
\end{lemma}

These lemmas are proved in \myapp{2LQ-asymp}. We first compute $D^{(r)}(\theta,0)$.
\begin{align}
\label{eq:2LQ-D10-1}
   D^{(r)}(\theta,0) & = \alpha^{(r)}_{e} (1 - e^{r\theta \ell^{(r)}_{1}}) \dd{P}^{(r)}_{e}[L^{(r)}(0-) = \ell^{(r)}_{1}]  \nonumber\\
  & \qquad + \alpha^{(r)}_{e} e^{r\theta \ell^{(r)}_{1}} \dd{E}^{(r)}_{e}[(1 - e^{-\zeta^{(r)}(r\theta) \widehat{R}^{(r)}_{s-}}) 1(L^{(r)}(0-) = \ell^{(r)}_{1})] \nonumber\\
  & \qquad + \alpha^{(r)}_{e} (e^{r\theta (\ell^{(r)}_{1}+1)} - 1) \dd{E}^{(r)}_{s}[e^{-\eta^{(r)}_{1}(r\theta) \widehat{R}^{(r)}_{e}} 1(L^{(r)}(0) = \ell^{(r)}_{1})]  \nonumber\\
  & \qquad + \alpha^{(r)}_{e} \dd{E}^{(r)}_{s}[(e^{-\eta^{(r)}_{1}(r\theta) \widehat{R}^{(r)}_{e}} - 1) 1(L^{(r)}(0) = \ell^{(r)}_{1})] \nonumber\\
  & = - \alpha^{(r)}_{e} e^{r\theta \ell^{(r)}_{1}} \dd{E}^{(r)}_{e}[ e^{-\zeta^{(r)}(r\theta) \widehat{R}^{(r)}_{s-}} 1(L^{(r)}(0-) = \ell^{(r)}_{1})] \nonumber\\
  & \qquad + \alpha^{(r)}_{e} e^{r\theta \ell^{(r)}_{1}} e^{r \theta} \dd{E}^{(r)}_{s}[e^{-\eta^{(r)}_{1}(r\theta) \widehat{R}^{(r)}_{e}} 1(L^{(r)}(0) = \ell^{(r)}_{1})].
\end{align}

Define $\sr{E}^{(r)}_{1}$ as
\begin{align}
\label{eq:2LQ-E1}
   \sr{E}^{(r)}_{1} & = 2^{-1}\dd{E}^{(r)}_{e}[ \{(\mu^{(r)} \sigma^{(r)}_{s})^{2} (\mu^{(r)} \widehat{R}^{(r)}_{s-}) - (\mu^{(r)} \widehat{R}^{(r)}_{s-})^{2} \}1(L^{(r)}(0-) = \ell^{(r)}_{1})] \nonumber\\
 & \quad - 2^{-1} \dd{E}^{(r)}_{s}[  \{ ((\lambda^{(r)}_{1} \sigma^{(r)}_{e,1})^{2} + 2) (\lambda^{(r)}_{1} \widehat{R}^{(r)}_{e}) - (\lambda^{(r)}_{1} \widehat{R}^{(r)}_{e})^{2} \} 1(L^{(r)}(0) = \ell^{(r)}_{1})].
\end{align}
Applying \eq{2LQ-exp-zeta 1} and
\begin{align*}
 & e^{r \theta}  e^{-\eta^{(r)}_{1}(r\theta) \widehat{R}^{(r)}_{e}} = (1 + r \theta + o(r\theta)) \{1 - r\theta (\lambda^{(r)}_{1} \widehat{R}^{(r)}_{e}) \nonumber\\
 & \qquad - 2^{-1} (r\theta)^{2} [ (\lambda^{(r)}_{1} \sigma^{(r)}_{e,1})^{2} (\lambda^{(r)}_{1} \widehat{R}^{(r)}_{e}) - (\lambda^{(r)}_{1} \widehat{R}^{(r)}_{e})^{2} ] + o((r\theta)^{2})\} \nonumber\\
 & \quad = 1 - r\theta (-1 + \lambda^{(r)}_{1} \widehat{R}^{(r)}_{e}) - 2^{-1} (r\theta)^{2} [ ((\lambda^{(r)}_{1} \sigma^{(r)}_{e,1})^{2} + 2) (\lambda^{(r)}_{1} \widehat{R}^{(r)}_{e}) - (\lambda^{(r)}_{1} \widehat{R}^{(r)}_{e})^{2} ] \nonumber\\
 & \qquad + S_{3}(\widehat{R}^{(r)}_{e}) o((r\theta)^{2}),
\end{align*}
to \eq{2LQ-D10-1} and recalling the definition of $\Delta^{(r)}_{1}$, we have
\begin{align}
\label{eq:2LQ-D10-2}
 & D^{(r)}(\theta,0) = \alpha^{(r)}_{e} e^{\theta \ell_{1}} (- r\theta \Delta^{(r)}_{1} + (r \theta)^{2} \sr{E}^{(r)}_{1})  + o((r\theta)^{2}),
\end{align}
because $\dd{E}^{(r)}_{s}[S_{3}(\widehat{R}^{(r)}_{e}) 1(L^{(r)}(0) = \ell^{(r)}_{1})]$ is uniformly bounded for $r \in (0,1]$ by \mycor{2LQ-Re-Rs 1}.

Similarly, define $\sr{E}^{(r)}_{2}$ as
\begin{align}
\label{eq:2LQ-E2}
  \sr{E}^{(r)}_{2} & = - 2^{-1} \dd{E}^{(r)}_{e} [ (\mu^{(r)} \sigma^{(r)}_{s})^{2} (\mu^{(r)} \widehat{R}^{(r)}_{s-}) - (\mu^{(r)} \widehat{R}^{(r)}_{s-})^{2} ] 1(L^{(r)}(0-) = \ell^{(r)}_{1})] \nonumber\\
  & \quad + 2^{-1} \dd{E}^{(r)}_{s}[\{ (\lambda^{(r)}_{2} (\sigma^{(r)}_{e,2})^{2} + 2) (\lambda^{(r)}_{2} \widehat{R}^{(r)}_{e}) - (\lambda^{(r)}_{2} \widehat{R}^{(r)}_{e})^{2} \} 1(L^{(r)}(0) = \ell^{(r)}_{1})].
\end{align}
Then, $D^{(r)}(0,\theta)$ is computed from \eq{2LQ-D 1} as
\begin{align}
\label{eq:2LQ-D01-2}
  & D^{(r)}(0,\theta) = \alpha^{(r)}_{e} (e^{r\theta \ell^{(r)}_{1}} - 1) \dd{E}^{(r)}_{e}[e^{-\zeta^{(r)}(r\theta) \widehat{R}^{(r)}_{s})} 1(L^{(r)}(0-) = \ell^{(r)}_{1})]  \nonumber\\
  & \quad + \alpha^{(r)}_{e} \dd{E}^{(r)}_{e}[(e^{-\zeta^{(r)}(r\theta) \widehat{R}^{(r)}_{s}} - 1) 1(L^{(r)}(0-) = \ell^{(r)}_{1})] + \alpha^{(r)}_{e} (1 - e^{r\theta (\ell^{(r)}_{1} + 1)}) \dd{P}^{(r)}_{s}[L^{(r)}(0) = \ell^{(r)}_{1}]  \nonumber\\
  & \quad + \alpha^{(r)}_{e} e^{r\theta (\ell^{(r)}_{1} + 1)} \dd{E}^{(r)}_{s}[(1 - e^{-\eta^{(r)}_{2}(r\theta) \widehat{R}^{(r)}_{e}}) 1(L^{(r)}(0) = \ell^{(r)}_{1})] \nonumber\\
  & = \alpha^{(r)}_{e} e^{r\theta \ell^{(r)}_{1}} \big\{\dd{E}^{(r)}_{e}[e^{-\zeta^{(r)}(r\theta) \widehat{R}^{(r)}_{s}} 1(L^{(r)}(0-) = \ell^{(r)}_{1})]  - e^{r\theta}\dd{E}^{(r)}_{s}[e^{-\eta^{(r)}_{2}(r\theta) \widehat{R}^{(r)}_{e}}1(L^{(r)}(0) = \ell^{(r)}_{1})] \big\} \nonumber\\
  & = \alpha^{(r)}_{e} e^{\theta \ell_{1}} \big\{\dd{E}^{(r)}_{e}[1 + r\theta \mu^{(r)} \widehat{R}^{(r)}_{s-} \nonumber\\
  & \qquad - 2^{-1} (r \theta)^{2} [ (\mu^{(r)} \sigma^{(r)}_{s})^{2} (\mu^{(r)} \widehat{R}^{(r)}_{s-}) - (\mu^{(r)} \widehat{R}^{(r)}_{s-})^{2} ] 1(L^{(r)}(0-) = \ell^{(r)}_{1})] \nonumber\\
  & \quad - (1 + r \theta + o(r\theta)) \dd{E}^{(r)}_{s}[1 - r\theta (\lambda^{(r)}_{2} \widehat{R}^{(r)}_{e}) \nonumber\\
 & \qquad - 2^{-1} (r\theta)^{2} [ (\lambda^{(r)}_{2} \sigma^{(r)}_{e,2})^{2} (\lambda^{(r)}_{2} \widehat{R}^{(r)}_{e}) - (\lambda^{(r)}_{2} \widehat{R}^{(r)}_{e})^{2} ] 1(L^{(r)}(0) = \ell^{(r)}_{1})]\big\}  + o((r\theta)^{2}) \nonumber\\
 & = \alpha^{(r)}_{e} e^{\theta \ell_{1}} (r \theta \Delta^{(r)}_{2} + (r \theta)^{2} \sr{E}^{(r)}_{2}) + o((r\theta)^{2}).
\end{align}

\noindent{({\bf Step 3})} $\;$ In this step, we derive asymptotic expansions for $\psi^{(r)}_{i}(\theta_{i})$ for $i=1,2$. First, note that \eq{2LQ-R1-1} and \eq{2LQ-R2-1} can be written as
\begin{align}
\label{eq:2LQ-R1-2}
 & \alpha^{(r)}_{e} \Delta^{(r)}_{1} = - rb_{1} \mu \varphi^{(r)}_{1}(0)  + c^{(r)}_{1}\mu^{(r)} \dd{P}[L^{(r)} = 0] + o(r),\\
\label{eq:2LQ-R2-2}
 & \alpha^{(r)}_{e} \Delta^{(r)}_{2} = r\mu b_{2} \varphi^{(r)}_{2}(0) + o(r).
\end{align}
Then, substituting \eq{2LQ-D10-2} into the asymptotic BAR \eq{2LQ-BAR1} with $\theta_{2} = 0$ and applying \eq{2LQ-R1-2}, we have
\begin{align}
\label{eq:2LQ-BAR 4a}
 & \frac 12 r c_{1} \mu  \sigma_{1}^{2} \big(\beta_{1} - \theta_{1}\big) \psi^{(r)}_{1}(\theta_{1}) =  c_{1} \mu \dd{P}[L^{(r)} = 0] - e^{\theta_{1} \ell_{1}} \alpha^{(r)}_{e} (\Delta^{(r)}_{1} - r \theta_{1} \sr{E}^{(r)}_{1}) + \theta_{1} o(r), \nonumber\\
 & \quad = c_{1} \mu (1 - e^{\theta_{1} \ell_{1}}) \dd{P}[L^{(r)} = 0] + r e^{\theta_{1}\ell_{1}} \big( b_{1} \mu  \varphi^{(r)}_{1}(0) +  \theta_{1} \alpha^{(r)}_{e} \sr{E}^{(r)}_{1}\big) + \theta_{1} o(r),
\end{align}
Similarly, substituting \eq{2LQ-D01-2} into \eq{2LQ-BAR1} with $\theta_{1} = 0$ and applying \eq{2LQ-R2-2}, we have
\begin{align}
\label{eq:2LQ-BAR 4b}
 & \frac 12 r c_{2} \mu \sigma_{2}^{2} \big(\beta_{2} - \theta_{2}\big) \psi^{(r)}_{2}(\theta_{2}) = \alpha^{(r)}_{e} e^{\theta_{2}\ell_{1}} \big(\Delta^{(r)}_{2} + r \theta_{2} \sr{E}^{(r)}_{2} \big) + \theta_{2} o(r) \nonumber\\
 & \quad = r e^{\theta_{2}\ell_{1}} \big( \mu b_{2} \varphi^{(r)}_{2}(0) + \theta_{2} \alpha^{(r)}_{e} \sr{E}^{(r)}_{2} \big) + \theta_{2} o(r).
\end{align}

We next compute $\dd{P}[L^{(r)} = 0]$ for \eq{2LQ-BAR 4a}.
\begin{lemma}
\label{mylem:2LQ-L0-1}
\begin{align}
\label{eq:2LQ-L0-1}
  \dd{P}[L^{(r)} = 0] =  \begin{cases}
  \frac {r} {2 c_{1} \mu \ell_{1}} ( c_{1} \mu \sigma_{1}^{2} \varphi^{(r)}_{1}(0) + 2 \alpha^{(r)}_{e} \sr{E}^{(r)}_{1}) + o(r), & b= 0,\\
  \frac {r e^{\beta_{1} \ell_{1}}}{c_{1} \mu (e^{\beta_{1} \ell_{1}} - 1)} \big(b_{1} \mu \varphi^{(r)}_{1}(0) + \beta_{1} \alpha^{(r)}_{e} \sr{E}^{(r)}_{1} \big) + o(r),  & b \not= 0.
\end{cases}
\end{align}
\end{lemma}

\begin{proof}
From \eq{2LQ-BAR 4a}, we have
\begin{align}
\label{eq:2LQ-L0-2}
  \dd{P}[L^{(r)} = 0] & = r \frac {c_{1} \mu \sigma_{1}^{2} \big(\theta_{1} - \beta_{1}\big) \psi^{(r)}_{1}(\theta_{1}) + 2 e^{\theta_{1} \ell_{1}} (b_{1} \mu \varphi^{(r)}_{1}(0) + \theta_{1} \alpha^{(r)}_{e} \sr{E}^{(r)}_{1})} {2 c_{1} \mu (e^{\theta_{1} \ell_{1}} - 1)} + o(r).
\end{align}
If $b_{1}=0$, then $\beta_{1} = b_{1} = 0$, and therefore \eq{2LQ-L0-2} yields
\begin{align*}
  \dd{P}[L^{(r)} = 0] & =  r \lim_{\theta_{1} \to 0} \frac {\theta_{1} (c_{1} \mu \sigma_{1}^{2} \psi^{(r)}_{1}(\theta_{1}) + 2 e^{\theta_{1} \ell_{1}}\alpha^{(r)}_{e} \sr{E}^{(r)}_{1})} {2 c_{1} \mu (e^{\theta_{1} \ell_{1}} - 1)} + o(r)  \nonumber\\
  & = \frac {r} {2 c_{1} \mu \ell_{1}} ( c_{1} \mu \sigma_{1}^{2} \varphi^{(r)}_{1}(0) + 2 \alpha^{(r)}_{e} \sr{E}^{(r)}_{1}) + o(r),
\end{align*}
which proves \eq{2LQ-L0-1} for $b_{1} = 0$. If $b_{1} \not= 0$, then substituting $\theta_{1} = \beta_{1}$ into \eq{2LQ-L0-2} yields
\begin{align*}
  \dd{P}[L^{(r)} = 0] = \frac {r e^{\beta_{1} \ell_{1}}}{c_{1} \mu (e^{\beta_{1} \ell_{1}} - 1)} \big(b_{1} \mu \varphi^{(r)}_{1}(0) + \beta_{1} \alpha^{(r)}_{e} \sr{E}^{(r)}_{1} \big) + o(r),
\end{align*}
which is \eq{2LQ-L0-1} for $b_{1} \not= 0$.
\end{proof}

We also compute $\alpha^{(r)}_{e} \Delta^{(r)}_{1}$ to be used later.

\begin{lemma}
\label{mylem:2LQ-D1-2}
\begin{align}
\label{eq:2LQ-D1-2}
 & \alpha^{(r)}_{e} \Delta^{(r)}_{1} = \begin{cases}
  \frac {r} {2  \ell_{1}} ( c_{1} \mu \sigma_{1}^{2} \varphi^{(r)}_{1}(0) + 2 \alpha^{(r)}_{e} \sr{E}^{(r)}_{1}) + o(r),  & b_{1} = 0, \\
  \frac {r } {e^{\beta_{1} \ell_{1}} - 1} \Big(b_{1} \mu \varphi^{(r)}_{1}(0) + \frac 1{c_{1} \mu} \beta_{1} e^{\beta_{1} \ell_{1}} \alpha^{(r)}_{e} \sr{E}^{(r)}_{1} \Big) + o(r), & b \not= 0.
\end{cases}
\end{align}
\end{lemma}
This lemma is immediate from substituting $\dd{P}^{(r)}_{e}[L^{(r)} = 0]$ of \mylem{2LQ-L0-1} into \eq{2LQ-R1-2}. We are now ready to compute $\psi^{(r)}_{i}(\theta_{i})$ for $i = 1,2$, which will be a key to prove \mythr{2LQ-main}.

\begin{lemma}
\label{mylem:2LQ-psi12}
\begin{align}
\label{eq:2LQ-psi1}
 \psi^{(r)}_{1}(\theta_{1}) & = \begin{cases}
 \frac {e^{\theta_{1} \ell_{1}} - 1} {\ell_{1} \theta_{1}} \Big(\varphi^{(r)}_{1}(0) + \frac {2 \alpha^{(r)}_{e} \sr{E}^{(r)}_{1}}{c_{1} \mu \sigma_{1}^{2}} \Big) - \frac {2 \alpha^{(r)}_{e} \sr{E}^{(r)}_{1}}{c_{1} \mu \sigma_{1}^{2}} e^{\theta_{1} \ell_{1}} + o(1),  & b_{1} = 0,  \\
\frac {\beta_{1} }{ e^{\beta_{1} \ell_{1}} - 1} \frac {e^{\beta_{1} \ell_{1}} - e^{\theta_{1} \ell_{1}}} {\beta_{1} - \theta_{1}} \Big( \varphi^{(r)}_{1}(0) + \frac {2 \alpha^{(r)}_{e} \sr{E}^{(r)}_{1}} {c_{1} \mu \sigma_{1}^{2}} \Big) - \frac {2\alpha^{(r)}_{e} \sr{E}^{(r)}_{1} } { c_{1} \mu \sigma_{1}^{2}} e^{\theta_{1} \ell_{1}} + o(1), & b_{1} \not= 0,
\end{cases}\\
\label{eq:2LQ-psi2}
    \psi^{(r)}_{2}(\theta_{2}) & = \frac {\beta_{2} e^{\theta_{2} \ell_{1}}} {\beta_{2} - \theta_{2}} \Big( \varphi^{(r)}_{2}(0) + \frac {2 \alpha^{(r)}_{e} \sr{E}^{(r)}_{2}} {c_{2} \mu \sigma_{2}^{2} } \Big) - \frac {2 \alpha^{(r)}_{e} \sr{E}^{(r)}_{2}} {c_{2} \mu \sigma_{2}^{2} } e^{\theta_{2} \ell_{1}} + o(1).
\end{align}
\end{lemma}

\begin{proof}
We first compute $\psi^{(r)}_{1}(\theta_{1})$. Assume that $b_{1} = 0$. Substituting $\dd{P}^{(r)}_{e}[L^{(r)} = 0]$ of \mylem{2LQ-L0-1} into \eq{2LQ-BAR 4a} for $\beta_{1} = b_{1} = 0$, we have
\begin{align*}
 & - r c_{1} \mu \sigma_{1}^{2} \theta_{1} \psi^{(r)}(\theta_{1}) \nonumber\\
 & \quad =  (1 - e^{\theta_{1} \ell_{1}}) \frac {r} {\ell_{1}} ( c_{1} \mu \sigma_{1}^{2} \varphi^{(r)}_{1}(0) + 2 \alpha^{(r)}_{e} \sr{E}^{(r)}_{1}) + r 2 e^{\theta_{1} \ell_{1}} \theta_{1} \alpha^{(r)}_{e} \sr{E}^{(r)}_{1} + o(r),
\end{align*}
which proves \eq{2LQ-psi1} for $b_{1} = 0$. Next assume that $b_{1} \not= 0$. Similarly to the case of $b_{1} = 0$, it follows from \mylem{2LQ-L0-1} and \eq{2LQ-BAR 4a} for $b_{1} \not= 0$ that
\begin{align*}
 & \frac {r}{2} c_{1} \mu \sigma_{1}^{2} (\beta_{1} - \theta_{1}) \psi^{(r)}(\theta_{1}) \nonumber\\
 & \quad = \frac {r e^{\beta_{1} \ell_{1}}(1 - e^{\theta_{1} \ell_{1}}) }{ (e^{\beta_{1} \ell_{1}} - 1)} \big(b_{1} \mu \varphi^{(r)}_{1}(0) + \beta_{1} \alpha^{(r)}_{e} \sr{E}^{(r)}_{1} \big) + r e^{\theta_{1} \ell_{1}} \big( b_{1} \mu \varphi^{(r)}(0) + \theta_{1} \alpha^{(r)}_{e} \sr{E}^{(r)}_{1} \big) + o(r) \nonumber\\
 & \quad = \frac {r b_{1} \mu \varphi^{(r)}(0) }{ e^{\beta_{1} \ell_{1}} - 1} \big(e^{\beta_{1} \ell_{1}} - e^{\theta_{1} \ell_{1}}) + r \frac {\beta_{1} e^{\beta_{1} \ell_{1}}(1 - e^{\theta_{1} \ell_{1}}) + \theta_{1} e^{\theta_{1} \ell_{1}} (e^{\beta_{1} \ell_{1}} - 1)} { (e^{\beta_{1} \ell_{1}} - 1)} \alpha^{(r)}_{e} \sr{E}^{(r)}_{1} + o(r).
\end{align*}
Hence, we have
\begin{align*}
  \psi^{(r)}_{1}(\theta_{1}) & = \frac {\beta_{1} \varphi^{(r)}(0) }{ e^{\beta_{1} \ell_{1}} - 1} \frac {e^{\beta_{1} \ell_{1}} - e^{\theta_{1} \ell_{1}}} {\beta_{1} - \theta_{1}} \nonumber\\
  & \quad + \frac { \beta_{1} e^{\beta_{1} \ell_{1}} - \theta_{1} e^{\theta_{1} \ell_{1}} + (\theta_{1} - \beta_{1}) e^{\theta_{1} \ell_{1}} e^{\beta_{1} \ell_{1}} } { c_{1} \mu \sigma_{1}^{2} (e^{\beta_{1} \ell_{1}} - 1) (\beta_{1} - \theta_{1}) } 2 \alpha^{(r)}_{e} \sr{E}^{(r)}_{1} + o(r) \nonumber\\
  & = \frac {\beta_{1} \varphi^{(r)}(0) }{ e^{\beta_{1} \ell_{1}} - 1} \frac {e^{\beta_{1} \ell_{1}} - e^{\theta_{1} \ell_{1}}} {\beta_{1} - \theta_{1}} \nonumber\\
  & \quad + \frac { \beta_{1} (e^{\beta_{1} \ell_{1}} - e^{\theta_{1} \ell_{1}} ) + (\beta_{1} - \theta_{1}) e^{\theta_{1} \ell_{1}} - (\beta_{1} -\theta_{1}) e^{\theta_{1} \ell_{1}} e^{\beta_{1} \ell_{1}} } { c_{1} \mu \sigma_{1}^{2} (e^{\beta_{1} \ell_{1}} - 1) (\beta_{1} - \theta_{1}) } 2 \alpha^{(r)}_{e} \sr{E}^{(r)}_{1} + o(r) \nonumber\\
  & = \frac {\beta_{1} }{ e^{\beta_{1} \ell_{1}} - 1} \frac {e^{\beta_{1} \ell_{1}} - e^{\theta_{1} \ell_{1}}} {\beta_{1} - \theta_{1}} \Big( \varphi^{(r)}(0) + \frac {2 \alpha^{(r)}_{e} \sr{E}^{(r)}_{1}} {c_{1} \mu \sigma_{1}^{2}} \Big) - \frac {2 e^{\theta_{1} \ell_{1}} } { c_{1} \mu \sigma_{1}^{2}} \alpha^{(r)}_{e} \sr{E}^{(r)}_{1} + o(r)
\end{align*}
Thus, we have \eq{2LQ-psi1} for $b_{1} \not= 0$. We next compute $\psi^{(r)}_{2}(\theta_{2})$. From \eq{2LQ-BAR 4b}, we have
\begin{align*}
   & \psi^{(r)}_{2}(\theta_{2}) = \frac {e^{\theta_{2}\ell_{1}} }{\beta_{2} - \theta_{2}}\Big(\beta_{2} \varphi^{(r)}_{2}(0) + (\beta_{2} - (\beta_{2} - \theta_{2} )) \frac {2 \alpha^{(r)}_{e} \sr{E}^{(r)}_{2}} {c_{2} \mu \sigma_{2}^{2}} \Big) + \theta_{2} o(r),
\end{align*}
which proves \eq{2LQ-psi2}.
\end{proof}

\section{Proof of \mythr{2LQ-main}}
\label{mysec:proof-main}
\setnewcounter

From \mylem{2LQ-psi12}, we can see that, if
\begin{align}
\label{eq:E-vanish}
  \lim_{r \downarrow 0} \sr{E}^{(r)}_{1} = \lim_{r \downarrow 0} \sr{E}^{(r)}_{2} = 0,
\end{align}
then the limits of $\psi^{(r)}_{1}(\theta_{1})$ and $\psi^{(r)}_{1}(\theta_{2})$ can be obtained as long as the limits of $\varphi^{(r)}_{1}(0)$ and $\varphi^{(r)}_{1}(0)$ exist. This suggests that \eq{E-vanish} must be a key condition for \mythr{2LQ-main}. Taking this into account, we separate the proof of \mythr{2LQ-main} in two steps.

\begin{proposition}
\label{mypro:2LQ-main-2}
If \eq{E-vanish} holds, then \mythr{2LQ-main} holds without the assumption (\sect{sequence}.f).
\end{proposition}

\begin{proposition}
\label{mypro:2LQ-3f}
Under the assumptions (\sect{sequence}.a)--(\sect{sequence}.e), if
\begin{align}
\label{eq:2LQ-E12}
  \lim_{r \downarrow 0} (\sr{E}^{(r)}_{1} + \sr{E}^{(r)}_{2}) = 0,
\end{align}
then \eq{E-vanish} holds. In particular, (\sect{sequence}.f) implies \eq{2LQ-E12} and therefore \eq{E-vanish}.
\end{proposition}

These propositions are proved in the next two subsections. Obviously, they prove \mythr{2LQ-main}. To prove \mypro{2LQ-3f}, we will use the following lemma, which is proved in \myapp{2LQ-lem51}. 

\begin{lemma}
\label{mylem:2LQ-Pell-lim-1}
Under the assumptions (\sect{sequence}.a)--(\sect{sequence}.e), we have
\begin{align}
\label{eq:2LQ-Re-Es-lim-1}
 & \lim_{r \downarrow 0} \dd{E}^{(r)}_{s}[(\widehat{R}^{(r)}_{e})^{i} 1(L^{(r)}(0) = \ell^{(r)}_{1})] = 0, \qquad i=1,2,
\end{align}
if either $\lambda_{2} \not= \lambda_{2}$ or
\begin{align}
\label{eq:2LQ-Pell-lim-1}
  & \lim_{r \downarrow 0} \dd{P}^{(r)}[L^{(r)} = \ell^{(r)}_{1}] = 0.
\end{align}
\end{lemma}

\begin{lemma}
\label{mylem:2LQ-Pell-lim-2}
Under the assumptions (\sect{sequence}.a)--(\sect{sequence}.e), \eq{2LQ-Re-Es-lim-1} implies \eq{2LQ-E12}.
\end{lemma}

\begin{remark}
\label{myrem:2LQ-3f}
By Lemmas \mylemt{2LQ-Pell-lim-1} and \mylemt{2LQ-Pell-lim-2} and \mypro{2LQ-3f}, \eq{2LQ-Pell-lim-1} is sufficient for \eq{E-vanish}. We conjecture that \eq{2LQ-Pell-lim-1} is true because $L^{(r)}(\cdot)$ would weakly converge to a reflected diffusion whose stationary distribution has a density. However, the verification of \eq{2LQ-Pell-lim-1} would need a process limit for $L^{(r)}(\cdot)$, so it is beyond the present framework.
\pend
\end{remark}

\subsection{Proof of \mypro{2LQ-main-2}}
\label{mysec:proof-2LQ-main-2}

Throughout this proof, the parts (i), (ii) and (iii) of \mythr{2LQ-main} under the condition \eq{E-vanish} instead of (\sect{sequence}.f) are denoted by the part (i)', (ii)' and (iii)', respectively. 

We first prove the part (i)'. Since \eq{2LQ-phi-bound 1} holds, \mycor{2LQ-psi-phi-diff} implies that
\begin{align}
\label{eq:2LQ-psi-phi-diff 1}
  \lim_{r \downarrow 0} |\psi^{(r)}_{i}(\theta_{i})/\varphi^{(r)}_{i}(0) - \varphi^{(r)}_{i|cd}(\theta_{i})| = 0.
\end{align}

By this formula, we can immediately see that \mylem{2LQ-psi12} and \eq{E-vanish} yield
\begin{align}
\label{eq:2LQ-phi-limit 1}
 & \lim_{r \downarrow 0} \varphi^{(r)}_{1|cd}(\theta_{1}) = \widetilde{\varphi}_{1|cd}(\theta_{1}) \equiv 
\begin{cases}
 \ds \frac {e^{\theta_{1} \ell_{1}} - 1}{\ell_{1} \theta_{1}}, &  b_{1}=0, \\
 \ds \frac {\beta_{1}} {e^{\beta_{1} \ell_{1}} - 1} \frac {e^{\beta_{1} \ell_{1}}  - e^{\theta_{1} \ell_{1}}} {\beta_{1} - \theta_{1}}, \quad & b_{1} \not= 0,
\end{cases}\\
\label{eq:2LQ-phi-limit 2}
 & \lim_{r \downarrow 0} \varphi^{(r)}_{2|cd}(\theta_{2}) = \widetilde{\varphi}_{2|cd}(\theta_{2}) \equiv e^{\theta_{2} \ell_{1}} \frac {\beta_{2}}{\beta_{2} - \theta_{2}}.
\end{align}
Since $\widetilde{\varphi}_{i|cd}$ is the moment generating function of $\widetilde{\nu}_{i|cd}$, \eq{2LQ-phi-limit 1} and \eq{2LQ-phi-limit 2} are equivalent to that $\nu^{(r)}_{i|cd}$ weakly converges to $\widetilde{\nu}_{i|cd}$ for $i=1,2$, respectively. Thus, the part (i)' is proved.

We next prove the part (ii)'. For this, we compute the limits of $\varphi^{(r)}_{i}(0)$ for $i=1,2$ under the assumptions in the part (ii)'. First consider the case that $T^{(r)}_{s}$ is exponentially distributed for all $r \in (0,t]$. In this case, $R^{(r)}_{s}(0-)$ has the same distribution as $T^{(r)}_{s}$ under $\dd{P}^{(r)}_{e}$ by Lemma 2.1 and Theorem 3.1 of \cite{Miya2024}, so, by \mylem{2LQ-basic-2}  for $k=1$,
\begin{align*}
  \mu^{(r)} \dd{E}^{(r)}_{e}[\widehat{R}^{(r)}_{s-} 1(L^{(r)}(0-) \; = \ell^{(r)}_{1})] & \; = \dd{P}^{(r)}_{e}[L^{(r)}(0-) = \ell^{(r)}_{1}] + o(r).
\end{align*}
Applying this fact to the definitions of $\Delta^{(r)}_{i}$ yields
\begin{align}
\label{eq:2LQ-D-diff-1}
 & \lambda^{(r)}_{2} \Delta^{(r)}_{1} - \lambda^{(r)}_{1} \Delta^{(r)}_{2}  \nonumber\\
 & \quad = (\lambda^{(r)}_{2} - \lambda^{(r)}_{1}) (\mu^{(r)} \dd{E}^{(r)}_{e}[\widehat{R}^{(r)}_{s-} 1(L^{(r)}(0-) = \ell^{(r)}_{1})] - \dd{P}^{(r)}_{e}[L^{(r)}(0-) = \ell^{(r)}_{1}]) = o(r).
\end{align}

Compute $\alpha^{(r)}_{e} \Delta^{(r)}_{1}$ by \mylem{2LQ-D1-2} and \eq{E-vanish}, and recall \eq{2LQ-R2-2}, then
\begin{align}
\label{eq:2LQ-D1-3}
 & \alpha^{(r)}_{e} \Delta^{(r)}_{1} = \begin{cases}
  \frac {r} {2  \ell_{1}} c_{1} \mu \sigma_{1}^{2} \varphi^{(r)}_{1}(0) + o(r),  & b_{1} = 0, \\
  \frac {r } {e^{\beta_{1} \ell_{1}} - 1} b_{1} \mu \varphi^{(r)}_{1}(0) + o(r), & b \not= 0,\\
\end{cases}\\
\label{eq:2LQ-D2-1}
   & \alpha^{(r)}_{e} \Delta^{(r)}_{2} = r\mu b_{2} \varphi^{(r)}_{2}(0) + o(r).
\end{align}

Then, for $b_{1}=0$, \eq{2LQ-D-diff-1} implies that
\begin{align*}
  \lambda_{2} \frac {c_{1} \mu  \sigma_{1}^{2}} {2  \ell_{1}} \varphi^{(r)}_{1}(0) - \lambda_{1} \mu b_{2} \varphi^{(r)}_{2}(0) = o(1).
\end{align*}
Since $\varphi^{(r)}_{1}(0) + \varphi^{(r)}_{2}(0) = 1$ and $\mu c_{i} = \lambda_{i}$ for $i=1,2$, this can be written as
\begin{align}
\label{eq:2LQ-phi0-1}
  \frac {\sigma_{1}^{2}} {2  \ell_{1}} \varphi^{(r)}_{1}(0) = \frac {b_{2}}{c_{2}} (1-\varphi^{(r)}_{1}(0)) + o(1).
\end{align}
Hence, 
\begin{align}
\label{eq:2LQ-varphi-i0-1}
  \lim_{r \downarrow 0} \varphi^{(r)}_{i}(0) = \widetilde{\varphi}_{i}(0) \equiv \begin{cases}
\ds \frac {2b_{2} \ell_{1}}{c_{2} \sigma_{1}^{2} + 2 b_{2} \ell_{1}} = \widetilde{A}_{1} > 0, & i=1,\\
\ds \frac {c_{2} \sigma_{1}^{2}}{c_{2} \sigma_{1}^{2} + 2 b_{2} \ell_{1}} = \widetilde{A}_{2} > 0,  & i=2.
\end{cases}
\end{align}

Similarly, for $b_{1} \not= 0$, from \eq{2LQ-D1-3} and \eq{2LQ-D2-1}, we have
\begin{align}
\label{eq:2LQ-phi0-2}
  \frac {\lambda_{2} b_{1} \mu } {e^{\beta_{1} \ell_{1}} - 1} \varphi^{(r)}_{1}(0) - \lambda_{1} \mu b_{2} \varphi^{(r)}_{2}(0) = o(1).
\end{align}
Hence, $\mu c_{i} = \lambda_{i}$ for $i=1,2$ yields
\begin{align}
\label{eq:2LQ-varphi-i1-1}
  \lim_{r \downarrow 0} \varphi^{(r)}_{i}(0) = \widetilde{\varphi}_{i}(0) \equiv \begin{cases}
\ds \frac {c_{1}b_{2} (e^{\beta_{1} \ell_{1}} - 1)} {c_{2} b_{1} + c_{1} b_{2} (e^{\beta_{1} \ell_{1}} - 1)} = \widetilde{A}_{1} > 0, & i=1,\\
\ds \frac {c_{2}b_{1}} {c_{2}b_{1} + c_{1}b_{2} (e^{\beta_{1} \ell_{1}} - 1)} = \widetilde{A}_{2} > 0, & i=2.
\end{cases}
\end{align}

We next consider the case that $\lambda^{(r)}_{1} - \lambda^{(r)}_{2} = O(r)$. Define $\ol{\varphi}_{1}(0)$ and $\ul{\varphi}_{1}(0)$ as
\begin{align*}
  \ol{\varphi}_{1}(0) = \limsup_{r \downarrow 0} \varphi^{(r)}_{1}(0), \qquad \ul{\varphi}_{1}(0) = \liminf_{r \downarrow 0} \varphi^{(r)}_{1}(0),
\end{align*}
and let $\{\ol{r}_{n} \in (0,1]; n \ge 1\}$ and $\{\ul{r}_{n} \in (0,1]; n \ge 1\}$ be the sequences such that $\varphi^{(\ol{r}_{n})}_{1}(0)$ and $\varphi^{(\ul{r}_{n})}_{1}(0)$ attain $\ol{\varphi}_{1}(0)$ and $\ul{\varphi}_{1}(0)$ as $n \to \infty$, respectively. We aim to show $\ol{\varphi}_{1}(0) = \ul{\varphi}_{1}(0)$ using $\lambda^{(r)}_{1} - \lambda^{(r)}_{2} = O(r)$.

To see this, we first consider $\ol{\varphi}_{1}(0)$. Since $\varphi^{(\ol{r}_{n})}_{1}(0)$ converges to $\ol{\varphi}_{1}(0)$, applying \eq{E-vanish} and \eq{2LQ-psi-phi-diff 2} to \mylem{2LQ-psi12}, we have
\begin{align}
\label{eq:2LQ-phi1n-lim}
  \lim_{n \to \infty} \varphi^{(\ol{r}_{n})}_{1}(\theta_{1}) = \lim_{n \to \infty} \psi^{(\ol{r}_{n})}_{1}(\theta_{1}) = \widetilde{\varphi}_{1|cd}(\theta_{1}) \lim_{n \to \infty} \varphi^{(\ol{r}_{n})}_{1}(0) = \widetilde{\varphi}_{1|cd}(\theta_{1}) \ol{\varphi}_{1}(0).
\end{align}
Since the distribution $\widetilde{\nu}_{1}$ has the bounded density $\overline{\varphi}_{1}(0) h_{1|cd}$ on $[0,\ell_{1}]$, this implies that
\begin{align}
\label{eq:2LQ-Pelln-lim}
 & \lim_{n \to \infty} \dd{P}[L^{(\ol{r}_{n})} = \ell^{(\ol{r}_{n})}_{0}] \le \lim_{n \to \infty} \dd{P}\left[\ol{r}_{n} L^{(\ol{r}_{n})} \in [\ell_{1} - \varepsilon, \ol{r}_{n} \ell^{(\ol{r}_{n})}_{0}]\right] \nonumber\\
 & \quad \le \lim_{n \to \infty} \ol{\varphi}_{1}(0) \int_{\ell_{1}- \varepsilon}^{\ell_{1}} h_{1|cd}(x) dx \le \varepsilon \sup_{x \in [\ell_{1} - \varepsilon, \ell_{1}]} h_{1|cd}(x), \qquad \forall \varepsilon > 0,
\end{align}
because $0 \le \ol{\varphi}_{1}(0) \le 1$. Hence, $\dd{P}[L^{(\ol{r}_{n})} = \ell^{(\ol{r}_{n})}_{0}] = o(1)$ as $n \to \infty$. By \mylem{2LQ-Pell-lim-1}, this implies that
\begin{align}
\label{eq:2LQ-ReEs-rn}
  \lim_{n \to \infty} \dd{E}^{(\ol{r}_{n})}_{s}[\widehat{R}^{(\ol{r}_{n})}_{e} 1(L^{(\ol{r}_{n})}(0) = \ell^{(\ol{r}_{n})}_{0})] = 0.
\end{align}
Then, from the definitions of $\Delta^{(r)}_{i}$ and $\lambda^{(\ol{r}_{n})}_{1} - \lambda^{(\ol{r}_{n})}_{2} = O(\ol{r}_{n})$, we have
\begin{align}
\label{eq:2LQ-D-diff-2}
  \Delta^{(\ol{r}_{n})}_{1} - \Delta^{(\ol{r}_{n})}_{2} = (\lambda^{(\ol{r}_{n})}_{1} - \lambda^{(\ol{r}_{n})}_{2}) \dd{E}^{(\ol{r}_{n})}_{s}[\widehat{R}^{(\ol{r}_{n})}_{e} 1(L^{(\ol{r}_{n})}(0) = \ell^{(\ol{r}_{n})}_{0})] = o(\ol{r}_{n}),
\end{align}
If $b_{1} = 0$, then \eq{2LQ-R2-2} and \mylem{2LQ-D1-2} yield
\begin{align*}
  \alpha^{(\ol{r}_{n})}_{e} (\Delta^{(\ol{r}_{n})}_{1} - \Delta^{(\ol{r}_{n})}_{2}) = \frac {r c_{1} \mu  \sigma_{1}^{2}} {2  \ell_{1}} \varphi^{(\ol{r}_{n})}_{1}(0) - r \mu b_{2} \varphi^{(\ol{r}_{n})}_{2}(0) + o(\ol{r}_{n}),
\end{align*}
Hence, by \eq{2LQ-D-diff-2} and $\varphi^{(\ol{r}_{n})}_{1}(0) + \varphi^{(\ol{r}_{n})}_{2}(0) = 1$,
\begin{align*}
  \frac {c_{1} \mu  \sigma_{1}^{2}} {2  \ell_{1}} \varphi^{(\ol{r}_{n})}_{1}(0) = \mu b_{2} (1 - \varphi^{(\ol{r}_{n})}_{1}(0)) + o(1).
\end{align*}
This is identical with \eq{2LQ-phi0-1} for $r = \ol{r}_{n}$ because $c_{1} = c_{2}$ by $\lambda_{1} = \lambda_{2}$. Hence, we have \eq{2LQ-varphi-i0-1} for $r = \ol{r}_{n}$ and $b_{1} = 0$. Similarly, if $b_{1} \not= 0$, then, again by \eq{2LQ-R2-2}, \eq{2LQ-D2-1} and \eq{2LQ-D-diff-2} for $r = \ol{r}_{n}$, we have
\begin{align*}
  \frac {b_{1} \mu } {e^{\beta_{1} \ell_{1}} - 1} \varphi^{(\ol{r}_{n})}_{1}(0) - \mu b_{2} \varphi^{(\ol{r}_{n})}_{2}(0) = o(1).
\end{align*}
This is identical with \eq{2LQ-phi0-2} for $r = \ol{r}_{n}$ by $\lambda_{1} = \lambda_{2}$. Hence, we have \eq{2LQ-varphi-i1-1} for $r = \ol{r}_{n}$ and $b_{1} \not= 0$. Thus, we have
\begin{align*}
  \ol{\varphi}_{1}(0) = \lim_{n \to \infty} \varphi^{(\ol{r}_{n})}_{1}(0) = \widetilde{\varphi}_{1}(0) > 0.
\end{align*}

We can repeat the exactly same arguments for $\{\ul{r}_{n}; n \ge 0\}$, and therefore we also have
\begin{align*}
  \ul{\varphi}_{1}(0) = \lim_{n \to \infty} \varphi^{(\ul{r}_{n})}_{1}(0) = \widetilde{\varphi}_{1}(0) > 0.
\end{align*}
Hence, $\ol{\varphi}_{1}(0) = \ul{\varphi}_{1}(0)$ as claimed, and therefore $\lim_{r \downarrow 0} \varphi^{(r)}_{1}(0) = \widetilde{\varphi}_{1}(0) > 0$. Hence, the condition \eq{2LQ-phi-bound 1} in the part (i)' is satisfied, so we can use the results in that part. Namely, we have \eq{2LQ-phi-limit 1} and \eq{2LQ-phi-limit 2}, equivalently, \eq{2LQ-cd-limit 1}. From this together with \eq{2LQ-varphi-i0-1} and \eq{2LQ-varphi-i1-1}, we have \eq{2LQ-nu-limit 1}.

It remains to prove \eq{2LQ-alpha-limit 1} in the part (ii)'. To see this, apply the assumption (\sect{sequence}.c), \eq{2LQ-varphi-i0-1}, \eq{2LQ-varphi-i1-1} and \mylem{2LQ-L0-1} to \eq{2LQ-alpha-2}, then we have
\begin{align*}
  \lim_{r \downarrow 0} \alpha^{(r)}_{e} = \lim_{r \downarrow 0} (c^{(r)}_{1} \mu^{(r)} (\varphi^{(r)}_{1}(0) - \dd{P}[L^{(r)}=0]) + c^{(r)}_{2} \mu^{(r)} \varphi^{(r)}_{2}(0)) = \lambda_{1} \widetilde{\varphi}_{1}(0) + \lambda_{2} \widetilde{\varphi}_{2}(0),
\end{align*}
which proves \eq{2LQ-alpha-limit 1}.

Finally, we prove the part (iii)'. Assume that $\lambda_{1} > \lambda_{2}$, then $\Delta^{(r)}_{1} > \Delta^{(r)}_{2}$ by \eq{2LQ-D-diff-2}. For $b=0$, this yields
\begin{align*}
  \frac {c_{1} \mu  \sigma_{1}^{2}} {2  \ell_{1}} \varphi^{(r)}_{1}(0) > \mu c_{2} b_{2} (1 - \varphi^{(r)}_{1}(0)) + o(1),
\end{align*}
and therefore $\varphi^{(r)}_{1}(0) > \widetilde{A}_{1} > 0$. Similarly, for $b \not=0$, we also have $\varphi^{(r)}_{1}(0) > \widetilde{A}_{1} > 0$. Hence, \eq{2LQ-phi-bound 1} holds for $i=1$, so $\nu^{(r)}_{1|cd}$ weakly converges to $\widetilde{\nu}_{1|cd}$ as $r \downarrow 0$ by the part (i)'. Similarly, if $\lambda_{1} < \lambda_{2}$, then \eq{2LQ-phi-bound 1} holds for $i=2$, so $\nu^{(r)}_{2|cd}$ weakly converges to $\widetilde{\nu}_{2|cd}$. Thus, the proof of \mypro{2LQ-main-2} is completed.

\subsection{Proof of \mypro{2LQ-3f}}
\label{mysec:proof-2LQ-3f}

We prove that \eq{2LQ-E12} implies \eq{E-vanish} by reductio ad absurdum. First, assume that $\sr{E}^{(r)}_{1}$ does not vanish as $r \downarrow 0$. Then, there are a sequence $\{r_{n} \in (0,1]; n \ge 0\}$ and a constant $\delta_{1} \not= 0$ such that $r_{n} \downarrow 0$ and $\sr{E}^{(r_{n})}_{1} \to \delta_{1}$ as $n \to \infty$, because $\sr{E}^{(r)}_{1}$ is uniformly bounded for $r \in (0,1]$ by \mycor{2LQ-Re-Rs 1}.

We next choose a subsequence $\{r'_{n} \in (0,1]; n \ge 0\} \subset \{r_{n} \in (0,1]; n \ge 0\}$ such that $\alpha^{(r)}_{e}$ and $\varphi^{(r'_{n})}_{1}(0)$ converge to constants $a_{e} > 0$ and $d_{1} \in [0,1]$, respectively, as $n \to \infty$, which are possible because $\alpha^{(r)}_{e}$ is uniformly bounded away from $0$ and above by \mycor{alpha 2} and $\varphi^{(r'_{n})}_{1}(0) \in [0,1]$. Then, by \mylem{2LQ-psi12}, we have
\begin{align*}
 \lim_{n \to \infty} \varphi^{(r)}_{1}(\theta_{1}) & = \begin{cases}
 \frac {e^{\theta_{1} \ell_{1}} - 1} {\ell_{1} \theta_{1}} \Big(d_{1} + \frac {2 a_{e} \delta_{1}}{c_{1} \mu \sigma_{1}^{2}} \Big) - \frac {2 a_{e} \delta_{1}}{c_{1} \mu \sigma_{1}^{2}} e^{\theta_{1} \ell_{1}},  & b_{1} = 0,  \\
\frac {\beta_{1} }{ e^{\beta_{1} \ell_{1}} - 1} \frac {e^{\beta_{1} \ell_{1}} - e^{\theta_{1} \ell_{1}}} {\beta_{1} - \theta_{1}} \Big( d_{1} + \frac {2 a_{e} \delta_{1}}{c_{1} \mu \sigma_{1}^{2}} \Big) - \frac {2 a_{e} \delta_{1}}{c_{1} \mu \sigma_{1}^{2}} e^{\theta_{1} \ell_{1}}, & b_{1} \not= 0.
\end{cases}
\end{align*}
This shows that $\nu^{(r)}_{1}$ weakly converges to a finite measure on $[0,\ell_{1}]$, but this measure cannot have a negative mass at $\ell_{1}$, so we must have that $\delta_{1} < 0$ because $\delta_{1} \not= 0$. 

We next consider the sequence $\{\sr{E}^{(r'_{n})}_{2}; n \ge 1\}$. If this sequence does not vanish as $n \to \infty$, then there is a subsequence 
$\{r''_{n} \in (0,1]; n \ge 0\} \subset \{r'_{n} \in (0,1]; n \ge 0\}$ such that $\sr{E}^{(r''_{n})}_{2}$ converges some $\delta_{2} \not= 0$ as $n \to \infty$. Then, by the exactly same argument as used for $\{\sr{E}^{(r_{n})}_{1}; n \ge 1\}$ using \eq{2LQ-psi2} instead of \eq{2LQ-psi1}, we must have that $\delta_{2} < 0$. Hence, we have
\begin{align*}
  \lim_{r''_{n} \downarrow 0} (\sr{E}^{(r''_{n})}_{1} + \sr{E}^{(r''_{n})}_{2}) = \delta_{1} + \delta_{2} < 0.
\end{align*}
This contradicts \eq{2LQ-E12}, and therefore $\sr{E}^{(r)}_{1}$ must vanish as $r \downarrow 0$. By \eq{2LQ-E12}, this further implies that $\sr{E}^{(r)}_{2}$ vanishes as $r \downarrow 0$. Hence, we have \eq{E-vanish}.

We now prove that (\sect{sequence}.f) implies \eq{2LQ-E12}. By the definitions of $\sr{E}^{(r)}_{1}$ and $\sr{E}^{(r)}_{2}$, we have
\begin{align}
\label{eq:2LQ-E1+2}
 & \sr{E}^{(r)}_{1} + \sr{E}^{(r)}_{2} = - 2^{-1} \dd{E}^{(r)}_{s}[  \{ ((\lambda^{(r)}_{1} \sigma^{(r)}_{e,1})^{2} + 2) (\lambda^{(r)}_{1} \widehat{R}^{(r)}_{e}) - (\lambda^{(r)}_{1} \widehat{R}^{(r)}_{e})^{2} \} 1(L^{(r)}(0) = \ell^{(r)}_{1})] \nonumber\\
  & \qquad + 2^{-1} \dd{E}^{(r)}_{s}[\{ (\lambda^{(r)}_{2} (\sigma^{(r)}_{e,2})^{2} + 2) (\lambda^{(r)}_{2} \widehat{R}^{(r)}_{e}) - (\lambda^{(r)}_{2} \widehat{R}^{(r)}_{e})^{2} \} 1(L^{(r)}(0) = \ell^{(r)}_{1})].
\end{align}
We separately consider the two cases that $\lambda_{1} = \lambda_{2}$ and $\lambda_{1} \not= \lambda_{2}$. First, consider the case that $\lambda_{1} = \lambda_{2}$. In this case, $\sigma_{e,1}^{2} = \sigma_{e,2}^{2}$ by (\sect{sequence}.f). Hence, \eq{2LQ-E1+2} implies \eq{2LQ-E12}. Next consider the case that $\lambda_{1} \not= \lambda_{2}$. In this case, we have \eq{2LQ-Re-Es-lim-1} by \mylem{2LQ-Pell-lim-1}. Applying \eq{2LQ-Re-Es-lim-1} to \eq{2LQ-E1+2}, we have \eq{2LQ-E12} because $\lambda^{(r)}_{i}$ and $\sigma^{(r)}_{e,i}$ converge to constants $\lambda_{i}$ and $\sigma_{e,i}$, respectively, as $r \downarrow 0$ for $i=1,2$. This completes the proof of \mypro{2LQ-3f}.

\section{Concluding remarks}
\label{mysec:concluding}
\setnewcounter

In this section, we intuitively discuss two questions, what is a process limit for the 2-level $GI/G/1$ queue in heavy traffic and how to verify \eq{2LQ-Ai} in \myrem{2LQ-main}.

\subsection{Process limit}
\label{mysec:process-limit}

Up to now, we have considered the limit of the sequence of the scaled stationary distributions in heavy traffic. From the assumptions (\sect{sequence}.a)--(\sect{sequence}.f), one can guess this limiting distribution must be the stationary distribution of the reflected diffusion on $[0,\infty)$ whose Brownian component changes its drift and variance at the level $\ell_{1}$.

Recall that, if one of the conditions in (ii) of \mythr{2LQ-main} is satisfied, then, for $b_{1} \not= 0$, the stationary distribution $\widetilde{\nu}$ is characterized by the stationary equation, that is, BAR:
\begin{align}
\label{eq:2LQ-RBM 1}
 & \sum_{i=1}^{2} \Big(\! - b_{i} \theta_{i} + \frac 12 c_{i} \sigma_{i}^{2} \theta_{i}^{2}\Big) \widetilde{\varphi}_{i}(\theta_{i}) + \frac {\beta_{1} e^{\beta_{1} \ell_{1}}} {e^{\beta_{1} \ell_{1}} - 1} \theta_{1} \frac {c_{1} \sigma^{2}_{1}}{2} \widetilde{\varphi}_{1}(0) - \frac {\beta_{1}e^{\theta_{1} \ell_{1}}} {e^{\beta_{1} \ell_{1}} - 1}  \theta_{1} \frac {c_{1} \sigma^{2}_{1}}{2} \widetilde{\varphi}_{1}(0) \nonumber\\
 & \qquad + \beta_{2} e^{\theta_{2}\ell_{1}} \theta_{2} \frac {c_{2} \sigma^{2}_{2}}{2} \widetilde{\varphi}_{2}(0) = 0, \qquad \theta \equiv (\theta_{1}, \theta_{2}) \in \dd{R} \times \dd{R}_{-},
\end{align}
which can be obtained from \eq{2LQ-BAR1} and \eq{2LQ-D 1} similarly to $D^{(r)}(\theta,\theta)$ from \eq{D11-2} and \eq{2LQ-BAR2}, recall that $\beta_{i} = 2 b_{i}/(c_{i} \sigma_{i}^{2})$ for $i=1,2$. The BAR for $b_{1} = 0$ is obtained from \eq{2LQ-RBM 1} by letting $b_{1} \downarrow 0$. So, we only consider the case that $b_{1} \not= 0$.

We like to identify this stationary distribution $\widetilde{\nu}$ by that of a reflected diffusion process. Assume that the diffusion scaled process $L^{(r)}_{\rm dif}(\cdot) \equiv \{r L^{(r)}(r^{-2} t); t \ge 0\}$ weakly converges to a nonnegative valued stochastic process $Z(\cdot) \equiv \{Z(t); t \ge 0\}$ as $r \downarrow 0$. We aim to characterize this $Z(\cdot)$ by a stochastic integral equation. To this end, define real-valued functions $\sigma(x)$ and $b(x)$ for $x \in \dd{R}_{+}$ as
\begin{align*}
 & b(x) = b_{1} \mu 1(x \le \ell_{1}) + b_{2} \mu 1(x > \ell_{1}), \\
 & \sigma(x) = \sqrt{c_{1}} \mu \sigma_{1} 1(x \le \ell_{1}) + \sqrt{c_{2}} \mu \sigma_{2} 1(x > \ell_{1}), 
\end{align*}
then one may expect that $Z(\cdot)$ satisfies the following stochastic integral equation.
\begin{align}
\label{eq:2LQ-BM-Z 1}
  Z(t) = Z(0) & + \int_{0}^{t} \sigma(Z(u)) dW(u) - \int_{0}^{t} b(Z(u)) du + Y(t), \qquad t \ge 0,
\end{align}
where $W(\cdot)$ is the standard Brownian motion, $Y(\cdot) \equiv \{Y(t); t \ge 0\}$ is a non-deceasing process satisfying that $\int_{0}^{t} 1(Z(u) > 0) Y(du) = 0$ for $t \ge 0$. Recently, Miyazawa \cite{Miya2024b} (see also \cite{AtarCastReim2024a,AtarWola2024,Miya2024d}) shows that \eq{2LQ-BM-Z 1} has a unique weak solution, and $Z(t)$ has a stationary distribution if only if $b_{2} > 0$, which corresponds to $\rho^{(r)}_{2} < 1$. 

Assume that $Z(\cdot)$ is stationary, then, applying the Ito formula to \eq{2LQ-BM-Z 1} for test function $f(x) = e^{\theta x}$, we have
\begin{align}
\label{eq:2LQ-RBM 2}
 & \sum_{i=1}^{2} \Big(\! - b_{i} \theta + \frac 12 c_{i} \sigma_{i}^{2} \theta^{2}\Big) \widehat{\varphi}_{i}(\theta) + \frac {\beta_{1} e^{\beta_{1} \ell_{1}}} {e^{\beta_{1} \ell_{1}} - 1} \theta \frac {c_{1} \sigma^{2}_{1}}{2} \widehat{\varphi}_{1}(0) = 0, \qquad \theta \le 0.
\end{align}
where $\widehat{\varphi}_{1}(\theta) = \dd{E}[e^{\theta Z} 1(Z \le \ell_{1})]$ and $\widehat{\varphi}_{2}(\theta) = \dd{E}[e^{\theta Z} 1(Z > \ell_{1})]$ for random variable $Z$ subject to the stationary distribution of the $Z(\cdot)$. \eq{2LQ-RBM 2} is the stationary equation for $Z(\cdot)$. Then, we can see that \eq{2LQ-RBM 1} for $\theta_{1} = \theta_{2} = \theta$ agrees with \eq{2LQ-RBM 2} if and only if $c_{1} = c_{2}$, equivalently, $\lambda_{1} = \lambda_{2}$. Hence, if $L^{(r)}_{\rm dif}(\cdot)$ weakly converges to $Z(\cdot)$ as $r \downarrow 0$, then we must have $\lambda_{1} = \lambda_{2}$. Of course, this does not exclude the possibility that $L^{(r)}_{\rm dif}(\cdot)$ weakly converges to some process other than $Z(\cdot)$. We conjecture that this is indeed possible because the stationary distribution of $L^{(r)}_{\rm dif}(\cdot)$ weakly converges to the $\widetilde{\nu}$ of \eq{2LQ-alpha-t-1} for $\lambda_{1} \not= \lambda_{2}$ at least when $T_{s}$ is exponentially distribution by (ii) of \mythr{2LQ-main}. Thus, we have two challenging problems.

\begin{itemize}
\item [(a)] Prove that $L^{(r)}_{\rm dif}(\cdot)$ weakly converges to $Z(\cdot)$ when $\lambda_{1} = \lambda_{2}$.
\item [(b)] Find a process limit of $L^{(r)}_{\rm dif}(\cdot)$ when $\lambda_{1} \not= \lambda_{2}$.
\end{itemize}

Both problems require theoretical investigations based on process limits, which is beyond the present framework. So, we leave them for future study.

\subsection{Intuitive verification of \eq{2LQ-Ai}}
\label{mysec:intuitive}

We intuitively show \eq{2LQ-Ai}. First assume that $b_{1} \not= 0$. Under the stationary setting, let $\tau^{(r)}_{1}$ be the first time for $L^{(r)}(t)$ to get into $\ell^{(r)}_{1}$ from below or above. Suppose that $L^{(r)}(t)$ first moves to $\ell^{(r)}_{1} + 1$ after time $\tau^{(r)}_{1}$, and returns to $\ell^{(r)}_{1}$ at time $\tau^{(r)}_{2}$ after time $B^{(r)}_{2}$. Since the relative frequency of moving up at level $\ell^{(r)}_{1}$ is guessed to be $f^{(r)}_{\rm up} \equiv \lambda^{(r)}_{2}/(\lambda^{(r)}_{2} + c^{(r)}_{1} \mu^{(r)})$ and $B^{(r)}_{2}$ is stochastically almost identical with the busy period of the $GI/G/1$ queue with inter-arrival time $T^{(r)}_{e,2}$ and service time $T^{(r)}_{s}/c^{(r)}_{2}$, the mean sojourn time for $L^{(r)}(t)$ to be above $\ell^{(r)}_{1}$ in the cycle $(\tau^{(r)}_{1}, \tau^{(r)}_{2}]$ is
\begin{align}
\label{eq:2LQ-RBM B2}
  f^{(r)}_{\rm up} \dd{E}[B^{(r)}_{2}] \sim \frac {\lambda^{(r)}_{2}} {\lambda^{(r)}_{2} + c^{(r)}_{1} \mu^{(r)}} \times \frac 1{c^{(r)}_{2} \mu^{(r)} (1 - \rho^{(r)}_{2})} = \frac {c_{2}}{c_{1} + c_{2}} \times \frac 1{r b_{2} \mu} + o(1),
\end{align}
where ``$\sim$'' stands for the same order as $r \downarrow 0$, that is, for non-zero valued functions $f_{1}$ and $f_{2}$, $f_{1}(r) \sim f_{2}(r)$ means that $\lim_{r \downarrow 0} f_{1}(r)/f_{2}(r) = 1$.

On the other hand, when $L^{(r)}(t)$ first moves to $\ell^{(r)}_{1} - 1$ after time $\tau^{(r)}_{1}$, let $B^{(r)}_{1}$ be the time to returns to $\ell^{(r)}_{1}$ from below.  Let $f^{(r)}_{\rm down} = 1 - f^{(r)}_{\rm up}$, then, similarly to $B^{(r)}_{2}$ but, taking $b_{1} \not = 0$ and the reflecting boundary at level $0$ into account, it follows from (4.25) of \cite{Miya2024} that the mean sojourn time for $L^{(r)}(t)$ to be below $\ell^{(r)}_{1}$ in the cycle $(\tau^{(r)}_{1}, \tau^{(r)}_{2}]$ is
\begin{align}
\label{eq:2LQ-RBM B1}
  f^{(r)}_{\rm down} \dd{E}[B^{(r)}_{1}] & \sim \frac {c^{(r)}_{1} \mu^{(r)}} {\lambda^{(r)}_{2} + c^{(r)}_{1} \mu^{(r)}} \times \frac 1{\lambda^{(r)}_{1} \dd{P}^{(r)}_{s}[L^{(r)}(0)=\ell^{(r)}_{1}]}  \nonumber\\
  & = \frac {c_{1}}{c_{1} + c_{2}} \times \frac {e^{\beta_{1} \ell_{1}} - 1} {r b_{1} \mu} + o(1).
\end{align}
Since $f^{(r)}_{\rm up} B^{(r)}_{2} + f^{(r)}_{\rm down} B^{(r)}_{1}$ is the mean cycle time to return to $\ell^{(r)}_{1}$, it follows from the cycle formula of a regenerative process that
\begin{align*}
  \lim_{r \downarrow 0} \nu^{(r)}_{1}(\dd{R}_{+}) = \lim_{r \downarrow 0} \frac {f^{(r)}_{\rm down} \dd{E}[B^{(r)}_{1}]} {f^{(r)}_{\rm down} \dd{E}[B^{(r)}_{1}] + f^{(r)}_{\rm up} \dd{E}[B^{(r)}_{2}]} = \frac {c_{1} b_{2} (e^{\beta_{1} \ell_{1}} - 1)}{c_{2} b_{1} + c_{1} b_{2} (e^{\beta_{1} \ell_{1}} - 1)} = \widetilde{A}_{1}.
\end{align*}
Thus, for $b_{1} \not= 0$, \eq{2LQ-Ai} is intuitively obtained for $i=1$, and it is similarly obtained for $i=2$. The case for $b_{1} = 0$ is similarly derived. Thus, we have support of \eq{2LQ-Ai}. \vspace{-1.5ex}

\appendix

\section*{Appendix}
\setnewcounter
\setcounter{section}{1}

\subsection{Proof of \mylem{alpha 1}}
\label{myapp:alpha 1}

Let $N^{(r)}_{e,\wedge}(\cdot)$ and $N^{(r)}_{e,\vee}(\cdot)$ be the renewal processes whose inter-counting times are $T_{e,1}(n) \wedge T_{e,2}(n)$ and $T_{e,1}(n) \vee T_{e,2}(n)$, respectively, for $n \ge 1$. Obviously, $0 < 1/\dd{E}[T_{e,1} \vee T_{e,2}] = \dd{E}[N^{(r)}_{e,\vee}(t)] \le \dd{E}[N^{(r)}_{e}(t)] \le \dd{E}[N^{(r)}_{e,\wedge}(t)] = 1/\dd{E}[T_{e,1} \wedge T_{e,2}] < \infty$ under the stationary framework. Hence, $\alpha^{(r)}_{e} = \dd{E}[N^{(r)}_{e}(1)]$ is finite and positive. To prove $\alpha^{(r)}_{s} = \alpha^{(r)}_{e}$, consider the stationary process $X^{(r)}_{a}(t) \equiv L^{(r)}(t) \wedge a$ for each constant $a > 0$. Then, $X^{(r)}_{a}(t)$ is piecewise constant, has the increment $X^{(r)}_{a}(u) - X^{(r)}_{a}(u-) = 1(L^{(r)}(u-) \le a-1)$ at the counting instant $u$ of $N^{(r)}_{e}$, and has the increment $X^{(r)}_{a}(u) - X^{(r)}_{a}(u-) = - 1(L^{(r)}(u) \le a)$ at the counting instant $u$ of $N^{(r)}_{s}$. Hence, from an elementary calculus similar to \eq{time-evolution-1}, we have
\begin{align*}
  X^{(r)}_{a}(1) - X^{(r)}_{a}(0) & = \int_{0}^{1} 1(L^{(r)}(u-) \le a-1) N^{(r)}_{e}(du) - \int_{0}^{1} 1(L^{(r)}(u) \le a) N^{(r)}_{s}(du).
\end{align*}
Since $X^{(r)}_{a}(t)$ is stationary by (\sect{sequence}.e) and bounded by $a$, this equation yields
\begin{align*}
  \dd{E}\Big[\int_{0}^{1} 1(L^{(r)}(u-) \le a-1) N^{(r)}_{e}(du)\Big] = \dd{E}\Big[\int_{0}^{1} 1(L^{(r)}(u) \le a) N^{(r)}_{s}(du)\Big].
\end{align*}
Letting $a \to \infty$, the left-hand side of this equation converges to $\dd{E}[N^{(r)}_{e}(1)] = \alpha^{(r)}_{e}$, while it right-hand side converges to $\dd{E}[N^{(r)}_{s}(1)] = \alpha^{(r)}_{s}$. Hence, we  have $\alpha^{(r)}_{s} = \alpha^{(r)}_{e}$.

\subsection{Proofs of \mycor{2LQ-mean-Q}}
\label{myapp:2LQ-mean-Q}

We only verify \eq{2LQ-mean-L 1} for $b_{1} \not= 0$ because it is similarly proved for $b_{1} = 0$. Since $\sr{E}^{(r)}_{1} = o(1)$ by \mypro{2LQ-3f}, it follows from \eq{2LQ-L0-1} and \eq{2LQ-A 1} that
\begin{align*}
  \frac 1r \dd{P}[L^{(r)} = 0] & = \frac {b_{1} e^{\beta_{1} \ell_{1}}}{c_{1}(e^{\beta_{1} \ell_{1}} - 1)} \frac {c_{1}b_{2} (e^{\beta_{1} \ell_{1}} - 1)} {c_{2}b_{1} + c_{1}b_{2} (e^{\beta_{1} \ell_{1}} - 1)} + o(1)  \nonumber\\
  & = \frac {b_{1}b_{2} e^{\beta_{1} \ell_{1}}} {c_{2}b_{1} + c_{1}b_{2} (e^{\beta_{1} \ell_{1}} - 1)} + o(1).
\end{align*}
On the other hand, from \eq{2LQ-nu-tilde}, 
\begin{align*}
   & \lim_{r \downarrow 0} \dd{E}[r L^{(r)}] \\
   & \quad = \frac {c_{1} \sigma_{1}^{2} (e^{\beta_{1} \ell_{1}} - 1 - \beta_{1} \ell_{1})} {2 b_{1} (e^{\beta_{1} \ell_{1}} - 1)} \frac {c_{1}b_{2} (e^{\beta_{1} \ell_{1}} - 1)} {c_{2}b_{1} + c_{1}b_{2} (e^{\beta_{1} \ell_{1}} - 1)} + \left(\ell_{1} + \frac {c_{2} \sigma_{2}^{2}}{2b_{2}} \right) \frac {c_{2}b_{1}} {c_{2}b_{1} + c_{1}b_{2} (e^{\beta_{1} \ell_{1}} - 1)}\\
   & \quad = \frac {c_{1}^{2} b_{2}^{2} \sigma_{1}^{2} (e^{\beta_{1} \ell_{1}} - 1) + 2 b_{1} b_{2} \ell_{1} (c_{2} b_{1} - c_{1} b_{2}) + c_{2}^{2} b_{1}^{2} \sigma_{2}^{2}} {2 b_{1} b_{2} (c_{2}b_{1} + c_{1}b_{2} (e^{\beta_{1} \ell_{1}} - 1))}.
\end{align*}
Hence, we have \eq{2LQ-mean-L 1}.

\subsection{Proof of \mypro{2LQ-exponential}}
\label{myapp:2LQ-exponential-proof}

We prove \mypro{2LQ-exponential} only for $b_{1} \not= 0$ because the case for $b_{1} = 0$ can be similarly proved. We first consider this 2-level $M/M/1$ queue without index $r$ for notational convenience. Because of the exponential assumption, this model can be described by a continuous-time Markov chain. However, $L(\cdot)$ itself cannot be a Markov chain because the arrival rate may depend on the queue length at the last arrival instant as discussed at the end of \sectn{main}. Taking this into account, we describe this model by the continuous time Markov chain with state space
\begin{align*}
  S = \{(j,\ell);j=1,2, \ell=0,1,\ldots, \ell_{1}\} \cup \{(3,\ell); \ell \ge \ell_{1} + 1\},
\end{align*}
and its transition diagram is depicted in \myfig{T-diagram}. Denote this Markov chain by $X_{\exp}(\cdot) \equiv \{(J(t), L(t)); t \ge 0\}$. 
\begin{figure}[h] 
   \centering
   \includegraphics[height=4.5cm]{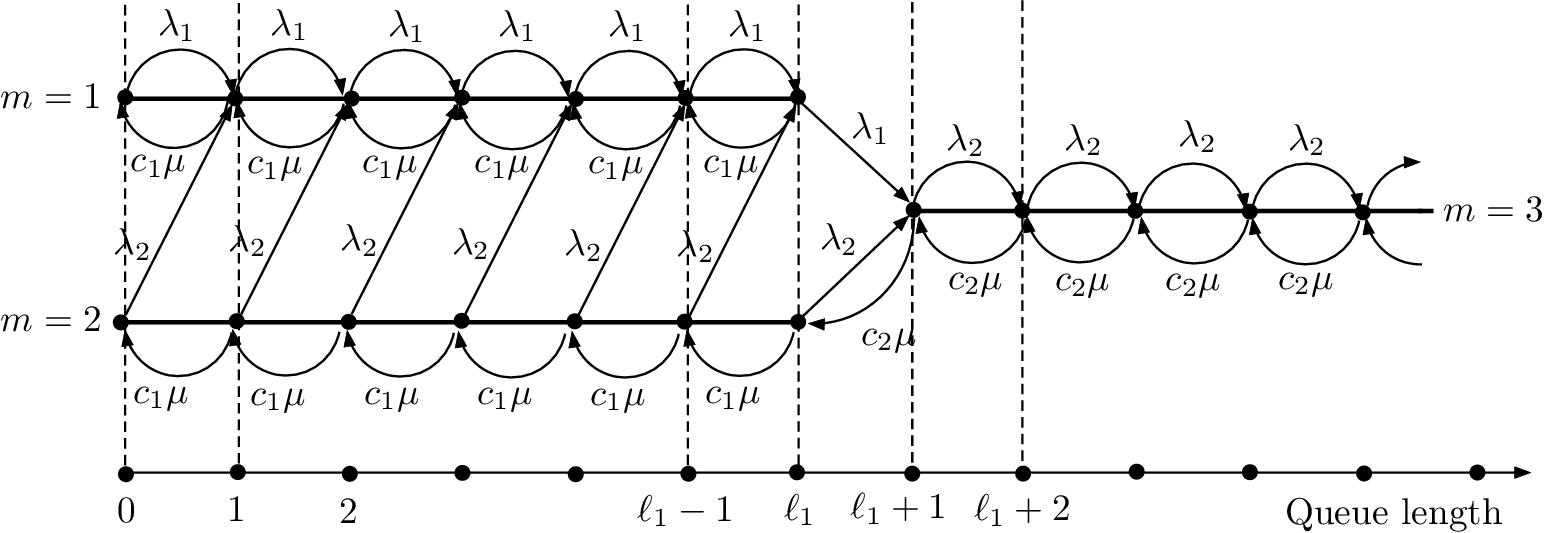} 
  \caption{Transition diagram}
   \label{myfig:T-diagram}
\end{figure}

We partition the state space $S$ of $X_{\exp}(\cdot)$ into the following three disjoint subsets.
\begin{align*}
  S_{j} = \{(j,\ell) \in S; \ell \in [0,\ell_{1}]\}, \quad j=1,2, \qquad S_{3} = \{(3,\ell): \ell \ge \ell_{1}+1\}.
\end{align*}
By the assumptions of \mypro{2LQ-exponential}, $X_{M}(\cdot)$ has a stationary distribution. Let $(J,L)$ be a random vector subject to this stationary distribution, and define $p_{j,\ell}$ as
\begin{align*}
  p_{j,\ell} = \dd{P}[J=j, L=\ell], \qquad j=1,2,3, (j,\ell) \in S_{j}.
\end{align*}

Obviously, the stationary distribution, $\{p_{j,\ell}; (j,\ell) \in S\}$, is uniquely determined by the following stationary equations. For state in $S_{1}$,
\begin{align}
\label{eq:p10}
  & \lambda_{1} p_{1,0} = c_{1} \mu p_{1,1}, \\
\label{eq:p1i}
  & (\lambda_{1} + c_{1} \mu) p_{1,\ell} = \lambda_{1} p_{1,\ell-1} + c_{1} \mu p_{1,\ell+1} + \lambda_{2} p_{2,\ell-1}, \quad \ell \in [1,\ell_{1}-1],\\
\label{eq:p1i+1}
  & (\lambda_{1} + c_{1} \mu) p_{1,\ell_{1}} = \lambda_{1} p_{1,\ell_{1}-1} + \lambda_{2} p_{2,\ell_{1}-1}, 
\end{align}
and, for states in $S_{2}$,
\begin{align}
\label{eq:p20}
  & \lambda_{2} p_{2,0} = c_{1} \mu p_{2,1}, \\
\label{eq:p2i}
  & (\lambda_{2} + c_{1} \mu) p_{2,i} = c_{1} \mu p_{2,\ell+1}, \qquad \ell \in [1,\ell_{1}-1],\\
  \label{eq:p2i+1}
  & (\lambda_{2} + c_{1} \mu) p_{2,\ell_{1}} = c_{2} \mu p_{3,\ell_{1}+1}, 
\end{align}
For states in $S_{3}$, it follows from the equation for $(3,\ell_{1}+1)$ and the local balance that
\begin{align}
\label{eq:p-ell+1}
 & (\lambda_{2} + c_{2} \mu) p_{3,\ell_{1}+1} = \lambda_{1} p_{1,\ell_{1}} + \lambda_{2} p_{2,\ell_{1}} + c_{2} \mu p_{\ell_{1}+2},\\
\label{eq:pi1+}
 & \lambda_{2} p_{3,\ell} = c_{2} \mu p_{3,\ell+1} , \qquad \ell \ge \ell_{1}+1.
\end{align}
In what follows, we use the following notations.
\begin{align*}
  \rho_{j,k} =  \frac {\lambda_{j}}{c_{k} \mu }, \qquad \gamma_{j,k} = \frac {c_{j}\mu} {c_{1} \mu + \lambda_{k}}, \qquad j,k = 1,2.
\end{align*}

We first note that, from \eq{pi1+},
\begin{align}
\label{eq:pi+}
 & p_{3,\ell} = \rho_{22}^{\ell - (\ell_{1}+1)} p_{3,\ell_{1}+1}, \qquad \ell \ge \ell_{1}+1.
\end{align}
Thus, $p_{3,\ell}$ for $\ell \ge \ell_{1}+1$ is simply determined by $p_{3,\ell_{1}+1}$.

We next consider $p_{2,\ell}$ for $\ell \in [0,\ell_{1}-1]$. From \eq{p20} and \eq{p2i}, we have
\begin{align}
\label{eq:p20i}
 & p_{2,0} = \frac {c_{1} \mu} {\lambda_{2}} p_{2,1} = \rho_{21}^{-1}  p_{2,1}, \\
\label{eq:p2i-ell}
 & p_{2,\ell} = \gamma_{12} p_{2,\ell+1} = \gamma_{12}^{\ell_{1} - \ell} p_{2,\ell_{1}}, \quad \ell \in [1,\ell_{1}-1].
\end{align}
Thus, $p_{2,\ell}$ for $\ell \in [0,\ell_{1}-1]$ is determined by $p_{2,\ell_{1}}$. Furthermore, \eq{p2i-ell} can be written by \eq{p20} as
\begin{align}
\label{eq:p2i-ell-}
 & p_{2,\ell} = \gamma_{12}^{-1} p_{2,\ell-1} = \gamma_{12}^{-\ell+1} p_{2,1} = \gamma_{12}^{-\ell+1} \rho_{21} p_{2,0}, \quad \ell \in [1,\ell_{1}-1].
\end{align}

Let us consider relations among the stationary probabilities, $p_{1,0}, p_{1,\ell_{1}}$, $p_{2,\ell_{1}}$ and $p_{3,\ell_{1}+1}$ at the boundaries states of $S_{j}$ for $j=1,2,3$. We call them boundary probabilities. There is one more boundary probability, $p_{2,0}$, but this probability is ignorable compared with $p_{2,\ell_{1}}$ for large $\ell_{1}$ as $\rho_{21} p_{2,0} = \gamma_{12}^{\ell_{1}-1} p_{2,\ell_{1}-1} = \gamma_{12}^{\ell_{1}} p_{2,\ell_{1}}$ by \eq{p2i-ell-} and \eq{p2i-ell}, so we do not consider it. We first note that, from \eq{p2i+1},
\begin{align}
   \label{eq:p2i-ell+}
 & p_{2,\ell_{1}} = \gamma_{22} p_{3,\ell_{1}+1}.
\end{align}
Since $c_{2} \mu p_{\ell_{1}+2} = \lambda_{2} p_{3,\ell_{1}+1}$ by \eq{pi1+} for $\ell=\ell_{1}+1$, \eq{p-ell+1} can be written as
\begin{align}
\label{eq:p1-ell+1}
   & p_{3,\ell_{1}+1} = \rho_{12} p_{1,\ell_{1}} + \rho_{22} p_{2,\ell_{1}},
\end{align}
and, substituting $p_{3,\ell_{1}+1} = \gamma_{22}^{-1} p_{2,\ell_{1}}$ of \eq{p2i-ell+} to this equation, we have
\begin{align*}
  (\gamma_{22}^{-1} - \rho_{22}) p_{2,\ell_{1}} = \rho_{12} p_{1,\ell_{1}}.
\end{align*}
Since $\gamma_{22}^{-1} - \rho_{22} = \frac {c_{1}\mu + \lambda_{2}}{c_{2} \mu} - \frac {\lambda_{2}}{c_{2}\mu} = \frac {c_{1}} {c_{2}} = \rho_{12} \rho_{11}^{-1}$, this equation yields
\begin{align}
  \label{eq:p1-ell+11}
   & p_{2,\ell_{1}} = \rho_{11} p_{1,\ell_{1}}.
\end{align}
Thus, the boundary probabilities $p_{1,\ell_{1}}$, $p_{2,\ell_{1}}$ and $p_{3,\ell_{1}+1}$ are explicitly related by \eq{p2i-ell+} and \eq{p1-ell+11}. We also like to see such a relation between $p_{1,0}$ and $p_{1,\ell_{1}}$, but this is not easy.

To resolve this problem, we consider $p_{1,\ell}$ for $\ell \in [0,\ell_{1}-1]$. From \eq{p1i},
\begin{align*}
  \lambda_{1} p_{1,\ell} - c_{1} \mu p_{1,\ell+1} = \lambda_{1} p_{1,\ell-1} - c_{1} \mu p_{1,\ell} + \lambda_{2} p_{2,\ell-1}, \qquad \ell \in [1,\ell_{1}-1],
\end{align*}
then this equation can be written as
\begin{align*}
  \rho_{11} p_{1,\ell} - p_{1,\ell+1} = \rho_{11} p_{1,\ell-1} - p_{1,\ell} + \rho_{21} p_{2,\ell-1}, \qquad \ell \in [1,\ell_{1}-1].
\end{align*}
Summing up both sides of this equation for $\ell \in [1,m]$ for $m \in [1,\ell_{1}-1]$, we have
\begin{align}
\label{eq:p1-j}
  \rho_{11} p_{1,m} - p_{1,m+1} = \rho_{21} \sum_{\ell=1}^{m} p_{2,\ell-1}, \qquad m \in [0,\ell_{1}-1],
\end{align}
because $\rho_{11} p_{1,0} - p_{1,1} = 0$ by \eq{p10}. This equation is valid also for $m=0$ by the convention that the empty sum vanishes, which we always take.

Hence, applying \eq{p20i} and \eq{p2i-ell} to \eq{p1-j}, we have
\begin{align*}
  \rho_{11} p_{1,m} - p_{1,m+1} & = \rho_{21} \left( \rho_{21}^{-1} p_{2,1} + \sum_{\ell=1}^{m-1} \gamma_{12}^{\ell_{1} - \ell} p_{2,\ell_{1}} \right) \nonumber\\
  & = \left( 1 + \rho_{21} \sum_{\ell=1}^{m-1} \gamma_{12}^{- (\ell-1)} \right) \gamma_{12}^{\ell_{1} - 1} p_{2,\ell_{1}} \nonumber\\
  & = \left( 1 + \rho_{21} \frac {(\gamma_{12}^{- (m-1)} - 1) \gamma_{12}} {1 - \gamma_{12}} \right) \gamma_{12}^{\ell_{1} - 1} p_{2,\ell_{1}}, \qquad m \in [1,\ell_{1}-1].
\end{align*}
Multiplying both sides by $\rho_{11}^{-(m+1)}$,
\begin{align*}
 & \rho_{11}^{-m} p_{1,m} - \rho_{11}^{-(m+1)} p_{1,m+1}  \nonumber\\
  & \quad = \rho_{11}^{-(m+1)} \left( 1 + \rho_{21} \frac {(\gamma_{12}^{- (m-1)} - 1) \gamma_{12}} {1 - \gamma_{12}} 1(m \ge 1) \right) \gamma_{12}^{\ell_{1} - 1} p_{2,\ell_{1}}, \qquad m \in [0,\ell_{1}-1].
\end{align*}
Summing up both sides of this equation from $m=0$ to $m=k - 1\le \ell_{1}-1$, we have
\begin{align*}
 & p_{1,0} - \rho_{11}^{-k} p_{1,k} = \sum_{m=0}^{k-1} \rho_{11}^{-(m+1)} \gamma_{12}^{\ell_{1} - 1} p_{2,\ell_{1}} + \sum_{m=1}^{k-1} \frac { (\rho_{11} \gamma_{12})^{- (m-1)} - \rho_{11}^{-(m-1)} } {1 - \gamma_{12}} \rho_{11}^{-2} \rho_{21} \gamma_{12}^{\ell_{1}} p_{2,\ell_{1}}\nonumber\\
 & \quad = \frac {\rho_{11}^{-k} - 1} {1 - \rho_{11}} \gamma_{12}^{\ell_{1} - 1} p_{2,\ell_{1}} + \left(\frac {(\rho_{11} \gamma_{12})^{-(k-1)} - 1}{(\rho_{11} \gamma_{12})^{-1} - 1} - \frac {\rho_{11}^{-(k-1)} - 1} {\rho_{11}^{-1} - 1} \right)  \frac {\rho_{11}^{-2} \rho_{21} \gamma_{12}^{\ell_{1}} p_{2,\ell_{1}}} {1 - \gamma_{12}} \nonumber\\
 & \quad = \frac {\rho_{11}^{-k} - 1} {1 - \rho_{11}} \gamma_{12}^{\ell_{1} - 1} p_{2,\ell_{1}} + \left(\frac {(\rho_{11} \gamma_{12})^{-(k-1)} - 1}{(\rho_{11} \gamma_{12})^{-1} - 1} \rho_{11}^{-2} - \frac {\rho_{11}^{-k} - \rho_{11}^{-1}} {1 - \rho_{11}} \right)  \frac {\rho_{21} \gamma_{12}^{\ell_{1}} p_{2,\ell_{1}}} {1 - \gamma_{12}}
\end{align*}
for $ k \in [1,\ell_{1}]$. Hence, multiplying both sides of this equation by $\rho_{11}^{k}$, we have
\begin{align}
\label{eq:p1k}
  p_{1,k} & = \rho_{11}^{k} p_{1,0} - \frac {1 - \rho_{11}^{k}} {1 - \rho_{11}} \gamma_{12}^{\ell_{1} - 1} p_{2,\ell_{1}} + \frac {1 - \rho_{11}^{k-1}} {1 - \rho_{11}} \frac {\rho_{21}} {1 - \gamma_{12}} \gamma_{12}^{\ell_{1}} p_{2,\ell_{1}} \nonumber\\
  & \quad - \frac {\gamma_{12}^{-(k-1)} - \rho_{11}^{k-1}}{(\rho_{11} \gamma_{12})^{-1} - 1} \frac {\rho_{11}^{-1} \rho_{21}} {1 - \gamma_{12}} \gamma_{12}^{\ell_{1}} p_{2,\ell_{1}}, \qquad k \in [1,\ell_{1}].
\end{align}
In particular, for $k=\ell_{1}$, substituting $p_{2,\ell_{1}} = \rho_{11} p_{1,\ell_{1}}$ of \eq{p1-ell+11}, we have
\begin{align}
\label{eq:p1ell}
  \rho_{11}^{\ell_{1}} \frac{p_{1,0}} {p_{1,\ell_{1}}} & = 1 + \frac {1 - \rho_{11}^{\ell_{1}}} {1 - \rho_{11}} \gamma_{12}^{\ell_{1} - 1} \rho_{11} - \frac {1 - \rho_{11}^{\ell_{1}-1}} {1 - \rho_{11}} \frac {\rho_{11} \rho_{21}} {1 - \gamma_{12}} \gamma_{12}^{\ell_{1}} \nonumber\\
  & \quad + \frac {\rho_{11} \gamma_{12} - \rho_{11}^{\ell_{1}} \gamma_{12}^{\ell_{1}}}{(\rho_{11} \gamma_{12})^{-1} - 1} \frac {\rho_{11}^{-1} \rho_{21}} {1 - \gamma_{12}}.
\end{align}
Thus, $p_{1,0}$ is explicitly related to $p_{1,\ell_{1}}$.

We now apply these computations to the $r$-th 2-level $M/M/1$ queue, and compute the limit of its scaled stationary distribution through moment generating functions. For the $r$-th system, we put the superscript $^{(r)}$ as we have used for the $r$-th 2-level $GI/G/1$ queue. For example, $p^{(r)}_{j,\ell}$ is the stationary probability of $\{(J,L) = (j,\ell)\}$ for $(j,\ell) \in S$.

Define the moment generating functions $g^{(r)}_{j}(\theta)$ for $j=1,2,3$ as
\begin{align*}
  g^{(r)}_{j}(\theta) = \sum_{k=0}^{\ell^{(r)}_{1}} e^{r \theta k} p^{(r)}_{j,k}, \quad \theta \in \dd{R}, j=1,2, \qquad g^{(r)}_{3}(\theta) = \sum_{k=\ell^{(r)}_{1}+1}^{\infty} e^{r\theta k} p^{(r)}_{3,k}, \quad \theta \le 0,
\end{align*}
and define $g^{(r)}(\theta) = \sum_{j=1}^{3} g^{(r)}_{j}(\theta)$. Obviously, $g^{(r)}(\theta)$ is the moment generating function of $rL^{(r)}$. Thus, our task is to compute the limit of $g^{(r)}(\theta)$ as $r \downarrow 0$. To this end, we compute $g^{(r)}_{j}(\theta)$ for $j=1,2,3$, using the normalization condition that $g^{(r)}(0) = 1$. However, it should be noted that this condition is not sufficient to prove the weak convergence of the distribution of $rL^{(r)}$, for which it is required that $\lim_{\theta \uparrow 0} \lim_{r \downarrow 0} g^{(r)}(\theta) = 1$. This convergence will be verified through explicitly computing $g^{(r)}_{j}(\theta)$ for $j=1,2,3$.

We first compute $g^{(r)}_{j}(\theta)$ for $j=2,3$ because they are easier. It is immediate from \eq{p2i-ell}, \eq{p20i} and \eq{pi+} that
\begin{align}
\label{eq:gr2}
  g_{2}^{(r)}(\theta) & = p^{(r)}_{2,0} + e^{\ell^{(r)}_{1} r \theta} \sum_{\ell=1}^{\ell^{(r)}_{1}} e^{-(\ell^{(r)}_{1} - \ell) r \theta} (\gamma^{(r)}_{12})^{\ell^{(r)}_{1}-\ell} p^{(r)}_{2,\ell^{(r)}_{1}} \nonumber\\
  & =  \left( (\rho^{(r)}_{21})^{-1} (\gamma^{(r)}_{12})^{\ell^{(r)}_{1} - 1} + e^{r \theta} \frac { e^{\ell^{(r)}_{1} r \theta} - (\gamma^{(r)}_{12})^{\ell^{(r)}_{1}}} {e^{r \theta} - \gamma^{(r)}_{12}} \right) p^{(r)}_{2,\ell^{(r)}_{1}},\\
\label{eq:gr3}
  g^{(r)}_{3}(\theta) & = \frac {1}{1 - e^{r \theta} \rho^{(r)}_{2}} e^{(\ell^{(r)}_{1} + 1)r \theta} p^{(r)}_{3,\ell^{(r)}_{1}+1}.
\end{align}

On the other hand, we compute $g^{(r)}_{1}(\theta)$ from \eq{p1k} as given below.
\begin{align}
\label{eq:gr1-1}
    g^{(r)}_{1}(\theta) & = \sum_{k=0}^{\ell^{(r)}_{1}} (e^{r\theta} \rho^{(r)}_{1})^{k} p^{(r)}_{1,0} - \sum_{k=1}^{\ell^{(r)}_{1}} \frac {e^{r \theta k} - (e^{r\theta} \rho^{(r)}_{1})^{k}} {1 - \rho^{(r)}_{1}} (\gamma^{(r)}_{12})^{\ell^{(r)}_{1} - 1} p^{(r)}_{2,\ell^{(r)}_{1}} \nonumber\\
    & \quad + \sum_{k=1}^{\ell^{(r)}_{1}} \frac {e^{r \theta (k-1)} - (e^{r \theta} \rho^{(r)}_{1})^{k-1}} {1 - \rho^{(r)}_{1}} \frac {e^{r\theta} \rho^{(r)}_{21}} {1 - \gamma^{(r)}_{12}} (\gamma^{(r)}_{12})^{\ell^{(r)}_{1}} p^{(r)}_{2,\ell^{(r)}_{1}} \nonumber\\
  & \quad - \sum_{k=1}^{\ell^{(r)}_{1}} \frac {(e^{r\theta} (\gamma^{(r)}_{12})^{-1})^{k-1} - (e^{r\theta} \rho^{(r)}_{1})^{k-1}} {(\rho^{(r)}_{1} \gamma^{(r)}_{12})^{-1} - 1} \frac {e^{r \theta} (\rho^{(r)}_{1})^{-1} \rho^{(r)}_{21}} {1 - \gamma^{(r)}_{12}} (\gamma^{(r)}_{12})^{\ell^{(r)}_{1}} p^{(r)}_{2,\ell_{1}}.
\end{align}
To ease further compuations, we introduce the following functions.
\begin{align*}
  f^{(r)}_{1}(\theta) = \sum_{k=0}^{\ell^{(r)}_{1}-1} e^{r \theta k}, \qquad f^{(r)}_{\rho_{1}}(\theta) = \sum_{k=0}^{\ell^{(r)}_{1}-1} (e^{r \theta} \rho^{(r)}_{1})^{k}, \qquad f^{(r)}_{\gamma^{-1}_{12}}(\theta) = \sum_{k=0}^{\ell^{(r)}_{1}-1} (e^{r \theta} (\gamma^{(r)}_{12})^{-1})^{k},
\end{align*}
which are computed as
\begin{align*}
 & f^{(r)}_{1}(\theta) = \frac {1 - (e^{r \theta})^{\ell^{(r)}_{1}}} {1 - e^{r \theta}} 1(\theta \not=0) + \ell^{(r)}_{1} 1(\theta =0), \qquad f^{(r)}_{\rho_{1}}(\theta) = \frac {1 - (e^{r \theta} \rho^{(r)}_{1})^{\ell^{(r)}_{1}}} {1 - e^{r \theta}\rho^{(r)}_{1}},\\
 & f^{(r)}_{\gamma^{-1}_{12}}(\theta) = \frac {e^{r \theta \ell^{(r)}_{1}} - (\gamma^{(r)}_{12})^{\ell^{(r)}_{1}}} {e^{r \theta} - \gamma^{(r)}_{12}} (\gamma^{(r)}_{12})^{-\ell^{(r)}_{1}+1} .
\end{align*}
Then, using these function, \eq{gr1-1} becomes
\begin{align}
\label{eq:gr1-2}
  g^{(r)}_{1}(\theta) & = \left(f^{(r)}_{\rho_{1}}(\theta) + (e^{r\theta} \rho^{(r)}_{1})^{\ell^{(r)}_{1}}\right) p^{(r)}_{1,0} - \left(f^{(r)}_{1}(\theta) - \rho^{(r)}_{1} f^{(r)}_{\rho_{1}}(\theta)\right) \frac {e^{r\theta} (\gamma^{(r)}_{12})^{\ell^{(r)}_{1} - 1}} {1 - \rho^{(r)}_{1}}  p^{(r)}_{2,\ell^{(r)}_{1}} \nonumber\\
  & \quad + \left(f^{(r)}_{1}(\theta) - f^{(r)}_{\rho_{1}}(\theta)\right) \frac {e^{r\theta} \rho^{(r)}_{21}} {(1 - \rho^{(r)}_{1}) (1 - \gamma^{(r)}_{12})} (\gamma^{(r)}_{12})^{\ell^{(r)}_{1}} p^{(r)}_{2,\ell^{(r)}_{1}} \nonumber\\
  & \quad - \left( f^{(r)}_{\gamma^{-1}_{12}}(\theta) - f^{(r)}_{\rho_{1}}(\theta) \right) \frac {e^{r \theta} (\rho^{(r)}_{1})^{-1} \rho^{(r)}_{21}} {((\rho^{(r)}_{1} \gamma^{(r)}_{12})^{-1} - 1)(1 - \gamma^{(r)}_{12})} (\gamma^{(r)}_{12})^{\ell^{(r)}_{1}} p^{(r)}_{2,\ell^{(r)}_{1}}
\end{align}

We now compute the limits of $g^{(r)}_{j}(0)$ for $j=1,2,3$. For this, we note that the following asymptotic properties are obtained from the assumptions (\sect{sequence}.b)--(\sect{sequence}.e), 
\begin{align}
\label{eq:gamma12 ex-decay}
  (\gamma^{(r)}_{12})^{\ell^{(r)}_{1}} = O(\gamma_{12}^{\ell_{1}/r}), \qquad \mbox{as } r \downarrow 0.
\end{align}
Hence, $(\gamma^{(r)}_{12})^{\ell^{(r)}_{1}}$ decays exponentially fast as the function of $1/r$. On the other hand,
\begin{align}
\label{eq:1-rho-limit}
 & \lim_{r \downarrow 0} (1 - \rho^{(r)}_{\ell})/r = \beta_{\ell},  \qquad \ell=1,2,\\
\label{eq:rho-ell-limit}
 & \lim_{r \downarrow 0} (\rho^{(r)}_{1})^{\ell^{(r)}_{1}} = \lim_{r \downarrow 0} \left(1 - \beta_{1} r + o(r) \right)^{\ell_{1}/r + o(r)} = e^{- \beta_{1} \ell_{1}},
\end{align}
where recall that $\beta_{\ell} = b_{\ell}/c_{\ell}$. Another important asymptotic properties are the ratios of the boundary probabilities. They are presented in the next lemma.

\begin{lemma}
\label{mylem:boundary-p}
For the sequence of the 2-level $M/M/1$ queue satisfying (\sect{sequence}.b)--(\sect{sequence}.e),
\begin{align}
\label{eq:boundary-p}
 & \lim_{r \downarrow 0} \frac {p^{(r)}_{2,\ell_{1}}} {p^{(r)}_{1,\ell_{1}}} = 1, \qquad \lim_{r \downarrow 0} \frac {p^{(r)}_{3,\ell_{1}+1}} {p^{(r)}_{1,\ell_{1}}} = \frac {\lambda_{1} + \lambda_{2}} {\lambda_{2}}, \qquad
\lim_{r \downarrow 0} \frac {p^{(r)}_{1,0}} {p^{(r)}_{1,\ell_{1}}} = \frac {\lambda_{1} + \lambda_{2}} {\lambda_{2}} e^{\beta_{1} \ell_{1}}. \end{align}
\end{lemma}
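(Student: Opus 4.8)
The plan is to read off all three limits directly from the explicit boundary identities already established, namely \eq{p1-ell+11}, \eq{p2i-ell+} (or equivalently \eq{p1-ell+1}) and \eq{p1ell}, together with the elementary asymptotics \eq{gamma12 ex-decay}, \eq{1-rho-limit}, \eq{rho-ell-limit}. From (\sect{sequence}.b), (\sect{sequence}.c) and the identities $\lambda_{i}=c_{i}\mu$ one has the scalar limits $\rho^{(r)}_{11}=\rho^{(r)}_{1}\to1$, $\rho^{(r)}_{22}=\rho^{(r)}_{2}\to1$, $\rho^{(r)}_{21}\to\lambda_{2}/\lambda_{1}$, $(\gamma^{(r)}_{22})^{-1}=(c^{(r)}_{1}\mu^{(r)}+\lambda^{(r)}_{2})/(c^{(r)}_{2}\mu^{(r)})\to(\lambda_{1}+\lambda_{2})/\lambda_{2}$ and $\gamma^{(r)}_{12}\to\gamma:=\lambda_{1}/(\lambda_{1}+\lambda_{2})\in(0,1)$, hence also $1-\gamma^{(r)}_{12}\to\lambda_{2}/(\lambda_{1}+\lambda_{2})>0$. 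As throughout this appendix we take $b_{1}\neq0$, so that $\beta_{1}\neq0$; the probabilities $p^{(r)}_{j,\ell}$ are strictly positive by irreducibility and positive recurrence of $X^{(r)}_{\exp}(\cdot)$, so the ratios make sense. The first limit is then immediate from \eq{p1-ell+11}, since $p^{(r)}_{2,\ell_{0}}/p^{(r)}_{1,\ell_{0}}=\rho^{(r)}_{11}\to1$, and the second from \eq{p2i-ell+}, since $p^{(r)}_{3,\ell_{0}+1}/p^{(r)}_{1,\ell_{0}}=(\gamma^{(r)}_{22})^{-1}\rho^{(r)}_{11}\to(\lambda_{1}+\lambda_{2})/\lambda_{2}$.

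The third limit is the substantive one, and I would obtain it by letting $r\downarrow0$ term by term in \eq{p1ell}, whose right-hand side has the form $1+\mathrm{I}-\mathrm{II}+\mathrm{III}$, where $\mathrm{I}$ carries the factor $(\gamma^{(r)}_{12})^{\ell^{(r)}_{0}-1}$ and $\mathrm{II}$ the factor $(\gamma^{(r)}_{12})^{\ell^{(r)}_{0}}$, and both also carry the factor $(1-\rho^{(r)}_{11})^{-1}$. This is the only point worth flagging as an obstacle: by \eq{1-rho-limit} with $b_{1}\neq0$ one has $1-\rho^{(r)}_{11}\sim\beta_{1}r$, while $1-(\rho^{(r)}_{11})^{\ell^{(r)}_{0}}\to1-e^{-\beta_{1}\ell_{0}}\neq0$ by \eq{rho-ell-limit}, so the prefactors in $\mathrm{I}$ and $\mathrm{II}$ blow up like $1/r$; however they are multiplied by $(\gamma^{(r)}_{12})^{\ell^{(r)}_{0}}$ or $(\gamma^{(r)}_{12})^{\ell^{(r)}_{0}-1}$, both of which are $O(\gamma^{\ell_{0}/r})$ with $0<\gamma<1$ by \eq{gamma12 ex-decay}, and the geometric decay dominates the polynomial growth. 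Since the remaining factors in $\mathrm{I}$ and $\mathrm{II}$ are bounded (using $1-\gamma^{(r)}_{12}$ bounded away from $0$), this yields $\mathrm{I}\to0$ and $\mathrm{II}\to0$.

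It remains to evaluate $\lim\mathrm{III}$, the last term of \eq{p1ell}. Because $\rho^{(r)}_{11}\gamma^{(r)}_{12}\to\gamma<1$, the subtracted power $(\rho^{(r)}_{11}\gamma^{(r)}_{12})^{\ell^{(r)}_{0}}\to0$, so the numerator of its first fraction tends to $\gamma$ and the denominator to $\gamma^{-1}-1$, whence that fraction tends to $\gamma^{2}/(1-\gamma)$; the trailing fraction $(\rho^{(r)}_{11})^{-1}\rho^{(r)}_{21}/(1-\gamma^{(r)}_{12})$ tends to $(\lambda_{2}/\lambda_{1})/(1-\gamma)$. Substituting $\gamma=\lambda_{1}/(\lambda_{1}+\lambda_{2})$, so that $\gamma/(1-\gamma)=\lambda_{1}/\lambda_{2}$, the product tends to $(\lambda_{1}/\lambda_{2})^{2}\cdot(\lambda_{2}/\lambda_{1})=\lambda_{1}/\lambda_{2}$, that is, $\mathrm{III}\to\lambda_{1}/\lambda_{2}$. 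Hence the right-hand side of \eq{p1ell} converges to $1+\lambda_{1}/\lambda_{2}=(\lambda_{1}+\lambda_{2})/\lambda_{2}$, while its left-hand side is $(\rho^{(r)}_{11})^{\ell^{(r)}_{0}}\,p^{(r)}_{1,0}/p^{(r)}_{1,\ell_{0}}$ with $(\rho^{(r)}_{11})^{\ell^{(r)}_{0}}\to e^{-\beta_{1}\ell_{0}}$ by \eq{rho-ell-limit}; dividing out gives $p^{(r)}_{1,0}/p^{(r)}_{1,\ell_{0}}\to\frac{\lambda_{1}+\lambda_{2}}{\lambda_{2}}e^{\beta_{1}\ell_{0}}$, as claimed. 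Apart from the growth-versus-decay bookkeeping in $\mathrm{I}$ and $\mathrm{II}$, every step is a direct substitution of the scalar limits collected at the outset.
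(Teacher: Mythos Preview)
Your proposal is correct and follows essentially the same route as the paper's own proof: the first two limits are read off directly from \eq{p1-ell+11} and \eq{p2i-ell+}, and the third from \eq{p1ell} together with \eq{rho-ell-limit}. The paper's argument is terser---it simply writes the limit of the right-hand side of \eq{p1ell} as $1$ plus the last term, silently discarding the two middle terms carrying $(\gamma^{(r)}_{12})^{\ell^{(r)}_{0}}$---whereas you spell out explicitly why the exponential decay from \eq{gamma12 ex-decay} beats the $1/r$ blow-up from $(1-\rho^{(r)}_{11})^{-1}$, which is a welcome clarification but not a different method.
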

\begin{proof}
The first two limits in \eq{boundary-p} aer immediate from \eq{p1-ell+11} and \eq{p2i-ell+} because
\begin{align*}
  \lim_{r \downarrow 0} \gamma^{(r)}_{22} = \lim_{r \downarrow 0} \frac {c^{(r)}_{2} \mu^{(r)}} {c^{(r)}_{1} \mu + \lambda^{(r)}_{2}} = \frac {c_{2} \mu} {c_{1} \mu + \lambda_{2}} = \frac {\lambda_{2}} {\lambda_{1} + \lambda_{2}}.
\end{align*}
For the third limit, we have, from \eq{p1ell} for the $r$-th system,
\begin{align*}
  \lim_{r \downarrow 0} \frac{p^{(r)}_{1,0}} {p^{(r)}_{1,\ell_{1}}} & = \lim_{r \downarrow 0} (\rho^{(r)}_{1})^{-\ell^{(r)}_{1}} \left( 1 + \frac {\rho^{(r)}_{1} \gamma^{(r)}_{12}} {(\rho^{(r)}_{1} \gamma^{(r)}_{12})^{-1} - 1} \frac {(\rho^{(r)}_{1})^{-1} \rho^{(r)}_{21}} {1 - \gamma^{(r)}_{12}}\right).
\end{align*}
Hence, applying \eq{rho-ell-limit}, we have the third limit of \eq{boundary-p}.
\end{proof}

We now tentatively assume that there exists finite $\widetilde{p}_{1,\ell_{1}}$ such that
\begin{align}
\label{eq:tp1-ell 1}
  \widetilde{p}_{1,\ell_{1}} = \lim_{r \downarrow 0} \frac {p^{(r)}_{1,\ell^{(r)}_{1}}} {r}.
\end{align}
The existence of this $\widetilde{p}_{1,\ell_{1}}$ will be verified at the final step (see \eq{gr0-limit}). Applying these asymptotic properties, it follows from \eq{gr2} and \eq{gr3} with help of \eq{tp1-ell 1} that
\begin{align}
\label{eq:gr20-limit}
  \lim_{r \downarrow 0} g_{2}^{(r)}(0) & =  \lim_{r \downarrow 0} \left( (\rho^{(r)}_{21})^{-1} (\gamma^{(r)}_{12})^{\ell^{(r)}_{1} - 1} + \frac { 1 - (\gamma^{(r)}_{12})^{\ell^{(r)}_{1}}} {1 - \gamma^{(r)}_{12}} \right) p^{(r)}_{2,\ell^{(r)}_{1}} \nonumber\\
  & = \frac {c_{1} \mu + \lambda_{2}} {\lambda_{2}} \lim_{r \downarrow 0} p^{(r)}_{2,\ell^{(r)}_{1}} = \frac {\lambda_{1} + \lambda_{2}} {\lambda_{2}} \lim_{r \downarrow 0} p^{(r)}_{1,\ell^{(r)}_{1}} = 0,\\
\label{eq:gr30-limit}
  \lim_{r \downarrow 0} g^{(r)}_{3}(0) & = \lim_{r \downarrow 0} \frac {1}{1 - \rho^{(r)}_{2}} p^{(r)}_{3,\ell^{(r)}_{1}+1} = \frac {c_{2}}{b_{2}} \lim_{r \downarrow 0} \frac 1r p^{(r)}_{3,\ell^{(r)}_{1}+1} \nonumber\\
  & = \beta_{2}^{-1} \gamma_{22}^{-1} \lim_{r \downarrow 0} \frac 1r p^{(r)}_{2,\ell^{(r)}_{1}} = \frac {\lambda_{1} + \lambda_{2}} {\lambda_{2}} \beta_{2}^{-1} \widetilde{p}_{1,\ell_{1}}.
\end{align}

To compute the limit of $g^{(r)}_{1}(0)$, we first compute it from \eq{gr1-2} as
\begin{align}
\label{eq:gr10}
   g^{(r)}_{1}(0) & = \left(f^{(r)}_{\rho_{1}}(0) + (\rho^{(r)}_{1})^{\ell^{(r)}_{1}}\right) p^{(r)}_{1,0} - \left(f^{(r)}_{1}(0) - \rho^{(r)}_{1} f^{(r)}_{\rho_{1}}(0)\right) \frac {(\gamma^{(r)}_{12})^{\ell^{(r)}_{1} - 1}} {1 - \rho^{(r)}_{1}}  p^{(r)}_{2,\ell^{(r)}_{1}} \nonumber\\
  & \quad + \left(f^{(r)}_{1}(0) - f^{(r)}_{\rho_{1}}(0)\right) \frac {\rho^{(r)}_{21}} {(1 - \rho^{(r)}_{1}) (1 - \gamma^{(r)}_{12})} (\gamma^{(r)}_{12})^{\ell^{(r)}_{1}} p^{(r)}_{2,\ell^{(r)}_{1}} \nonumber\\
  & \quad - \left( f^{(r)}_{\gamma^{-1}_{12}}(0) - f^{(r)}_{\rho_{1}}(0) \right) \frac {(\rho^{(r)}_{1})^{-1} \rho^{(r)}_{21}} {((\rho^{(r)}_{1} \gamma^{(r)}_{12})^{-1} - 1)(1 - \gamma^{(r)}_{12})} (\gamma^{(r)}_{12})^{\ell^{(r)}_{1}} p^{(r)}_{2,\ell^{(r)}_{1}}.
\end{align}
To proceed to further compute, we note that
\begin{align*}
 & \lim_{r \downarrow 0} r f^{(r)}_{1}(0) = \lim_{r \downarrow 0} r \ell^{(r)}_{1} = \ell_{1},\\
 & \lim_{r \downarrow 0} r f^{(r)}_{\rho_{1}}(0) = \lim_{r \downarrow 0} r \frac {1 - (\rho^{(r)}_{1})^{\ell^{(r)}_{1}}} {1 - \rho^{(r)}_{1}} = \beta_{1}^{-1} (1 - e^{- \beta_{1} \ell_{1}}),\\
 & \lim_{r \downarrow 0} f^{(r)}_{\gamma_{12}^{-1}}(0) (\gamma^{(r)}_{12})^{\ell^{(r)}_{1}} = \lim_{r \downarrow 0} \frac {1 - (\gamma^{(r)}_{12})^{\ell^{(r)}_{1}}} {1 - \gamma^{(r)}_{12}} \gamma^{(r)}_{12} = \frac {c_{1} \mu} {\lambda_{2}} = \rho_{21}^{-1}.
\end{align*}
Then, we have, using \mylem{boundary-p},
\begin{align*}
 & \lim_{r \downarrow 0} \left(f^{(r)}_{\rho_{1}}(0) + (\rho^{(r)}_{1})^{\ell^{(r)}_{1}}\right) p^{(r)}_{1,0} = \beta_{1}^{-1} (1 - e^{- \beta_{1} \ell_{1}}) \lim_{r \downarrow 0} \frac 1{r} p^{(r)}_{1,0} \nonumber\\
  & \quad = \beta_{1}^{-1} \left(e^{\beta_{1} \ell_{1}} - 1\right) \frac {\lambda_{1} + \lambda_{2}} {\lambda_{2}} \widetilde{p}_{1,\ell_{1}},\\
  & \lim_{r \downarrow 0} \left(f^{(r)}_{1}(0) - \rho^{(r)}_{1} f^{(r)}_{\rho_{1}}(0)\right) \frac {(\gamma^{(r)}_{12})^{\ell^{(r)}_{1} - 1}} {1 - \rho^{(r)}_{1}}  p^{(r)}_{2,\ell^{(r)}_{1}} = 0,\\
  & \lim_{r \downarrow 0} \left(f^{(r)}_{1}(0) - f^{(r)}_{\rho_{1}}(0)\right) \frac {\rho^{(r)}_{21}} {(1 - \rho^{(r)}_{1}) (1 - \gamma^{(r)}_{12})} (\gamma^{(r)}_{12})^{\ell^{(r)}_{1}} p^{(r)}_{2,\ell^{(r)}_{1}} = 0,
\end{align*}
and
\begin{align*}
 & \lim_{r \downarrow 0} \left( f^{(r)}_{\gamma^{-1}_{12}}(0) - f^{(r)}_{\rho_{1}}(0) \right) \frac {(\rho^{(r)}_{1})^{-1} \rho^{(r)}_{21}} {((\rho^{(r)}_{1} \gamma^{(r)}_{12})^{-1} - 1)(1 - \gamma^{(r)}_{12})} (\gamma^{(r)}_{12})^{\ell^{(r)}_{1}} p^{(r)}_{2,\ell^{(r)}_{1}}\\
 & \quad = \lim_{r \downarrow 0} f^{(r)}_{\gamma^{-1}_{12}}(0) (\gamma^{(r)}_{12})^{\ell^{(r)}_{1}} \frac {(\rho^{(r)}_{1})^{-1} \rho^{(r)}_{21}} {((\rho^{(r)}_{1} \gamma^{(r)}_{12})^{-1} - 1)(1 - \gamma^{(r)}_{12})} p^{(r)}_{2,\ell^{(r)}_{1}} = 0.
\end{align*}

Hence, from \eq{gr10}
\begin{align}
\label{eq:gr10-limit}
  \lim_{r \downarrow 0} g^{(r)}_{1}(0) = \frac {\lambda_{1} + \lambda_{2}} {\lambda_{2}} \beta_{1}^{-1} (e^{\beta_{1} \ell_{1}} - 1) \widetilde{p}_{1,\ell_{1}}.
\end{align}
Hence, by \eq{gr20-limit} and \eq{gr30-limit}, we have
\begin{align}
\label{eq:gr0-limit}
  1 = \lim_{r \downarrow 0} g^{(r)}(0) = \frac {\lambda_{1} + \lambda_{2}} {\lambda_{2}} \left(\beta_{2}^{-1} + \beta_{1}^{-1} (e^{\beta_{1} \ell_{1}} - 1) \right) \widetilde{p}_{1,\ell_{1}}.
\end{align}
This proves that $\widetilde{p}_{1,\ell_{1}}$ indeed exists and is finite, and given by
\begin{align}
\label{eq:tp1-ell 2}
  \widetilde{p}_{1,\ell_{1}} = \frac {\lambda_{2} \beta_{1} \beta_{2}} {(\lambda_{1} + \lambda_{2}) (\beta_{1} + \beta_{2} (e^{\beta_{1} \ell_{1}} - 1))}.
\end{align}

We are now in a final step to find $\widetilde{g}(\theta) \equiv \lim_{r \downarrow 0} g^{(r)}(\theta)$ for $\theta < 0$. This can be obtained in the following way. From \eq{gr3} and \mylem{boundary-p},
\begin{align*}
  \lim_{r \downarrow 0} g^{(r)}_{3}(\theta) & = \lim_{r \downarrow 0} \frac {1}{1 - e^{r \theta} \rho^{(r)}_{2}} e^{(\ell^{(r)}_{1} + 1)r \theta} p^{(r)}_{3,\ell^{(r)}_{1}+1} \nonumber\\
  & = \lim_{r \downarrow 0} \frac {r}{1 - (1+r\theta + o(r\theta)) (1 - r \beta_{2} + o(r))} e^{\ell^{(r)}_{1} \theta} \frac 1r p^{(r)}_{3,\ell^{(r)}_{1}+1} \nonumber\\
  & = \frac {e^{\ell_{1} \theta}} {\beta_{2} - \theta} \frac {\lambda_{1} + \lambda_{2}} {\lambda_{2}} \lim_{r \downarrow 0} \frac 1r p^{(r)}_{1,\ell^{(r)}_{1}}  \nonumber\\
  & = \frac {e^{\ell_{1} \theta} \beta_{2}} {\beta_{2} - \theta} \frac {\beta_{1}} {\beta_{1} + \beta_{2} (e^{\beta_{1} \ell_{1}} - 1)},
\end{align*}
while, from \eq{gr1-2} and \mylem{boundary-p},
\begin{align*}
  \lim_{r \downarrow 0} g^{(r)}_{1}(\theta) & = \lim_{r \downarrow 0} \left(f^{(r)}_{\rho_{1}}(\theta) + (e^{r\theta} \rho^{(r)}_{1})^{\ell^{(r)}_{1}}\right) p^{(r)}_{1,0} = \lim_{r \downarrow 0} \frac {1 - (e^{r \theta} \rho^{(r)}_{1})^{\ell^{(r)}_{1}+1}} {1 - e^{r \theta} \rho^{(r)}_{1}} p^{(r)}_{1,0} \nonumber\\
  & = \frac {1 - e^{\ell_{1} (\theta - \beta_{1})}} {\beta_{1} - \theta} \frac {\lambda_{1} + \lambda_{2}} {\lambda_{2}} e^{\beta_{1} \ell_{1}} \widetilde{p}_{1,\ell_{1}} \nonumber\\
  & = \frac {\beta_{1}} {e^{\beta_{1} \ell_{1}} - 1} \frac {e^{\beta_{1} \ell_{1}} - e^{\ell_{1} \theta}} {\beta_{1} - \theta} \frac {\beta_{2} (e^{\beta_{1} \ell_{1}} - 1)} {\beta_{1} + \beta_{2} (e^{\beta_{1} \ell_{1}} - 1)}.
\end{align*}
Further, $\lim_{r \downarrow 0} g^{(r)}_{2}(\theta) = 0$ by \eq{gr20-limit}. Hence, we have, for $\theta \le 0$ satisfying $\theta \not= \beta_{1}$,
\begin{align*}
  \lim_{r \downarrow 0} g^{(r)}(\theta) & = \lim_{r \downarrow 0} g^{(r)}_{1}(\theta) + \lim_{r \downarrow 0} g^{(r)}_{3}(\theta) \nonumber\\
  & = \frac {\beta_{1}} {e^{\beta_{1} \ell_{1}} - 1} \frac {e^{\beta_{1} \ell_{1}} - e^{\ell_{1} \theta}} {\beta_{1} - \theta} \frac {\beta_{2} (e^{\beta_{1} \ell_{1}} - 1)} {\beta_{1} + \beta_{2} (e^{\beta_{1} \ell_{1}} - 1)} + \frac {e^{\ell^{(r)}_{1} \theta} \beta_{2}} {\beta_{2} - \theta} \frac {\beta_{1}} {\beta_{1} + \beta_{2} (e^{\beta_{1} \ell_{1}} - 1)}.
\end{align*}
This completes the proof of \mypro{2LQ-exponential}.

\subsection{Proofs of \mylem{2LQ-basic-2}}
\label{myapp:2LQ-basic-2}

We first prove \eq{2LQ-Te 1} and \eq{2LQ-Ts 1}. Apply $f(z,\vc{x}) = x_{1}^{k+1}$ to \eq{BAR-1}, then
\begin{align*}
  (k+1) \dd{E}[(R^{(r)}_{e})^{k}] & = \alpha^{(r)}_{e} \dd{E}[(T^{(r)}_{e,1})^{k+1})] \dd{P}^{(r)}_{e}[L^{(r)}(0) \le \ell^{(r)}_{1}]\\
  & \quad + \alpha^{(r)}_{e} \dd{E}[(T^{(r)}_{e,2})^{k+1})] \dd{P}^{(r)}_{e}[L^{(r)}(0) > \ell^{(r)}_{1}],
\end{align*}
where \eq{BAR-1} is applicable since the right-hand side of this equation is finite. Hence, \eq{2LQ-Te 1} is obtained. Similarly, we have \eq{2LQ-Ts 1} applying $f(z,\vc{x}) = x_{2}^{k+1}$ to \eq{BAR-1} because
\begin{align*}
  f'(X^{(r)}(t)) = - (k+1) (R^{(r)}_{s}(t))^{k} (1 - 1(L^{(r)}(t) = 0)),
\end{align*}
and $R^{(r)}(t)$ has the same distribution as $T^{(r)}_{s}$ when $L^{(r)}(t) = 0$.
We next prove \eq{2LQ-Tu 1}. For this, we use the following inequality,
\begin{align}
\label{eq:2LQ-x-inequality 1}
  |(x \wedge 1/r)^{k} - x^{k}| & \; = \; x^{k} 1(x > 1/r) < r x^{k+1} 1(x > 1/r)  \nonumber\\
  & < r^{2} x^{k+2} 1(x > 1/r), \qquad x \ge 0, k \ge 1.
\end{align}
Hence, the last inequality in \eq{2LQ-x-inequality 1} for $k=1$ and the uniform boundedness of $\dd{E}[(T^{(r)}_{u})^{3}$ yield
\begin{align}
\label{eq:2LQ-Re-bound 1}
 & |\dd{E}[\widehat{T}^{(r)}_{u}] - \dd{E}[T^{(r)}_{u}]| \le r^{2} \dd{E}[(T^{(r)}_{u})^{3} 1(T^{(r)}_{u} > 1/r)] = o(r^{2}).
\end{align}
On the other hand, the second inequality in \eq{2LQ-x-inequality 1} yields
\begin{align}
\label{eq:2LQ-Re-bound 2}
 & |\dd{E}[(\widehat{T}^{(r)}_{u})^{2}] - \dd{E}[(T^{(r)}_{u})^{2}]| \le r \dd{E}[(T^{(r)}_{u})^{2} 1(T^{(r)}_{u} > 1/r)] = o(r).
\end{align}
Thus, \eq{2LQ-Tu 1} is proved for $k=1,2$. We next prove \eq{2LQ-Rv 1} for $v = e$. Since $\dd{E}[(R^{(r)}_{e})^{k}]$ is uniformly bounded for $k=1,2$ by \eq{2LQ-Ts 1}, we can prove \eq{2LQ-Rv 1} by the same argument as the proof of \eq{2LQ-Tu 1}, but the order of $r$ is one less because of \eq{2LQ-Ts 1}. We omit the proof for $v = s$ because it is similar to the case of $v=e$ except for \eq{2LQ-Te 1} to be used instead of \eq{2LQ-Ts 1}.

\subsection{Proofs of Lemmas \mylemt{2LQ-asymp 1} and \mylemt{2LQ-asymp 2}}
\label{myapp:2LQ-asymp}

\begin{proof}[Proof of \mylem{2LQ-asymp 1}]
For a $(n+1)$-times continuously differentiable function $f$, we have, from Taylor expansion (e.g. see Theorem 19.9 of \cite{Rudi1987}), 
\begin{align}
\label{eq:Taylor 1}
  \left|f(x) - \sum_{k=0}^{n} \frac 1{k!} f^{(k)}(0) x^{k}\right| & = \frac 1{n!} \left|\int_{0}^{x} (x-y)^{n} f^{(n+1)}(y) dy\right| \nonumber\\
  & \le \frac 1{(n+1)!} |x|^{n+1} \max_{|u| \le |x|} |f^{(n+1)}(u)|, \quad x \in \dd{R}, n \ge 0.
\end{align}
Applying $f(x) = e^{-x}$ to this inequality, we have $|e^{-x} - 1 + x| \le \frac 12 |x|^{2} e^{|x|}$. Hence, for each fixed $\theta \in \dd{R}$, as $r \downarrow 0$
\begin{align*}
 & e^{r \theta \ell^{(r)}_{1}} = e^{\theta (\ell_{1} + o(r))} = e^{\theta \ell_{1}} (1+\theta o(r)) + o(\theta o(r)) = e^{\theta \ell_{1}} (1+ o(\theta r)),\\
 & e^{r \theta (\ell^{(r)}_{1}+1)} = e^{\theta r \ell^{(r)}_{1}} e^{r \theta} = e^{\theta \ell_{1}} (1+o(\theta r)) (1 + \theta r + o((\theta r)^{2})) = e^{\theta \ell_{1}} (1+ \theta r+ o(\theta r)).
\end{align*}
Thus, \eq{2LQ-exp 1} is obtained. \eq{2LQ-exp 2} and \eq{2LQ-exp 3} are immediate from these asymptotic expansions.
\end{proof}

\begin{proof}[Proof of \mylem{2LQ-asymp 2}]
Since $\zeta^{(r)}(r\theta) \widehat{R}^{(r)}_{s-}$ and $\eta^{(r)}_{i}(r\theta) \widehat{R}^{(r)}_{e}$ are uniformly bounded in $r < a$ for a sufficiently small $a>0$ and vanish as $r \downarrow 0$, it follows from \mylem{2LQ-eta-zeta 1} and \eq{Taylor 1} that
\begin{align*}
 & e^{-\zeta^{(r)}(r\theta) \widehat{R}^{(r)}_{s-}}  \nonumber\\
 & \quad = 1 - \zeta^{(r)}(r\theta) \widehat{R}^{(r)}_{s-} + \frac 12 (\zeta^{(r)}(r\theta) \widehat{R}^{(r)}_{s-})^{2} + O \big(|\zeta^{(r)}(r\theta) \widehat{R}^{(r)}_{s-}|^{3} \max_{r |\theta| < a} e^{|\zeta^{(r)}(r\theta) \widehat{R}^{(r)}_{s-}|} \big) \nonumber\\
 & \quad = 1 - \left(- \mu^{(r)} r\theta + \frac 12 (\mu^{(r)})^{3} (\sigma^{(r)}_{s})^{2} (r\theta)^{2} + o((r\theta)^{2}) \right) \widehat{R}^{(r)}_{s-}  \nonumber\\
 & \qquad + \frac 12 \left(- \mu^{(r)} r\theta + \frac 12 (\mu^{(r)})^{3} (\sigma^{(r)}_{s})^{2} (r\theta)^{2} + o((r\theta)^{2}) \right)^{2} (\widehat{R}^{(r)}_{s-})^{2} \nonumber\\
 & \qquad + (\widehat{R}^{(r)}_{s-})^{3} O \big((r \theta)^{3} \max_{r |\theta| < a} e^{|\zeta^{(r)}(r\theta) \widehat{R}^{(r)}_{s-}|} \big) \nonumber\\
 & \quad = 1 + r\theta (\mu^{(r)} \widehat{R}^{(r)}_{s-}) - \frac 12 (r\theta)^{2} \left( (\mu^{(r)} \sigma^{(r)}_{s})^{2} (\mu^{(r)} \widehat{R}^{(r)}_{s-}) - (\mu^{(r)} \widehat{R}^{(r)}_{s-})^{2} \right) + S_{3}(\widehat{R}^{(r)}_{s-}) o((r\theta)^{2}),
\end{align*}
where recall that $S_{3}(\widehat{R}^{(r)}_{s-}) = \widehat{R}^{(r)}_{s-}+ (\widehat{R}^{(r)}_{s-})^{2} + (\widehat{R}^{(r)}_{s-})^{3}$. This proves \eq{2LQ-exp-zeta 1}, and \eq{2LQ-exp-zeta 2} and \eq{2LQ-exp-eta 2} are similarly proved.
\end{proof}

\subsection{Proof of \mylem{2LQ-Pell-lim-1}}
\label{myapp:2LQ-lem51}

We first assume $\lambda_{1} \not= \lambda_{2}$, and prove \eq{2LQ-Re-Es-lim-1} for $i=1$. Consider the BAR \eq{2LQ-BAR1} for $\theta_{2} = \theta_{1} = \theta \le 0$. From \eq{2LQ-D 1}, we have
\begin{align}
\label{eq:2LQ-D11-1}
   & D^{(r)}(\theta,\theta) = \alpha^{(r)}_{e} e^{r \theta (\ell^{(r)}_{1} + 1)} \dd{E}^{(r)}_{s}[(e^{-\eta^{(r)}_{1}(r\theta) \widehat{R}^{(r)}_{e}}  - e^{-\eta^{(r)}_{2}(r\theta) \widehat{R}^{(r)}_{e}}) 1(L^{(r)}(0) = \ell^{(r)}_{1})].
\end{align}zz
Applying \eq{2LQ-exp 1}, \eq{2LQ-exp-eta 2} and \mycor{2LQ-Re-Rs 1} to this equation, we have
\begin{align*}
  D^{(r)}(\theta,\theta) & = \alpha^{(r)}_{e} e^{\theta \ell_{1}} (1+ r\theta + o(r\theta)) \dd{E}^{(r)}_{s} \big[(r \theta (\lambda^{(r)}_{2} - \lambda^{(r)}_{1}) \widehat{R}^{(r)}_{e} \nonumber\\
 & \quad +  2^{-1} r^{2} \theta^{2} \{(\lambda^{(r)}_{2} \sigma^{(r)}_{e,2})^{2} \lambda^{(r)}_{2} - (\lambda^{(r)}_{1} \sigma^{(r)}_{e,1})^{2} \lambda^{(r)}_{1}\} \widehat{R}^{(r)}_{e} \nonumber\\
 & \quad - 2^{-1} r^{2} \theta^{2} \{(\lambda^{(r)}_{2})^{2} - (\lambda^{(r)}_{1})^{2}\} (\widehat{R}^{(r)}_{e})^{2}) 1(L^{(r)}(0) = \ell^{(r)}_{1}) \big] + o((r \theta)^{2}), \nonumber\\
 & = \alpha^{(r)}_{e} e^{\theta \ell_{1}} \big\{ r \theta (\lambda^{(r)}_{2} - \lambda^{(r)}_{1}) \dd{E}^{(r)}_{s} \big[\widehat{R}^{(r)}_{e}  1(L^{(r)}(0) = \ell^{(r)}_{1}) \big] + (r \theta)^{2} (\sr{E}^{(r)}_{1} + \sr{E}^{(r)}_{2})\big\} \nonumber\\
 & \quad + o((r \theta)^{2}).
\end{align*}
Since $\sr{E}^{(r)}_{1} + \sr{E}^{(r)}_{2}$ is uniformly bounded in $r \in (0,1]$, this implies that
\begin{align}
\label{eq:D11-2}
 D^{(r)}(\theta,\theta) & = r \theta \alpha^{(r)}_{e} e^{\theta \ell_{1}} ( \lambda^{(r)}_{2} - \lambda^{(r)}_{1}) \dd{E}^{(r)}_{s} \big[ \widehat{R}^{(r)}_{e} 1(L^{(r)}(0) = \ell^{(r)}_{1}) \big] + O((r\theta)^{2}).
\end{align}
Substitute this formula into the BAR \eq{2LQ-BAR1} for $\theta_{2} = \theta_{1} = \theta \le 0$, which is given by
\begin{align}
\label{eq:2LQ-BAR2}
 & \frac 12 \sum_{i=1,2} c_{i} \sigma_{i}^{2} \big((\beta_{i} - \theta) \mu \theta r^{2} + o((\theta r)^{2})\big) \psi^{(r)}_{i}(\theta) \nonumber\\
 & \quad = c_{1} \left(\mu r\theta - \mu^{3} \sigma_{s}^{2} r^{2} \theta^{2}/2 + o((\theta r)^{2})\right) \dd{E}[ 1(L^{(r)} = 0) g^{(r)}_{1,r \theta}(R^{(r)})] + D^{(r)}(\theta,\theta).
\end{align}
Then, picking up the terms of order $r \theta$, we have
\begin{align*}
 & c_{1} \mu \dd{E}[1(L^{(r)} = 0) g^{(r)}_{1,r\theta}(R^{(r)})]+ \alpha^{(r)}_{e} e^{\theta \ell_{1}} (\lambda_{2} - \lambda_{1}) \dd{E}_{s}[\widehat{R}^{(r)}_{e} 1(L^{(r)}(0) = \ell^{(r)}_{1})] = O(r\theta).
\end{align*}
Since $\dd{E}[ 1(L^{(r)} = 0) g^{(r)}_{1,r \theta}(R^{(r)})] = O(r)$ by \mycor{2LQ-limit 1} and \mylem{2LQ-L0-1}, this yields
\begin{align}
\label{eq:2LQ-lim-L0-1}
  (\lambda_{2} - \lambda_{1}) \dd{E}_{s}[\widehat{R}^{(r)}_{e} 1(L^{(r)}(0) = \ell^{(r)}_{1})] = O(r).
\end{align}
Since $\lambda_{1} \not= \lambda_{2}$, \eq{2LQ-lim-L0-1} implies \eq{2LQ-Re-Es-lim-1} for $i=1$. To prove \eq{2LQ-Re-Es-lim-1} for $i=2$, we derive the following formula by Schwartz's inequality.
\begin{align}
\label{eq:2LQ-i=2}
& \big\{\dd{E}^{(r)}_{s}[(\widehat{R}^{(r)}_{e})^{2} 1(L^{(r)}(0) = \ell^{(r)}_{1}]\big\}^{2} = \big\{\dd{E}^{(r)}_{s}[(\widehat{R}^{(r)}_{e})^{3/2} (\widehat{R}^{(r)}_{e})^{1/2} 1^{2}(L^{(r)}(0) = \ell^{(r)}_{1})] \big\}^{2} \nonumber\\
 & \quad \le \dd{E}^{(r)}_{s}[(\widehat{R}^{(r)}_{e})^{3} 1(L^{(r)}(0) = \ell^{(r)}_{1})] \dd{E}^{(r)}_{s}[\widehat{R}^{(r)}_{e} 1(L^{(r)}(0) = \ell^{(r)}_{1})].
\end{align}
Since $\dd{E}^{(r)}_{s}[(\widehat{R}^{(r)}_{e})^{3} 1(L^{(r)}(0) = \ell^{(r)}_{1})]$ is uniformly bounded in $r$ by \mycor{2LQ-Re-Rs 1}, \eq{2LQ-Re-Es-lim-1} for $i=1$ implies that for $i=2$. Thus, \eq{2LQ-Re-Es-lim-1} is proved.

We next prove that \eq{2LQ-Pell-lim-1} implies \eq{2LQ-Re-Es-lim-1}. For this proof, we use the BAR \eq{BAR-1} for $f(z,x_{1},x_{2}) = (x_{1} \wedge 1/r) (x_{2} \wedge 1/r) 1(z = \ell^{(r)}_{0})$, then we have
\begin{align}
\label{eq:2LQ-ReRs-1}
 & \dd{E}[R^{(r)}_{e} 1(R^{(r)}_{e} \le 1/r, L^{(r)} = \ell^{(r)}_{0})] + \dd{E}[R^{(r)}_{s} 1(R^{(r)}_{s} \le 1/r, L^{(r)} = \ell^{(r)}_{0})] \nonumber\\
 & \quad = \alpha^{(r)}_{e} \dd{E}[\widehat{T}^{(r)}_{e,2}] \dd{E}^{(r)}_{e}[\widehat{R}^{(r)}_{s-} 1(L^{(r)}(0) = \ell^{(r)}_{0})] + \alpha^{(r)}_{e} \dd{E}[\widehat{T}^{(r)}_{s}] \dd{E}^{(r)}_{s}[\widehat{R}^{(r)}_{e} 1(L^{(r)}(0) = \ell^{(r)}_{0})].
\end{align}
Then, we can see that the left-hand side of this equation vanishes as $r \downarrow 0$ by \eq{2LQ-Pell-lim-1} because, by Schwartz's inequality,
\begin{align*}
  \big(\dd{E}[R^{(r)}_{v}1(R^{(r)}_{v} \le 1/r, L^{(r)} = \ell^{(r)}_{0})]\big)^{2} \le \dd{E}[(R^{(r)}_{v})^{2}] \dd{P}[L^{(r)} = \ell^{(r)}_{0}], \qquad v= e,s.
\end{align*}
Hence, we have \eq{2LQ-Re-Es-lim-1} for $i=1$. It for $i=2$ follows from \eq{2LQ-i=2} as we have argued.

\subsection{Proof of \mylem{2LQ-Pell-lim-2}}
\label{myapp:2LQ-lem52}

In the proof of \mypro{2LQ-3f}, under the assumption $\lambda_{1} \not= \lambda_{2}$, \eq{2LQ-E12} is obtained applying \eq{2LQ-Re-Es-lim-1} to \eq{2LQ-E1+2}. Obviously, if \eq{2LQ-Re-Es-lim-1} holds, then this proof is also valid without any extra assumption including $\lambda_{1} \not= \lambda_{2}$. Hence, the lemma is proved.

\subsection*{Acknowledgement} This study is motivated by the BAR approach, coined by Jim Dai (e.g., see \cite{BravDaiMiya2024}). I am grateful to him for his grand design about it, and to Rami Atar for providing his recent papers \cite{AtarCastReim2024a,AtarWola2024}. I also benefit from Evsey Morozov for his comments on this paper.


\end{document}